

\newif\ifsiam


\siamfalse

\newif\ifarxiv
\ifsiam \arxivfalse \else \arxivtrue \fi


\ifsiam
\documentclass[review,onefignum,onetabnum]{siamonline220329}
\fi


\ifarxiv
\documentclass[english]{myarticle}
\fi


\usepackage{lipsum}
\usepackage{amsfonts}
\usepackage{graphicx}
\usepackage{epstopdf}
\usepackage{algorithmic}
\usepackage{amsmath}
\ifpdf
  \DeclareGraphicsExtensions{.eps,.pdf,.png,.jpg}
\else
  \DeclareGraphicsExtensions{.eps}
\fi

\usepackage{enumitem}
\setlist[enumerate]{leftmargin=.5in}
\setlist[itemize]{leftmargin=.5in}


\ifsiam
\newsiamremark{remark}{Remark}
\newsiamremark{hypothesis}{Hypothesis}
\newsiamthm{claim}{Claim}
\fi

\ifarxiv
\newtheorem{hypothesis}{Hypothesis}
\fi

\crefname{hypothesis}{Hypothesis}{Hypotheses}

\ifsiam
\headers{Periodic Center Manifolds for Delay Equations in the Light of Suns and Stars}{B. Lentjes, L. Spek, M. M. Bosschaert, Yu. A. Kuznetsov}
\fi

\title{Periodic Center Manifolds for DDEs in the Light of Suns and Stars}

\author{Bram Lentjes\thanks{Department of Mathematics, Hasselt University, Diepenbeek Campus, 3590 Diepenbeek, Belgium \email{(bram.lentjes@uhasselt.be)}.}
\and Len Spek\thanks{Department of Applied
Mathematics, University of Twente,
7500 AE Enschede, The Netherlands \email{(l.spek@utwente.nl)}}
\and Maikel M. Bosschaert\thanks{Department of Mathematics, Hasselt University, Diepenbeek Campus, 3590 Diepenbeek, Belgium \email{(maikel.bosschaert@uhasselt.be)}.}
\and Yuri A. Kuznetsov\thanks{Department of Mathematics, Utrecht University, 3508 TA Utrecht, The Netherlands and Department of Applied
Mathematics, University of Twente,
7500 AE Enschede, The Netherlands \email{(i.a.kouznetsov@uu.nl)}.}}



\usepackage{amsopn}

\DeclareMathOperator{\NBV}{NBV}

\DeclareMathOperator{\Lip}{Lip}

\DeclareMathOperator{\loc}{loc}
\DeclareMathOperator{\BC}{BC}

\DeclareMathOperator{\ran}{ran}

\DeclareMathOperator{\evo}{ev}
\DeclareMathOperator{\minn}{min}
\DeclareMathOperator{\maxx}{max}
\newcommand{\w}{\mbox{w$^\star$-}}

\usepackage{mathtools,thmtools}


\ifpdf
\hypersetup{
  pdftitle={Periodic Center Manifolds for DDEs in the Light of Suns and Stars},
  pdfauthor={B. Lentjes, L. Spek, M. M. Bosschaert, and Yu. A. Kuznetsov}
}
\fi


\begin{document}

\maketitle

\begin{abstract}
In this paper we prove the existence of a periodic smooth finite-dimensional center manifold near a nonhyperbolic cycle in classical delay differential equations by using the Lyapunov-Perron method. The results are based on the rigorous functional analytic perturbation framework for dual semigroups (sun-star calculus). The generality of the dual perturbation framework shows that the results extend to a much broader class of evolution equations.
\end{abstract}

\begin{keywords}
delay differential equations, dual perturbation theory, sun-star calculus, center manifold theorem, normal forms, nonhyperbolic cycles
\end{keywords}

\begin{MSCcodes}
34C25, 34K19, 37L10 
\end{MSCcodes}
\begin{sloppypar}

\markboth{B. Lentjes, L. Spek, M.M. Bosschaert, and Yu.A. Kuznetsov}{PERIODIC CENTER MANIFOLDS AND NORMAL FORMS FOR DDES}

\section{Introduction} \label{sec:introduction}
Bifurcation theory allows us to analyze the behavior of complicated high dimensional nonlinear dynami\-cal systems near bifurcations by reducing the system to a low dimensional invariant manifold, called the center manifold. Using normal form theory, the dynamics on the center manifold can be described by a simple canonical equation called the normal form. These bifurcations and normal forms can be categorized, and their properties can be understood in terms of certain coefficients of the normal form, see \cite{Kuznetsov2004} for more details. Methods to compute these normal form coefficients have been implemented in software  like \verb|MatCont| \cite{Matcont} and \verb|DDE-BifTool| \cite{Engelborghs2002,Sieber2014} to study various classes of dynamical systems.

For bifurcations of limit cycles in continuous-time dynamical systems, there are three generic codimension one bifurcations: fold (or limit point), period-doubling (or flip) and Neimark-Sacker (or torus) bifurcation. These bifurcations are well understood for ordinary differentials equations (ODEs) \cite{Iooss1988,Iooss1999,Kuznetsov2005,Witte2013}, but for delay differential equations (DDEs) the theory is still lacking. To understand these bifurcations, one should first prove the existence of a center manifold on which one can study the dynamics near a nonhyperbolic cycle via a normal form reduction.

The aim of this paper is to show for classical DDEs that such a center manifold near a nonhyperbolic cycle does exist and is sufficiently smooth. The method of the proof is based on a well-defined variation-of-constants formula in the framework of dual semigroups. In two upcoming papers, we will derive periodic normal forms for bifurcations of limit cycles in classical DDEs and present explicit computational formulas for the critical normal form coefficients of all codimension one bifurcations of limit cycles, completely avoiding Poincar\'{e} maps. Finally, we plan to implement the obtained computational formulas into a software package like \verb|DDE-BifTool|.

\subsection{Background}
Consider a {\it classical delay differential equation} (DDE) 
\begin{equation} \label{eq:IntroDDE}
    \dot{x}(t) = F(x_t), \quad t \geq 0,
\end{equation}
where $x(t) \in \mathbb{R}^n$ and 
\begin{equation*} 
    x_t(\theta) := x(t+\theta), \quad \theta \in [-h,0],
\end{equation*}
represents the \emph{history} at time $t$ of the unknown $x$, and $0<h<\infty$ denotes the upper bound of (finite) delays. The $\mathbb{R}^n$-valued smooth operator $F$ is defined on the Banach space $X := C([-h,0],\mathbb{R}^n)$ consisting of $\mathbb{R}^n$-valued continuous functions on the compact interval $[-h,0]$, endowed with the supremum norm.

Using the perturbation framework of dual semigroups, called sun-star calculus, developed in \cite{Clement1988,Clement1987,Clement1989,Clement1989a,Diekmann1991}, the existence of a smooth finite-dimensional center manifold near a nonhyperbolic equilibrium of \eqref{eq:IntroDDE} can be rigorously established using the Lyapunov-Perron method, see \cite{Diekmann1995,Diekmann1991CM} for the critical center manifold and \cite{Bosschaert2020} for the parameter-dependent center manifold.  Furthermore, in \cite{Bosschaert2020,Diekmann1995,Janssens2010}, the authors derive explicit computational formulas for the normal form coefficients for all generic codimension one and two bifurcations for equilibria. These have been implemented in the \verb|MATLAB| package \verb|DDE-BifTool|. The question arises if this whole construction can be repeated for a nonhyperbolic periodic orbit (cycle) $\Gamma := \{ \gamma_t \in X \ : \ t \in \mathbb{R} \}$, where $\gamma : \mathbb{R} \to \mathbb{R}^n$ is a $T$-periodic solution of \eqref{eq:IntroDDE}.

In this paper, we  build a promising framework to generalize the described construction towards nonhyperbolic cycles in DDEs. Therefore, our first goal is to prove the existence of a smooth finite-dimensional periodic center manifold in a neighborhood of $\Gamma$ using the Lyapunov-Perron method, but now in a time-dependent setting. To achieve this, we prove the existence of a smooth finite-dimensional periodic center manifold $\mathcal{W}_{\loc}^c$ in the neighborhood of the origin of the time-dependent translated system
\begin{equation} \label{eq:IntroTDDE}
    \dot{y}(t) = L(t)y_t + G(t,y_t),
\end{equation}
where $x = \gamma + y$, $L(t) := DF(\gamma_t)$ denotes the Fr\'echet derivative of $F$ evaluated at $\gamma_t$ and $G(t,\cdot) := F(\gamma_t + \cdot) - F(\gamma_t) - L(t)$ consists of solely nonlinear terms. Note that both $L$ and $G$ are $T$-periodic in the variable $t$. Afterwards, we translate the manifold $\mathcal{W}_{\loc}^c$, defined near the origin of \eqref{eq:IntroTDDE}, back towards the original cycle $\Gamma$. Hence, we obtain a smooth finite-dimensional periodic center manifold $\mathcal{W}_{\loc}^c(\Gamma)$ defined near the nonhyperbolic cycle $\Gamma$.

The first attempt to use a periodic center manifold for classical DDEs was made in the very interesting paper \cite{Szalai2010} by Szalai and St\'ep\'an, who heuristically applied the Lyapunov-Perron method for equilibria from \cite{Diekmann1995} towards the periodic setting using sun-star calculus. However, no proof of the existence of such a center manifold was given, and in addition their results were only applicable when the period of the cycle $T$ precisely equals the delay $h$, which is a major restriction. 

The existence of a finite-dimensional periodic center manifold for \eqref{eq:IntroTDDE} was recently established in \cite{Church2018} by Church and Liu using the Lyapunov-Perron method for a specific class of delay equations, namely impulsive DDEs. These delay equations have a countable number of discontinuities in their solutions, and therefore it is in general not possible for the obtained center manifold to be smooth in the time direction. However, this smoothness will be crucial to derive in two upcoming articles the periodic normal forms and computational formulas for the critical normal form coefficients. Therefore, we will apply in this paper the Lyapunov-Perron method, along the lines of \cite{Diekmann1995,Church2018}, to prove the existence of a smooth finite-dimensional periodic center manifold for \eqref{eq:IntroDDE} in the sun-star calculus setting, by utilizing a well-defined variation-of-constants formula. The framework used by Church and Liu is a rigorous adaption of the formal adjoint approach \cite{Hale1993}, where they work with the space of right-continuous regulated functions, see \cite{Church2018,Church2019} for more information. Furthermore, as already remarked in \cite[Section 8.2]{Hale1993}, the traditional bilinear form used in the formal adjoint approach is not applicable to study linear behavior of solutions near periodic orbits. Therefore, it seems difficult to derive the critical normal form coefficients for codimension one bifurcations of limit cycles using the formal adjoint approach. However, Church and Liu obtained such computational formulas, but employing the Poincar\'e maps \cite{Church2019}. When one is interested in studying numerically the local behavior of solutions in the vicinity of $\Gamma$ via the Poincar\'e map, it is necessary to compute (higher order) derivatives of this map \cite{Kuznetsov2005}, which already does not look very promising for ODEs, let alone (impulsive) DDEs.

Furthermore, we mention the work by Hupkes and Verduyn Lunel on the existence and smoothness of center manifolds near equilibria \cite{Hupkes2006} and periodic orbits \cite{Hupkes2008} for so-called functional differential equations of mixed type (MFDEs). These differential equations involve retarded but also advanced arguments and impose in general ill-defined initial value problems. As a consequence, they can not apply directly the Lyapunov-Perron method on a variation-of-constants formula and therefore use other methods, like Laplace transforms and Fourier analysis.

\subsection{Overview}
The paper is organized as follows. In \Cref{sec:dual perturbation theory} we review and extend the theory of dual semigroups (sun-star calculus) with time-dependent (nonlinear) perturbations, both on an abstract level and applied to the analysis of time-dependent (nonlinear) delay differential equations. 

In \Cref{sec: Existence} we use the theory from the previous section to prove the existence of a smooth finite-dimensional periodic center manifold for \eqref{eq:IntroTDDE} near the origin, see \Cref{cor:CMT DDE} for the final result. Due to the dual perturbation framework, the proven results apply to a way more general class of evolution equations, as for example renewal equations \cite{Diekmann2008} and systems consisting of infinite delay \cite{Diekmann2012}, see \Cref{thm:LCMT} for the general result. Additional material on spectral decompositions can be found in \Cref{appendix: spectral} and some technical proofs on increasing smoothness and periodicity are relegated to \Cref{appendix: smoothness and periodicty}. To apply the general theory to classical DDEs, we also use the material presented in \Cref{appendix: variation constants}.

\section{Dual perturbation theory} \label{sec:dual perturbation theory}
We start by briefly recalling the general elements of (time-dependent) dual perturbation theory that are useful to study classical DDEs as dynamical systems. Standard references for this entire section are the book \cite{Diekmann1995} together with the article \cite{Clement1988} on time-dependent perturbations. All unreferenced claims relating to basic properties of time-dependent perturbations of delays equations can be found here.

\subsection{Duality structure} \label{subsec: duality structure}
Let $T_0 := \{T_0(t)\}_{t \geq 0}$ be a $\mathcal{C}_0$-semigroup of bounded linear operators defined on a real or complex Banach space $X$ that has $A_0$ as (infinitesimal) generator with domain $\mathcal{D}(A_0)$. Then the \emph{dual semigroup} $T_0^\star := \{T_0^\star(t) \}_{t \geq 0}$, where $T_0^\star(t) : X^\star \to X^\star$ is the adjoint of $T_0(t)$, is a semigroup on the topological dual space $X^\star$ of $X$. We denote the duality paring between $X$ and $X^\star$ as
\begin{equation*}
    \langle x^\star,x \rangle := x^\star(x), \quad \forall x^\star \in X^\star, \ x \in X.
\end{equation*}
If $X$ is not reflexive, then $T_0^\star$ is in general only weak$^\star$ continuous on $X^\star$. This is also visible on the generator level, as the adjoint $A_0^\star$ of $A_0$ is only the weak$^\star$ generator of $T_0^\star$ and has in general a non-dense domain. The maximal subspace of strong continuity
\begin{equation*}
    X^\odot := \{ x^\star \in X^\star : t \mapsto T_0^\star(t) x^\star \mbox{ is norm continuous on } [0,\infty) \}
\end{equation*}
is a norm closed $T_0^\star(t)$-invariant weak$^\star$ dense subspace of $X^\star$ and we have the characterization
\begin{equation} \label{eq:Xsuncharac}
    X^\odot = \overline{\mathcal{D}(A_0^\star)},
\end{equation}
where the bar denotes the norm closure in $X^\star$. The restriction of $T_0^\star$ to $X^\odot$ is a $\mathcal{C}_0$-semigroup on $X^\odot$ and its generator $A_0^\odot$ is the part of $A_0^\odot$ in $X^\odot$
\begin{equation*}
    \mathcal{D}(A_0^{\odot}) = \{ x^{\odot} \in \mathcal{D}(A_0^{\star}) \ : \ A_0^{\star}x^{\odot} \in X^{\odot} \}, \quad A_0^{\odot}x^{\odot} = A_0^{\star}x^{\odot}.
\end{equation*}
We have at this moment a $\mathcal{C}_0$-semigroup $T_0^\odot$ with generator $A_0^\odot$ on the Banach space $X^\odot$, which are precisely the ingredients we started with. Repeating the construction once more, we obtain on the dual space $X^{\odot \star}$ the weak$^\star$ continuous adjoint semigroup $T_0^{\odot \star}$ with weak$^\star$ generator $A_0^{\odot \star}$. The restriction of $T_0^{\odot \star}$ to the maximal subspace of strong continuity $X^{\odot \odot}$ gives a $\mathcal{C}_0$-semigroup $T_0^{\odot \odot}$ with generator $A_0^{\odot \odot}$ that is the part of $A_0^{\odot \star}$ in $X^{\odot \odot}$. The canonical continuous embedding $j : X \to X^{\odot \star}$ defined by
\begin{equation} \label{eq: j}
    \langle jx, x^\odot \rangle := \langle x^\odot, x \rangle, \quad \forall x \in X, \ x^\odot \in X^\odot,
\end{equation}
maps $X$ into $X^{\odot \odot}$. If $j$ maps $X$ onto $X^{\odot \odot}$ then $X$ is called $\odot$-\emph{reflexive} with respect to $T_0$. $\odot$-reflexivity with respect to $T_0$ will be assumed throughout as this is not a restriction for studying classical DDEs, see \Cref{subsec: classical DDEs}.

\subsection{Time-dependent bounded linear perturbations} \label{subsec: linear perturbation}
Let us now turn our attention to perturbations. We will show how a time-dependent perturbation is handled in the setting of dual perturbation theory. 

A \emph{time-dependent bounded linear perturbation} can be represented as a Lipschitz continuous map $B: J \to \mathcal{L}(X,X^{\odot \star})$, where $J \subseteq \mathbb{R}$ is an interval and $\mathcal{L}(X,X^{\odot \star})$ stands for the Banach space of all bounded linear operators from $X$ to $X^{\odot \star}$, equipped with the operator norm. We will be interested in linear abstract ODEs formulated on the space $X^{\odot \star}$, where the perturbation $B$ appears in the right-hand side of the ODE. Recall from \Cref{subsec: duality structure} that we primarily work with the weak$^\star$ topology on $X^{\odot \star}$. To formulate a well-posed abstract ODE on this space, we need to work with weak$^\star$ differentiability.

Let $s \in J$ be a given \emph{starting time} and consider the initial value problem
\begin{equation} \label{eq:T-LAODEphi} \tag{T-LAODE}
\begin{dcases}
        d^\star (j\circ u)(t) = A_0^{\odot \star}ju(t) + B(t)u(t), \quad &t \geq s, \\
        u(s) = \varphi, \quad  & \varphi \in X,
\end{dcases}
\end{equation}
where $d^\star$ represents the weak$^\star$ differential operator \cite[Definition 15]{Janssens2020}. We define a \emph{subinterval} $I$ of $J$ to be an interval such that $s \in I \subseteq [s, \sup J)$. A \emph{solution of} \eqref{eq:T-LAODEphi} on a subinterval $I$ is a function $u : I \to X$ taking values in $j^{-1} \mathcal{D}(A_0^{\odot \star})$ such that $j \circ u$ is weak$^\star$ continuously differentiable on $I$ and satisfies \eqref{eq:T-LAODEphi} here. According to the literature \cite{Diekmann1995,Clement1988}, it is more convenient to study the formally integrated problem as the time-dependent linear abstract integral equation
\begin{equation} \label{eq:T-LAIEphi} \tag{T-LAIE}
    u(t) = T_0(t-s)\varphi + j^{-1}\int_s^t T_0^{\odot \star}(t-\tau) B(\tau)u(\tau) d\tau, \quad \varphi \in X,
\end{equation}
with $t \geq s$ where the integral has to be interpreted as a weak$^\star$ Riemann integral \cite[Chapter III]{Diekmann1995} and takes values in $j(X)$ under the running assumption of $\odot$-reflexivity, see \cite[Lemma 2.2]{Clement1988}. A \emph{solution of} \eqref{eq:T-LAIEphi} on a subinterval $I$ is a function $u : I \to X$ that is continuous on $I$ and satisfies \eqref{eq:T-LAIEphi} here. Let $\Omega_J := \{(t,s) \in J \times J \ : \ t \geq s \}$, then the unique solution of \eqref{eq:T-LAIEphi} on a subinterval $I$ is generated by a strongly continuous forward evolutionary system  $U := \{U(t,s)\}_{(t,s) \in \Omega_J}$ on $X$ in the sense that $u(t) = U(t,s)\varphi$ for all $t \in I$. The definition of $U$ can be found in \cite[Theorem 2.3]{Clement1988} and the definition of a (strongly continuous) forward (or backward) evolutionary system can be found in \cite[Definition 2.1]{Clement1988}. If one defines for any $s \in J$ the \emph{(generalized) generator} $A^{\odot \star}(s) : \mathcal{D}(A^{\odot \star}(s)) \to X^{\odot \star}$ as
\begin{equation*}
    A^{\odot \star}(s)jx := \w \lim_{t \downarrow s} \frac{1}{t-s}(jU(t,s)x - jx),
\end{equation*}
for any $jx$ in the \emph{(generalized) domain}
\begin{equation*}
    \mathcal{D}(A^{\odot \star}(s)) := \bigg\{ jx \in X^{\odot \star} \ : \ \w \lim_{t \downarrow s} \frac{1}{t-s}(jU(t,s)x - jx) \mbox{ exists in $X^{\odot \star}$} \bigg\},
\end{equation*}
it is known that the perturbation $B$ enters additively in the action of the generator \cite[Lemma 4.3]{Clement1988} as
\begin{equation} \label{eq:defAsunstar s}
    \mathcal{D}(A^{\odot \star}(s)) =  \mathcal{D}(A_0^{\odot \star}), \quad A^{\odot \star}(s) = A_0^{\odot \star} + B(s)j^{-1}, \quad \forall s \in J.
\end{equation}
Let us now go back to \eqref{eq:T-LAODEphi}. If the initial condition $\varphi \in j^{-1}\mathcal{D}(A_0^{\odot \star})$ then, due to the Lipschitz continuity of $B$, it is known that $u = U(\cdot,s)\varphi : I \to X$ is  the unique solution of \eqref{eq:T-LAODEphi} on a subinterval $I$, see \cite[Theorem 4.6, Theorem 4.9 and Theorem 4.14]{Clement1988}.

As we have defined $U(t,s)$ for all $(t,s) \in \Omega_J$, we are interested in the associated (sun) dual(s). It is clear that one can define $U^\star(s,t) := U(t,s)^\star \in \mathcal{L}(X^\star):=\mathcal{L}(X^\star,X^\star)$ and that $U^\star := \{U^\star(s,t)\}_{(s,t) \in \Omega_J^\star}$ forms a backward evolutionary system on $X^\star$, with $\Omega_J^\star := \{(s,t) \in J^2 : t \geq s \}$. Furthermore, the Lipschitz continuity on $B$ ensures that the restriction $U^{\odot}(s,t) := U^\star(s,t) |_{X^\odot}$ leaves $X^\odot$ invariant \cite[Theorem 5.3]{Clement1988} and, by construction, $U^\odot := \{U^{\odot}(s,t)\}_{(s,t) \in \Omega_J^\star}$ is a strongly continuous backward evolutionary system, see \cite[Theorem 5.4]{Clement1988}. This allows us to define $U^{\odot \star}(t,s) := (U^{\odot}(s,t))^\star$ and it is clear that $U^{\odot \star} := \{U^{\odot \star}(t,s) \}_{(t,s) \in \Omega_J}$ is a forward evolutionary system on $X^{\odot \star}$ that extends $U$, which was previously defined on $X$.

In the upcoming sections, we will have to deal with a particular weak$^\star$ integral involving $U^{\odot \star}$ that will be studied in the following lemma. This integral is crucial in the variation-of-constants formulation of \eqref{eq:IntroDDE}.
\begin{lemma} \label{lemma:wk*integral Usunstar}
Let $g:J\rightarrow X^{\odot \star}$ be continuous and denote the set $\{(t,r,s)\in J^3 \ : \ s\leq r\leq t\}$ by $\Theta_J$. Then the map $v(\cdot,\cdot,\cdot,g):\Theta_J \rightarrow X^{\odot \star}$ defined as the weak$^\star$ integral
\begin{equation*}
v(t,r,s,g):= \int_s^r U^{\odot \star}(t,\tau)g(\tau)d\tau,  \quad \forall (t,r,s) \in \Theta_J,
\end{equation*}
is continuous and takes values in $j(X)$. Furthermore, if $J$ is unbounded from below and $v(\cdot,\cdot,\cdot,g)$ is also bounded in norm on $\Theta_J$, then the limiting function $v(\cdot,\cdot,-\infty,g)$ converges in norm, is continuous, and its range is contained in $j(X)$.
\end{lemma}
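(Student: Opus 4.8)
The plan is to deduce both assertions from their unperturbed counterparts --- where $\odot$-reflexivity already guarantees that weak$^\star$ Riemann integrals of the form $\int T_0^{\odot\star}(\cdot)\,f(\cdot)\,d\tau$ with $f$ continuous take values in $j(X)$, cf.\ \cite[Lemma~2.2]{Clement1988} --- by exploiting two structural features of $U^{\odot\star}$: the forward evolutionary (cocycle) identity $U^{\odot\star}(t,\tau)=U^{\odot\star}(t,r)\,U^{\odot\star}(r,\tau)$, valid for $\tau\le r\le t$, and the variation-of-constants representation of mild solutions of the $B$-perturbed problem.

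\emph{Part 1.} Since $U^{\odot\star}(t,r)=(U^{\odot}(r,t))^{\star}$ is the adjoint of a bounded operator on $X^{\odot}$, it may be pulled through the weak$^\star$ integral, so the cocycle identity gives
\[
  v(t,r,s,g)=U^{\odot\star}(t,r)\,v(r,r,s,g),\qquad v(r,r,s,g)=\int_{s}^{r}U^{\odot\star}(r,\tau)\,g(\tau)\,d\tau .
\]
Because $U^{\odot\star}$ extends $U$, the factor $U^{\odot\star}(t,r)$ maps $j(X)$ into $j(X)$ (acting there as $j\,U(t,r)\,j^{-1}$), so it suffices to show $v(r,r,s,g)\in j(X)$. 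I would do this by identifying $v(r,r,\,\cdot\,,g)$ with $j$ applied to a mild solution: let $x:[s,\infty)\to X$ be the unique continuous solution of
\[
  x(\sigma)=j^{-1}\!\int_{s}^{\sigma}T_0^{\odot\star}(\sigma-\rho)\,g(\rho)\,d\rho
          +j^{-1}\!\int_{s}^{\sigma}T_0^{\odot\star}(\sigma-\rho)\,B(\rho)\,x(\rho)\,d\rho ,
\]
which is well posed because the first integral is $j(X)$-valued by the unperturbed result and depends continuously on $\sigma$ and $s$, while the second is handled by the standard contraction/Gr\"onwall argument using Lipschitz continuity of $B$. A routine extension of the variation-of-constants construction of \cite{Clement1988} so as to include the forcing $g$ --- compatible with the generator identity \eqref{eq:defAsunstar s} --- then yields $j\,x(\sigma)=\int_{s}^{\sigma}U^{\odot\star}(\sigma,\rho)\,g(\rho)\,d\rho$; in particular $v(r,r,s,g)=j\,x(r)\in j(X)$. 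Setting $w(r,s):=j^{-1}v(r,r,s,g)\in X$ we have $v(t,r,s,g)=j\bigl(U(t,r)\,w(r,s)\bigr)$, and continuity of $v$ on $\Theta_J$ follows: $(r,s)\mapsto w(r,s)$ is continuous by strong continuity of the backward evolutionary system $U^{\odot}$ together with continuity of $g$ (equivalently, continuous dependence of mild solutions on the initial time), $(t,r)\mapsto U(t,r)$ is strongly continuous and locally bounded on $X$, and $j$ is a topological isomorphism onto $j(X)=X^{\odot\odot}$.

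\emph{Part 2.} For $s'\le s\le r\le t$ the cocycle identity gives
\[
  v(t,r,s',g)-v(t,r,s,g)=U^{\odot\star}(t,r)\bigl(v(r,r,s',g)-v(r,r,s,g)\bigr)=U^{\odot\star}(t,s)\,v(s,s,s',g),
\]
and by Part~1 this increment lies in $j(X)$, so it is enough to prove that $s\mapsto v(r,r,s,g)$ is norm-Cauchy as $s\to-\infty$, uniformly for $r$ in compact subsets of $J$. Granting this, $v(t,r,-\infty,g)=U^{\odot\star}(t,r)\,v(r,r,-\infty,g)$ is a genuine norm limit lying in the norm-closed set $j(X)$, and its continuity in $(t,r)$ is inherited through the representative $w_{\infty}(r):=j^{-1}v(r,r,-\infty,g)$, which is a uniform-on-compacta limit of the continuous maps $w(r,s)$ from Part~1. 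The main obstacle is exactly this norm-Cauchy estimate: the hypothesis provides only a uniform bound $\|v(\cdot,\cdot,\cdot,g)\|\le M$ on $\Theta_J$ --- so each partial integral $v(s,s,s',g)$ stays in a fixed ball of $j(X)$ --- and one must upgrade this to smallness of the increments $U^{\odot\star}(t,s)\,v(s,s,s',g)$ as $s,s'\to-\infty$. This is where the asymptotic behaviour of the evolutionary family (its exponential dichotomy, as furnished by the spectral decomposition) must be used to make the tails of the improper integral arbitrarily small; boundedness of the partial integrals by itself would not suffice. Once this estimate is available, the stated convergence, continuity, and the inclusion of the range in $j(X)$ all follow as indicated.
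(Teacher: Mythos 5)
Your first assertion is handled correctly but by a genuinely different route than the paper's. You reduce to the case $r=t$ via the cocycle identity and then identify $v(r,r,s,g)$ with $j$ applied to the mild solution of the $T_0^{\odot\star}$-based inhomogeneous equation; this is essentially the equivalence of variation-of-constants formulas that the paper establishes only later, in \Cref{appendix: variation constants} (\Cref{prop:unique TLAIEf1}--\Cref{prop:AIEf}), and it requires all the machinery there (Lipschitz approximation of the forcing, Favard-class arguments, a density/continuity step), which you dismiss as ``routine''. The paper's own proof is more elementary and self-contained: continuity is obtained by the substitution $\sigma=t-\tau$ and a four-term splitting of $v(t_1,r_1,s_1,g)-v(t_2,r_2,s_2,g)$ over $I_1\setminus I_2$, $I_2\setminus I_1$ and $I_1\cap I_2$, using that $U^{\odot\star}(t,s)$ is uniformly continuous along paths with $t-s$ constant; membership in $j(X)$ then follows from the characterization of $j(X)=X^{\odot\odot}=\overline{\mathcal{D}(A_0^{\odot\star})}$ as the set of points of strong continuity of $h\mapsto U^{\odot\star}(t+h,t)$, applied to $v(t,r,s,g)$ by means of the continuity just proved. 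Your route buys the structural formula $v(t,r,s,g)=jU(t,r)w(r,s)$, but at the cost of importing \Cref{appendix: variation constants}; you would also need to check that the appeal to the present lemma at the end of \Cref{prop:T-f 2} is inessential (it is, since the integral there equals a difference of elements of $j(X)$), so that no circularity arises.

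The second assertion is where your proposal has a genuine gap: you reduce correctly to showing that $s\mapsto v(r,r,s,g)$ is norm-Cauchy as $s\to-\infty$, observe (correctly) that boundedness of the partial integrals cannot by itself force convergence, and then assert that the missing estimate ``must'' come from an exponential dichotomy of the evolutionary family. No such spectral structure is among the hypotheses of the lemma --- \Cref{hyp:CMT} is imposed only in \Cref{sec: Existence}, while this lemma lives at the level of a general Lipschitz perturbation $B$ --- and you never actually produce the Cauchy estimate, so the central claim of the second part is left unproven. For comparison, the paper derives the existence of the limit from weak$^\star$ Lebesgue measurability of $\tau\mapsto U^{\odot\star}(t,\tau)g(\tau)$ together with the weak$^\star$ integration lemma \cite[Lemma 9]{Janssens2020} and the stated boundedness, and gets $j(X)$-valuedness from closedness of $X^{\odot\odot}$; in the places where the lemma is actually invoked (the integrals $I_\pm$ in \Cref{prop:ketas}) the integrand moreover admits an exponentially decaying integrable majorant, which is what genuinely drives the convergence of the improper integral there. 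To complete your argument you would have to either work under such a domination hypothesis or supply the tail estimate directly; as written, your Part~2 diagnoses the difficulty but does not resolve it.
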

\begin{proof}
Let $(t_1,r_1,s_1),(t_2,r_2,s_2)\in \Theta_J$ and performing the change of variables $\sigma =t-\tau$ yields
\begin{equation*}
    v(t,r,s,g)= \int_{t-r}^{t-s} U^{\odot \star}(t,t-\sigma) g(t-\sigma) d\sigma.
\end{equation*}
Let $I_i=[t_i-r_i,t_i-s_i]$ for $i=1,2$. We can split the following difference into four integrals.
\begin{align*}
v(t_1,r_1,s_1,g)-v(t_2,r_2,s_2,g) &= \int_{I_2/I_1}
\hspace{-3.5pt}U^{\odot \star}(t_2,t_2-\sigma) g(t_2-\sigma) d\sigma\\
&-\int_{I_1/I_2} \hspace{-3.5pt} U^{\odot \star}(t_2,t_2-\sigma) g(t_2-\sigma) d\sigma\\
&+ \int_{I_1\cap I_2} \hspace{-3.5pt} (U^{\odot \star}(t_2,t_2-\sigma)-U^{\odot \star}(t_1,t_1-\sigma)) g(t_2-\sigma) d\sigma\\
&+ \int_{I_1\cap I_2} \hspace{-3.5pt} U^{\odot \star}(t_1,t_1-\sigma) (g(t_2-\sigma)-g(t_1-\sigma)) d\sigma,
\end{align*}
and using the triangle inequality, we get the following estimate 
\begin{align*}
&\|v(t_1,r_1,s_1,g)-v(t_2,r_2,s_2,g)\| \\
&\leq (|I_1/I_2|+|I_2/I_1|)\sup_{\sigma\in I_1/I_2 \cup I_2/I_1}\|U^{\odot \star}(t_2,t_2-\sigma) g(t_2-\sigma)\|\\
&+ |I_1\cap I_2| \sup_{\sigma \in I_1\cap I_2} \|U^{\odot \star}(t_2,t_2-\sigma)-U^{\odot \star}(t_1,t_1-\sigma)\| \|g(t_2-\sigma)\| \\
&+ |I_1\cap I_2| \sup_{t,\sigma \in I_1\cap I_2} \|U^{\odot \star}(t,t-\sigma)\| \|g(t_2-\sigma)-g(t_1-\sigma)\|,
\end{align*}
where $|\cdot|$ denotes the Lebesgue measure on $J \subseteq \mathbb{R}$. If we let $(t_1,r_1,s_1)\rightarrow(t_2,r_2,s_2)$ in norm, then the first term vanishes by definition. The second term vanishes as $U(t,s)$ (and so $U^{\odot \star}(t,s)$) is uniformly continuous along paths that keep $t-s$ constant \cite[Lemma 5.2]{Clement1988} and the last term vanishes due to the continuity of $g$. Hence, $v(\cdot,\cdot,\cdot,g)$ is continuous. Note that the second term does not appear for semigroups, as they are invariant under time translations.

Next, we will prove that the range of $v(\cdot,\cdot,\cdot,g)$ is contained in $j(X)$. Let $(t,r,s)\in \Theta_J$ and recall from \eqref{eq:defAsunstar s} that $\mathcal{D}(A^{\odot \star}(t)) = \mathcal{D}(A_0^{\odot \star})$. Taking the closure with respect to the norm defined on $X^{\odot \star}$ we get that
\begin{align*}
    \{x^{\odot \star} \in X^{\odot \star} \ : \ \lim_{h\downarrow 0} \|U^{\odot \star}(t+h,t)x^{\odot \star} - x^{\odot \star}\| = 0\} &= \overline{\mathcal{D}(A^{\odot \star}(t))} \\
    &= \overline{\mathcal{D}(A_0^{\odot \star})} = X^{\odot \odot} = j(X),
\end{align*}
where the first equality holds due to a sun-variant of \cite[Lemma 3.1]{Clement1988}. The last two equalities follow from the sun-variant of \eqref{eq:Xsuncharac} and $\odot$-reflexivity of $X$ with respect to $T_0$. We want to show that $v(t,r,s,g)$ is an element of this first set. 
\noindent
Using the continuity of $v(\cdot,\cdot,\cdot,g)$ we find that
\[\lim_{h\downarrow 0} \|U^{\odot \star}(t+h,t)v(t,r,s,g) - v(t,r,s,g)\| = \lim_{h\downarrow 0} \|v(t+h,r,s) - v(t,r,s)\|= 0,\]
and so we conclude that $v(t,r,s)\in j(X)$.

Finally, let $J$ be unbounded from below and suppose that $v(\cdot,\cdot,\cdot,g)$ in bounded in norm on $\Theta_J$. Define the map $w(\cdot,\cdot,g) : \Omega_J \to X^{\odot \star}$ as
\begin{equation*}
    w(t,r,g) := \lim_{n \to \infty} \int_{r-n}^r U^{\odot \star}(t,\tau)g(\tau)d\tau, \quad \forall (t,r) \in \Omega_J,
\end{equation*}
which is well-defined due to \cite[Lemma 9]{Janssens2020} and the boundedness of $v(\cdot,\cdot,\cdot,g)$ in norm. To see this, notice that for any fixed $t \in J$, the integrand of $w(\cdot,\cdot,g)$ is weak$^\star$ continuous, which implies weak$^\star$ Lebesgue measurability, since for any $\tau \in J$ and $h \in \mathbb{R}$ such that $t \geq \max\{\tau,\tau+h\}$ and $\tau+h \in J$ we obtain that for all $x^\odot \in X^\odot$
\begin{align*}
    &| \langle U^{\odot \star}(t,\tau+h)g(\tau+h),x^\odot \rangle - \langle U^{\odot \star}(t,\tau)g(\tau),x^\odot \rangle| \\
    &\leq | \langle g(\tau+h), U^{\odot}(\tau+h,t)x^\odot \rangle - \langle g(\tau+h), U^{\odot}(\tau,t)x^\odot \rangle | \\
    &+ | \langle g(\tau+h), U^{\odot}(\tau,t)x^\odot \rangle - \langle g(\tau),U^{\odot}(\tau,t)x^\odot \rangle| \\
    &\leq \|g(\tau + h)\| \ \|U^{\odot}(\tau+h,t)x^\odot - U^{\odot}(\tau,t)x^\odot\| + \|g(\tau+h) - g(\tau)\| \ \|U^\odot(\tau,t)\| \ \|x^\odot\|\\
    & \to 0, \quad \mbox{ as } h \to 0,
\end{align*}
since $g$ is norm continuous and $U^\odot$ is a strongly continuous backward evolutionary system. Furthermore, boundedness of $v$ implies uniform continuity, hence $w$ also continuous. Since $[r-n,r]$ is compact for any fixed $n \in \mathbb{N}$, each integral inside the limit of $w(t,r,g)$ lies in $j(X)$ by the reasoning above. As $j(X)=X^{\odot \odot}$ is closed $w(t,r,g) = v(t,r,-\infty) \in j(X)$.
\end{proof}

\subsection{Time-dependent nonlinear perturbations} \label{subsec: nonlinear perturbation}
The strongly continuous forward evolutionary system $U$ arises as a time-dependent bounded linear perturbation of the original $\mathcal{C}_0$-semigroup $T_0$, see \eqref{eq:T-LAODEphi} and \eqref{eq:T-LAIEphi}. The next logical step is to introduce a time-dependent nonlinear perturbation on $U$ itself. We can formulate solutions to this problem via a time-dependent nonlinear abstract integral equation. 

A \emph{time-dependent nonlinear perturbation} on an interval $J \subseteq \mathbb{R}$ can be represented as a $C^k$-smooth operator $R : J \times X \to X^{\odot \star}$ for some $k \geq 1$ that satisfies
\begin{equation} \label{eq:nonlinearterms}
    R(t,0) = 0, \quad D_2R(t,0) = 0, \quad \forall t \in J,
\end{equation}
where the $D_2R(t,0)$ denotes the partial Fr\'echet derivative of $R$ with respect to the second component evaluated $(t,0)$. Consider now the time-dependent nonlinear abstract integral equation
\begin{equation} \label{eq:T-AIEphi} \tag{T-AIE}
    u(t) = U(t,s)\varphi + j^{-1}\int_s^t U^{\odot \star}(t,\tau) R(\tau,u(\tau)) d\tau, \quad \varphi \in X,
\end{equation}
with $t \geq s$ and $u(s) = \varphi$, where $s$ plays the role of a starting time. It follows from \Cref{lemma:wk*integral Usunstar} that the weak$^\star$ integral in \eqref{eq:T-AIEphi} takes values in $j(X)$ and hence \eqref{eq:T-AIEphi} is well-defined. A solution to \eqref{eq:T-AIEphi} is similarly defined as in \Cref{subsec: linear perturbation}. We would like to show that \eqref{eq:T-AIEphi} admits a solution, and therefore the following result is a first step in the right direction. In the proof of the following result, we use some results from \Cref{subsec:interplay} and the proof is inspired by \cite[Proposition 24]{Janssens2020}.

\begin{proposition} \label{prop:equi nonlinear}
Let $I$ be a subinterval of $J$. A function $u : I \to X$ is a solution to \eqref{eq:T-AIEphi} if and only if $u$ is a solution to
\begin{equation} \label{eq:T-AIE2}
    u(t) = T_0(t-s)\varphi + j^{-1} \int_s^t T_0^{\odot \star}(t-\tau)[B(\tau)u(\tau) + R(\tau,u(\tau))] d\tau, \quad \varphi \in X.
\end{equation}
\end{proposition}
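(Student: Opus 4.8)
\emph{Plan.} The two equations differ only in that \eqref{eq:T-AIEphi} is built from the perturbed evolutionary system $U,U^{\odot\star}$ while \eqref{eq:T-AIE2} is built from the unperturbed semigroup $T_0,T_0^{\odot\star}$. The bridge between them is, on the one hand, the linear variation-of-constants formula \eqref{eq:T-LAIEphi}, which gives
\begin{equation*}
U(t,s)\varphi = T_0(t-s)\varphi + j^{-1}\!\int_s^t T_0^{\odot\star}(t-\tau)B(\tau)U(\tau,s)\varphi\,d\tau,
\end{equation*}
and, on the other hand, its inhomogeneous counterpart from \Cref{subsec:interplay}: for any continuous $g:J\to X^{\odot\star}$ and all $s\le t$,
\begin{equation*}
j^{-1}\!\int_s^t U^{\odot\star}(t,\tau)g(\tau)\,d\tau
 = j^{-1}\!\int_s^t T_0^{\odot\star}(t-\tau)\Big[\,B(\tau)\,j^{-1}\!\!\int_s^\tau U^{\odot\star}(\tau,\sigma)g(\sigma)\,d\sigma + g(\tau)\,\Big]d\tau .
\end{equation*}
Both identities make sense because, by \Cref{lemma:wk*integral Usunstar}, every weak$^\star$ integral appearing here takes values in $j(X)$, so that $j^{-1}$ may be applied and the result fed into $B(\tau)\in\mathcal{L}(X,X^{\odot\star})$. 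Throughout we set $g:=R(\cdot,u(\cdot))$, which is continuous whenever $u$ is, since $R$ is $C^k$ with $k\ge 1$.

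\emph{Forward implication.} Assume $u:I\to X$ solves \eqref{eq:T-AIEphi}. Substitute the first identity above for the term $U(t,s)\varphi$ and the second (interplay) identity, with $g=R(\cdot,u(\cdot))$, for the convolution term in \eqref{eq:T-AIEphi}. Collecting the two $T_0^{\odot\star}(t-\tau)B(\tau)(\cdot)$ integrals, their common integrand at an intermediate time $\tau\le t$ equals
\begin{equation*}
B(\tau)\Big[\,U(\tau,s)\varphi + j^{-1}\!\!\int_s^\tau U^{\odot\star}(\tau,\sigma)R(\sigma,u(\sigma))\,d\sigma\,\Big] = B(\tau)u(\tau),
\end{equation*}
by \eqref{eq:T-AIEphi} itself evaluated at $\tau$; this is legitimate because the upper-triangular (Volterra) structure on $\Omega_J$ makes the restriction of $u$ to $[s,\tau]$ again a solution of \eqref{eq:T-AIEphi}. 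Adding back the leftover term $j^{-1}\int_s^t T_0^{\odot\star}(t-\tau)g(\tau)\,d\tau$ and $T_0(t-s)\varphi$, what remains is exactly \eqref{eq:T-AIE2}.

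\emph{Reverse implication.} Assume $u:I\to X$ solves \eqref{eq:T-AIE2}, put $g:=R(\cdot,u(\cdot))$ and define
\begin{equation*}
w(t) := u(t) - U(t,s)\varphi - j^{-1}\!\int_s^t U^{\odot\star}(t,\tau)g(\tau)\,d\tau , \qquad t\in I,
\end{equation*}
which is continuous by \Cref{lemma:wk*integral Usunstar}. Inserting the two identities above into the bracketed part of this definition and recognising $U(\tau,s)\varphi + j^{-1}\int_s^\tau U^{\odot\star}(\tau,\sigma)g(\sigma)\,d\sigma = u(\tau)-w(\tau)$, a direct computation using \eqref{eq:T-AIE2} shows that all the $T_0^{\odot\star}$-convolutions of $g$ and $B(\cdot)u(\cdot)$ cancel and one is left with the homogeneous linear equation $w(t) = j^{-1}\int_s^t T_0^{\odot\star}(t-\tau)B(\tau)w(\tau)\,d\tau$ on $I$. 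This is \eqref{eq:T-LAIEphi} with zero initial datum, whose unique solution is $w\equiv 0$ — equivalently, since $\|T_0^{\odot\star}(\cdot)\|$ is bounded on compact time intervals and $B$ is bounded in norm, a Gronwall estimate on each compact subinterval of $I$ forces $w\equiv 0$. Hence $u$ solves \eqref{eq:T-AIEphi}.

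\emph{Main obstacle.} The substantive work is not the algebra but the justification that it may be carried out in $X^{\odot\star}$ with only a weak$^\star$ integration theory available: one must know a priori that each weak$^\star$ Riemann integral lands in $j(X)$ before applying $j^{-1}$ and feeding the result into $B(\tau)$, and the inhomogeneous interplay identity itself rests on interchanging the order of a double weak$^\star$ integral (a Fubini step). These are precisely what \Cref{lemma:wk*integral Usunstar} and the results of \Cref{subsec:interplay} are designed to supply; once they are invoked, the equivalence follows by the bookkeeping above, in the spirit of \cite[Proposition 24]{Janssens2020}.
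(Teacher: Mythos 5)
Your proposal is correct and rests on the same key ingredient as the paper's own proof, namely the appendix equivalence \Cref{prop:AIEf} between the $T_0^{\odot\star}$-based and $U^{\odot\star}$-based inhomogeneous variation-of-constants formulas: your second bridge identity is exactly that proposition specialized to $\varphi=0$, and your first is the defining equation \eqref{eq:T-LAIEphi} of $U$. The paper gets both implications in one line by applying \Cref{prop:AIEf} directly with the fixed continuous inhomogeneity $f=R(\cdot,u(\cdot))$, so your explicit substitution bookkeeping and the Gronwall/uniqueness step for the converse are sound but strictly speaking redundant.
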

\begin{proof}
Suppose that $u$ is a solution to \eqref{eq:T-AIE2} on $I$. Then $u$ is a solution of \eqref{eq:T-LAIEf2} on $I$ with $f = R(\cdot,u(\cdot))$. \Cref{prop:AIEf} implies that $u$ is given by \eqref{eq:T-LAIEf} on $I$ with $f = R(\cdot,u(\cdot))$, so $u$ satisfies \eqref{eq:T-AIEphi} on $I$. The converse is proven by reversing the order of steps.
\end{proof}

Since the nonlinearity $R$ is $C^k$-smooth, we know from the mean value inequality in Banach spaces \cite[Corollary 3.2]{Coleman2012} that $R$ is locally Lipschitz in the second component. One can use now a standard contraction argument \cite[Theorem VII.3.1 and VII.3.4]{Diekmann1995} on \eqref{eq:T-AIE2} to prove that for any $\varphi \in X$ there exists a unique (maximal) solution $u_\varphi$ of \eqref{eq:T-AIE2} on some (maximal) subinterval $I_\varphi =[s,t_\varphi)$ of $J$ with $s < t_\varphi \leq \infty$. \Cref{prop:equi nonlinear} shows that $u_\varphi$ is then also a unique (maximal) solution of \eqref{eq:T-AIEphi} on $I_\varphi$.

In this time-dependent setting, one expects the existence of a \emph{time-dependent semiflow}, that is the nonlinear analogue of a forward evolutionary system and the time-dependent analogue of a semiflow introduced in \cite[Definition VII.2.1]{Diekmann1995}. Time-dependent semiflows also known as \emph{processes}, see \cite{Church2018,Church2019} for more information.

\begin{definition}
Let $J \subseteq \mathbb{R}$ be an interval. A \emph{time-dependent semiflow} on a Banach space $X$ is a map $S : \mathcal{D}(S) \subseteq \Omega_J \times X \to X$, that has the following properties:
\begin{enumerate}
    \item For any $s \in J$ and $x \in X$, there exists a $t_x \in [s,\infty] \cap J $ such that $\mathcal{D}(S) = \{(t,s,x) \in \Omega_J \times X \ : \ t \in [s,t_x) \}$.
    \item For any $s \in J$ and $x \in X$ we have $S(s,s,x) = x.$
    \item For any $t,v,s \in J$  with $t \geq v \geq s$ and $x \in X$ it holds
    \begin{equation*}
        S(t,s,x) = S(t,v,S(v,s,x)).
    \end{equation*}
\end{enumerate}
\end{definition}

With the family of (maximal) solutions to \eqref{eq:T-AIEphi}, one can associate a time-dependent semiflow on $X$ via the map $S : \mathcal{D}(S) \to X$ defined by
\begin{equation} \label{eq:semiflowtimedep}
    \mathcal{D}(S) := \{(t,s,\varphi) \in \Omega_J \times X \ : \ t \in I_{\varphi}\}, \quad S(t,s,\varphi) := u_\varphi(t),
\end{equation}
where $I_\varphi$ denotes the (maximal) subinterval of $J$ and $u_\varphi$ is the unique (maximal) solution of \eqref{eq:T-AIEphi}.

\section{Existence of the center manifold} \label{sec: Existence}
In this section, we prove the existence of a periodic smooth finite-dimensional center manifold near the origin of \eqref{eq:T-AIEphi} and apply afterwards the obtained results to classical DDEs. 

To specify the setting, let $X$ be a real Banach space that is $\odot$-reflexive with respect to a given $\mathcal{C}_0$-semigroup $T_0$ defined on $X$. Let $B : J \to \mathcal{L}(X,X^{\odot \star})$ be a time-dependent bounded linear perturbation defined on an interval $J \subseteq \mathbb{R}$ and define the strongly continuous forward evolutionary system $U$ as the unique solution of \eqref{eq:T-LAIEphi} together with the (sun) dual(s) $U^\star,U^\odot$ and $U^{\odot \star}$. Assume that $R : J \times X \to X^{\odot \star}$ is a time-dependent nonlinear perturbation that is $C^k$-smooth for some $k \geq 1$. Furthermore, let $S : \mathcal{D}(S) \to X$ denote the time-dependent semiflow defined in \eqref{eq:semiflowtimedep} that corresponds a local unique solution of \eqref{eq:T-AIEphi}.

It turns out that these assumptions are not sufficient to prove the existence of a periodic smooth finite-dimensional center manifold for \eqref{eq:T-AIEphi}. Therefore, we invoke in \Cref{subsec: Spectral} a hypothesis about the spectral structure of $X$ and $U$. It turns out that we can lift the time-specific spectral decomposition of $X$ towards a spectral decomposition of $X^{\odot \star}$, using some technical lemmas presented in \Cref{appendix: lifting} and \Cref{appendix: Spectral DDE}. We show boundedness of solutions of the abstract integral equation in \Cref{subsec: bounded solutions} and \Cref{subsec: modification nonlinearity}. This allows us to prove the existence of a  Lipschitz center manifold in \Cref{subsec: Lipschitz CM} using a fixed point argument. In \Cref{subsec: properties CM} we show smoothness and periodicity using the theory of scales of Banach spaces \cite{Vanderbauwhede1987}, where the details can be found in \Cref{appendix: smoothness and periodicty}. Finally, in \Cref{subsec: classical DDEs}, we explain how the setting of classical DDEs fits naturally in the perturbation framework,  see \Cref{cor:CMT DDE} for the main result.

\subsection{Spectral decompositions of $X$ and $X^{\odot \star}$} \label{subsec: Spectral}
The construction of a local center manifold has been established for equilibria under the assumption of the existence of a topological direct sum decomposition of $X^{\odot \star}$, see \cite[Section IX.2]{Diekmann1995}. The motivation behind this follows from the fact that the nonlinearity maps into $X^{\odot \star}$. However, depending on the evolution equation of interest, one should always first compute $X^{\odot \star}$ and its associated $\odot \star$-tools to check the underlying assumptions. It is therefore more convenient to state a hypothesis in $X$ and lift this towards $X^{\odot \star}$, which was also the observation made in \cite[Section 5.1]{Janssens2020}. Furthermore, the decompositions in $X$ and $X^{\odot \star}$ allows us to move back and forth between the two. The following hypothesis on the time-dependent spectral decompositions is inspired by \cite{Janssens2020,Church2018}.

\begin{hypothesis} \label{hyp:CMT}
The space $X$ and the forward evolutionary system $U$ have the following properties:
\begin{enumerate}
    \item $X$ admits a direct sum decomposition
    \begin{equation} \label{eq:decomposition X hyp}
        X = X_{-}(s) \oplus X_0(s) \oplus X_{+}(s), \quad \forall s \in \mathbb{R},
    \end{equation}
    where each summand is closed.
    \item There exist three continuous time-dependent (spectral) projectors $P_{i} : \mathbb{R} \to \mathcal{L}(X)$ with $\ran(P_i(s))= X_i(s)$ for any $s \in \mathbb{R}$ and $i \in \{-,0,+\}$. 
    \item There exists a constant $N \geq 0$ such that $\sup_{s \in \mathbb{R}}(\|P_{-}(s)\| + \|P_{0}(s)\| + \|P_{+}(s)\|) = N < \infty$.
    \item The projections are mutually orthogonal, meaning that $P_{i}(s)P_j(s) = 0$ for all $i \neq j$ and $s \in \mathbb{R}$ with $i,j \in \{-,0,+\}$.
    \item The projections commute with the forward evolutionary system: $U(t,s)P_i(s) = P_i(t)U(t,s)$ for all $i \in \{-,0,+\}$ and $t \geq s$.
    \item Define the restrictions $U_{i}(t,s) : X_{i}(s) \to X_{i}(t)$ for $i \in \{-,0,+\}$ and $t \geq s$. The operators $U_{0}(t,s)$ and $U_{+}(t,s)$ are invertible and also backward evolutionary systems. Specifically, for any $t,\tau,s \in \mathbb{R}$ it holds
    \begin{equation} \label{eq:U0U+}
        U_0(t,s) = U_0(t,\tau)U_0(\tau,s), \quad U_+(t,s) = U_+(t,\tau)U_+(\tau,s).
    \end{equation}
    \item The decomposition \eqref{eq:decomposition X hyp} is an exponential trichotomy on $\mathbb{R}$ meaning that there exist $a < 0 < b$ such that for every $\varepsilon > 0$ there exists a $K_\varepsilon > 0$ such that
    \begin{align*}
        \|U_-(t,s)\| &\leq K_\varepsilon e^{a(t-s)}, \quad t \geq s,\\
        \|U_0(t,s)\| &\leq K_\varepsilon e^{\varepsilon|t-s|}, \quad t,s \in \mathbb{R},\\
        \|U_+(t,s)\| &\leq K_\varepsilon e^{b(t-s)}, \quad t \leq s.
    \end{align*}
\end{enumerate}
We call $X_{-}(s),X_{0}(s)$ and $X_{+}(s)$ the \emph{stable subspace}, \emph{center subspace} and \emph{unstable subspace} (at time $s$) respectively.
\end{hypothesis}

As the stable-, center- and unstable subspace are only defined at a specific time $s\in \mathbb{R}$, it is convenient to introduce  the sets 
\begin{equation*}
    X_{i} := \{(t,\varphi) \in \mathbb{R} \times X \ : \ \varphi \in X_i(t) \},
\end{equation*}
for $i \in \{-,0,+\}$ and call them the \emph{stable fiber bundle, center fiber bundle} and \emph{unstable fiber bundle} respectively. It is explained in \Cref{appendix: lifting} how  \Cref{hyp:CMT} can be lifted to $X^{\odot \star}$, see \Cref{prop:Xsunstar hyp} for the main result. We also impose the following hypothesis which will often be used in several upcoming proofs to transfer information from $X^{\odot \star}$ towards $X$. 

\begin{hypothesis} \label{hyp:X0+sunstar}
The subspaces $X_{0}^{\odot \star}(s)$ and $X_{+}^{\odot \star}(s)$ are contained in $j(X_0(s))$ and $j(X_{+}(s))$ respectively, for all $s \in \mathbb{R}$.
\end{hypothesis}
For the setting of classical DDEs, we even have an equality between the spaces presented in \Cref{hyp:X0+sunstar}, see \Cref{appendix: Spectral DDE}.

As part of the construction of a center manifold, we will be interested in solutions that exist for all time. It is therefore helpful to write \eqref{eq:T-AIEphi} in translation invariant form
\begin{equation} \label{eq:variation constants CMT}
    u(t) = U(t,s)u(s) + j^{-1}\int_s^t U^{\odot \star}(t,\tau) R(\tau,u(\tau)) d\tau, \quad -\infty < s \leq t < \infty.
\end{equation}
One of the problems that occur in developing a center manifold theory for infinite-dimensional systems is that the linearized equation of \eqref{eq:variation constants CMT} can have unbounded solutions in $X_0$. This leads to working in a function space that allows limited exponential growth both at plus and minus infinity. To do this, let $E$ be a Banach space, $\eta,s \in \mathbb{R}$ and define
\begin{equation*}
    \BC_{s}^{\eta}(\mathbb{R},E) := \bigg \{ f \in C(\mathbb{R},E) \ : \ \sup_{t \in \mathbb{R}} e^{-\eta|t-s|}\|f(t)\| < \infty \bigg \},
\end{equation*}
with the weighted supremum norm
\begin{equation*}
    \|f\|_{\eta,s} := \sup_{t \in \mathbb{R}} e^{-\eta|t-s|}\|f(t)\|,
\end{equation*}
such that $\BC_{s}^{\eta}(\mathbb{R},E)$ becomes a Banach space. Before we start working with the inhomogeneous equation \eqref{eq:variation constants CMT}, let us first derive some properties of the homogeneous equation
\begin{equation} \label{eq:homogeneous CMT}
    u(t) = U(t,s)u(s), \quad (t,s) \in \Omega_{J},
\end{equation}
on some interval $J \subseteq \mathbb{R}$. A solution of \eqref{eq:homogeneous CMT} is defined similarly as in \Cref{subsec: linear perturbation}. We have the following result that connects the center eigenspace $X_0(s)$ with $\BC_{s}^{\eta}(\mathbb{R},X)$ and the proof is inspired by \cite[Lemma 29]{Janssens2020} and \cite[Lemma 5.2.1]{Church2018}.
\begin{proposition} \label{prop:X0s}
Let $\eta \in (0,\min \{-a,b\})$ and $s \in \mathbb{R}$. Then
\begin{align*}
    X_0(s) = \{ \varphi \in X \ : & \  \mbox{there exists a solution of \eqref{eq:homogeneous CMT} on $\mathbb{R}$ through $\varphi$ belonging to } \BC_{s}^{\eta}(\mathbb{R},X) \}.
\end{align*}
\end{proposition}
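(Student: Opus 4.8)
The plan is to prove the two inclusions separately, in each case converting the growth condition into a statement about the spectral projectors by means of the exponential trichotomy in \Cref{hyp:CMT}. For the inclusion ``$\subseteq$'', take $\varphi \in X_0(s)$ and set $u(t) := U_0(t,s)\varphi$ for all $t \in \mathbb{R}$; this is meaningful because $U_0$ is a two-sided evolutionary system satisfying the cocycle identity \eqref{eq:U0U+} for all triples. To see that $u$ solves \eqref{eq:homogeneous CMT} on $\mathbb{R}$, observe that for $t \geq r$ one has $u(r) = U_0(r,s)\varphi \in X_0(r)$, and since $U_0(t,r)$ is by definition the restriction of $U(t,r)$ to $X_0(r)$, the commutation of the projectors with $U$ together with \eqref{eq:U0U+} gives $U(t,r)u(r) = U_0(t,r)U_0(r,s)\varphi = U_0(t,s)\varphi = u(t)$, while $u(s)=\varphi$. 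Continuity of $u$ on $[s,\infty)$ follows by writing $u(t) = P_0(t)U(t,s)\varphi$ and combining strong continuity of $U$ with norm continuity of $P_0$; continuity on $(-\infty,s]$ follows the same way after rewriting $u(t) = P_0(t)U(t,r_1)\bigl(U_0(r_1,s)\varphi\bigr)$ for any $r_1 \leq t$, again via \eqref{eq:U0U+}. Finally, the center estimate gives $\|u(t)\| \leq K_\varepsilon e^{\varepsilon|t-s|}\|\varphi\|$, so choosing $\varepsilon \in (0,\eta]$ yields $\|u\|_{\eta,s} \leq K_\varepsilon\|\varphi\| < \infty$, i.e.\ $u \in \BC_s^\eta(\mathbb{R},X)$.

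For the inclusion ``$\supseteq$'', let $u \in \BC_s^\eta(\mathbb{R},X)$ be a solution of \eqref{eq:homogeneous CMT} on $\mathbb{R}$ with $u(s)=\varphi$, and decompose $\varphi = P_-(s)\varphi + P_0(s)\varphi + P_+(s)\varphi$; it suffices to show that the stable and unstable components vanish. For the unstable part, the solution property with $r=s$ and the commutation of $P_+$ with $U$ give $P_+(t)u(t) = U(t,s)P_+(s)\varphi = U_+(t,s)P_+(s)\varphi$ for $t \geq s$; inverting $U_+$ and using $\|U_+(s,t)\| \leq K_\varepsilon e^{-b(t-s)}$ together with $\|u(t)\| \leq \|u\|_{\eta,s}e^{\eta(t-s)}$ yields $\|P_+(s)\varphi\| \leq N K_\varepsilon \|u\|_{\eta,s}\, e^{(\eta-b)(t-s)}$, which tends to $0$ as $t \to +\infty$ since $\eta < b$; hence $P_+(s)\varphi = 0$. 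For the stable part, use that $u$ is defined on all of $\mathbb{R}$ and look backward: for $r \leq s$ one has $u(s) = U(s,r)u(r)$, so $P_-(s)\varphi = U_-(s,r)P_-(r)u(r)$; now $\|U_-(s,r)\| \leq K_\varepsilon e^{a(s-r)}$ and $\|P_-(r)u(r)\| \leq N\|u\|_{\eta,s}e^{\eta(s-r)}$, whence $\|P_-(s)\varphi\| \leq N K_\varepsilon \|u\|_{\eta,s}\, e^{(a+\eta)(s-r)} \to 0$ as $r \to -\infty$ since $\eta < -a$; hence $P_-(s)\varphi = 0$. Therefore $\varphi = P_0(s)\varphi \in X_0(s)$, which finishes the argument.

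I expect no serious obstacle: this is the standard identification of the center fiber $X_0(s)$ with the set of initial data admitting a globally defined, at most $\eta$-subexponentially growing orbit of the linearized flow. The only points demanding a little care are (i) verifying continuity of the candidate $U_0(\cdot,s)\varphi$ in the backward time direction, which relies on $U_0$ being a two-sided evolutionary system obeying the cocycle identity \eqref{eq:U0U+}, and (ii) pairing each projected component with the correct one-sided exponential bound of the trichotomy and arranging the auxiliary constant $\varepsilon$ (one needs $\varepsilon \leq \eta$ in the ``$\subseteq$'' direction, while the hypothesis $\eta < \min\{-a,b\}$ is exactly what forces the two limits in the ``$\supseteq$'' direction to vanish).
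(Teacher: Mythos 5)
Your proof is correct and follows essentially the same route as the paper: the forward inclusion via the candidate $U_0(\cdot,s)\varphi$ and the center estimate of the trichotomy, and the converse by showing $P_\pm(s)\varphi=0$ using the unstable bound forward in time and the stable bound backward in time. The only cosmetic differences are that you verify the solution property and continuity of $U_0(\cdot,s)\varphi$ explicitly (the paper asserts them) and phrase the converse as a direct limit to zero rather than the paper's blow-up contradiction; both are sound.
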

\begin{proof}
Let $\varphi \in X_0(s)$, then $u_\varphi : \mathbb{R} \to X$ defined by $u_\varphi(t) := U(t,s) \varphi = U_{0}(t,s) \varphi$ is a solution of \eqref{eq:homogeneous CMT} on $\mathbb{R}$ through $\varphi$. Let us now show that $u_\varphi \in \BC_{s}^\eta(\mathbb{R},X)$. Let $\varepsilon \in (0,\eta]$ be given. It follows from the exponential trichotomy of \Cref{hyp:CMT} that 
\begin{equation*}
    e^{-\eta |t-s|} \|u_\varphi(t)\| =  e^{-\eta |t-s|} \|U_0(t,s) \varphi\| \leq K_\varepsilon e^{(\varepsilon -\eta) |t-s|} \|\varphi\| \leq K_\varepsilon\|\varphi\|, \quad \forall t,s \in \mathbb{R},
\end{equation*}
since $\varepsilon- \eta < 0$. Taking the supremum over $t \in \mathbb{R}$ yields $u_\varphi \in \BC_{s}^\eta(\mathbb{R},X)$.

Conversely, suppose that $\varphi \in X$ admits a solution $u_\varphi \in \BC_{s}^\eta(\mathbb{R},X)$ of \eqref{eq:homogeneous CMT} on $\mathbb{R}$ that goes through $\varphi$ at time $s$ i.e. $u_\varphi(s) = \varphi$. We want to show that $P_{\pm}(s) \varphi = 0$ because then $\varphi =  (P_{-}(s) + P_0(s) + P_{+}(s)) \varphi = P_0(s)\varphi$ so $\varphi \in X_0(s)$. To do this, let us first show that $P_{+}(s) \varphi = 0$. Take $t \geq s$ and $\varepsilon \in (0,\eta]$, then
\begin{equation*}
    \|P_{+}(s) \varphi \| = \|U_{+}(s,t) P_{+}(t) u_\varphi(t) \| \leq K_\varepsilon e^{b(s-t)} N \|u_\varphi(t)\|, \quad \forall t \geq s.
\end{equation*}
It follows for $t \geq \max \{s,0\}$ that
\begin{equation*}
    e^{-\eta t} \|u_\varphi(t)\| \geq \frac{e^{-bs}}{K_\varepsilon N} e^{(b-\eta)t} \|P_{+}(s) \varphi \| \to \infty, \quad \mbox{ as } t \to \infty,
\end{equation*}
unless $P_{+}(s) \varphi = 0$. To prove $P_{-}(s) \varphi = 0$, take $t \leq s$ and $\varepsilon \in (0,\eta]$, then
\begin{equation*}
    \|P_{-}(s) \varphi \| = \|U_{-}(s,t) P_{-}(t) u_\varphi(t) \| \leq K_\varepsilon e^{a(s-t)}N \|u_\varphi(t)\|.
\end{equation*}
It follows for $t \leq \min \{s,0\}$ that
\begin{equation*}
    e^{-\eta t} \|u_\varphi(t)\| \geq \frac{e^{-as}}{K_\varepsilon N} e^{(a+ \eta)t} \|P_{-}(s)\varphi\| \to \infty, \quad \mbox{ as } t \to -\infty,
\end{equation*}
unless $P_{-}(s) \varphi = 0$. Hence $P_\pm(s) = 0$ and so $\varphi \in X_0(s)$.
\end{proof}

\subsection{Bounded solutions of the linear inhomogeneous equation} \label{subsec: bounded solutions}
Let $f : \mathbb{R} \to X^{\odot \star}$ be a continuous function and consider the linear inhomogeneous integral equation
\begin{equation} \label{eq:inhomogeneous CMT}
    u(t) = U(t,s)u(s) + j^{-1} \int_s^t U^{\odot \star}(t,\tau)f(\tau) d\tau, \quad (t,s) \in \Omega_J,
\end{equation}
on an interval $J \subseteq \mathbb{R}$. A solution of \eqref{eq:inhomogeneous CMT} is defined similarly as in \Cref{subsec: linear perturbation}. To prove existence of a center manifold, we need a pseudo-inverse of bounded solutions of \eqref{eq:inhomogeneous CMT}. To do this, define (formally) for any $\eta \in (0, \min \{-a,b \})$ and $s \in \mathbb{R}$ the operator $\mathcal{K}_{s}^{\eta} : \BC_s^\eta(\mathbb{R},X^{\odot \star}) \to \BC_s^\eta(\mathbb{R},X)$ as
\begin{align*}
    (\mathcal{K}_{s}^{\eta} f)(t) &:= j^{-1} \int_{s}^{t}U^{\odot \star}(t,\tau) P_0^{\odot \star}(\tau) f(\tau) d\tau 
    +  j^{-1} \int_{\infty}^{t}U^{\odot \star}(t,\tau) P_+^{\odot \star}(\tau) f(\tau) d\tau\\
    & \ +  j^{-1} \int_{-\infty}^{t}U^{\odot \star}(t,\tau) P_{-}^{\odot \star}(\tau) f(\tau) d\tau, \quad \forall f \in \BC_s^\eta(\mathbb{R},X^{\odot \star}),
\end{align*}
and we have to check that this is indeed a well-defined operator. This will be proven in the following proposition and also the fact that $\mathcal{K}_{s}^{\eta}$ is precisely the pseudo-inverse we are looking for. The proof is inspired by \cite[Proposition 30]{Janssens2020} and \cite[Lemma 5.2.3]{Church2018}.
\begin{proposition} \label{prop:ketas}
Let $\eta \in (0, \min \{-a,b \})$ and $s \in \mathbb{R}$. The following properties hold.
\begin{enumerate}
    \item $\mathcal{K}_{s}^{\eta}$ is a well-defined bounded linear operator. Moreover, the operator norm $\|\mathcal{K}_{s}^{\eta}\|$ is bounded above independent of $s$.
    \item $\mathcal{K}_{s}^{\eta}f$ is the unique solution of \eqref{eq:inhomogeneous CMT} in $\BC^{\eta}_s(\mathbb{R},X)$ with vanishing $X_0(s)$-component at time $s$.
    \item The map from  $\BC_s^0(\mathbb{R},X^{\odot \star})$ to $\BC_s^0(\mathbb{R},X)$ given by $f \mapsto (I-P_0(\cdot))(\mathcal{K}_s^{0}f)(\cdot)$ is well-defined, linear and bounded above independent of $s$.
\end{enumerate}
\end{proposition}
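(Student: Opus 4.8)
The plan is to treat the three summands in the definition of $\mathcal{K}_s^\eta$ separately, in the spirit of the equilibrium arguments in \cite{Janssens2020} and \cite{Church2018}. The decisive simplification comes from \Cref{hyp:X0+sunstar}: for $i\in\{0,+\}$ the integrand $P_i^{\odot \star}(\tau)f(\tau)$ lies in $j(X_i(\tau))$, so, setting $\phi_i(\tau):=j^{-1}P_i^{\odot \star}(\tau)f(\tau)\in X_i(\tau)$ and using that $U^{\odot \star}$ extends $U$ and intertwines with $j$, the first two weak$^\star$ integrals in $\mathcal{K}_s^\eta f$ become genuine Riemann integrals $\int_s^t U_0(t,\tau)\phi_0(\tau)\,d\tau$ and $-\int_t^\infty U_+(t,\tau)\phi_+(\tau)\,d\tau$ with values in $X$. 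Only the $P_-^{\odot \star}$-term has to be kept as a weak$^\star$ integral; it is controlled by the lifted exponential trichotomy on $X^{\odot \star}$ of \Cref{prop:Xsunstar hyp} together with \Cref{lemma:wk*integral Usunstar}. Throughout one uses that $j$ is isometric and that, by \Cref{prop:Xsunstar hyp}, the lifted projectors $P_i^{\odot \star}(\tau)$ are norm-continuous in $\tau$, uniformly bounded by some $\widetilde N$, sum to the identity of $X^{\odot \star}$, restrict to $j(X)$ as $j\circ P_i(\tau)$, and commute with $U^{\odot \star}$, with the restrictions $U_-^{\odot \star},U_0^{\odot \star},U_+^{\odot \star}$ satisfying the trichotomy for the same $a<0<b$ and (possibly larger) constants $\widetilde K_\varepsilon$.

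For item~1, fix $f$ and $\varepsilon\in(0,\eta)$. From $\|\phi_i(\tau)\|\le \widetilde N e^{\eta|\tau-s|}\|f\|_{\eta,s}$ together with $\|U_0(t,\tau)\|\le K_\varepsilon e^{\varepsilon|t-\tau|}$, $\|U_+(t,\tau)\|\le K_\varepsilon e^{b(t-\tau)}$ ($\tau\ge t$), $\|U_-^{\odot \star}(t,\tau)\|\le \widetilde K_\varepsilon e^{a(t-\tau)}$ ($\tau\le t$), and the elementary integrals of $e^{\varepsilon|t-\tau|}e^{\eta|\tau-s|}$, $e^{b(t-\tau)}e^{\eta|\tau-s|}$, $e^{a(t-\tau)}e^{\eta|\tau-s|}$ over the relevant ranges — convergent, and of order $e^{\eta|t-s|}$, thanks to $\varepsilon<\eta<\min\{-a,b\}$ — one bounds each of the three terms by a constant multiple of $e^{\eta|t-s|}\|f\|_{\eta,s}$, the constants depending only on $a,b,\eta,\varepsilon,K_\varepsilon,\widetilde K_\varepsilon,\widetilde N$ and not on $s$. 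The $P_-^{\odot \star}$-integrand $\tau\mapsto U_-^{\odot \star}(t,\tau)P_-^{\odot \star}(\tau)f(\tau)$ is weak$^\star$ continuous and norm-majorized on $(-\infty,t]$ by an integrable function, so $\int_{-\infty}^t$ exists as the norm limit of the truncations $\int_{-n}^t$; each truncation lies in the norm-closed subspace $j(X)=X^{\odot \odot}$ by \Cref{lemma:wk*integral Usunstar}, hence so does the limit, and likewise the $P_0$- and $P_+$-Riemann integrals take values in $X$. Continuity in $t$ of each term follows from joint strong continuity of $U_0$ and $U_+$ (plus boundary contributions of order $|t_1-t_2|$) and, for the $P_-^{\odot \star}$-term, from $\int_{-n}^t\to\int_{-\infty}^t$ uniformly for $t$ in compact sets, combined with the continuity of the truncations granted by \Cref{lemma:wk*integral Usunstar}. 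Summing the three estimates yields $\mathcal{K}_s^\eta f\in\BC_s^\eta(\mathbb{R},X)$ and $\sup_{s\in\mathbb{R}}\|\mathcal{K}_s^\eta\|<\infty$; linearity is clear.

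For item~2, put $g:=\mathcal{K}_s^\eta f$. Applying the bounded, weak$^\star$-to-weak$^\star$ continuous operator $U^{\odot \star}(t,\sigma)$ to $jg(\sigma)$ and using $U^{\odot \star}(t,\sigma)U^{\odot \star}(\sigma,\tau)=U^{\odot \star}(t,\tau)$ on the invariant subspaces — for the $P_\pm^{\odot \star}$-pieces this uses that $U_\pm^{\odot \star}$ are two-sided evolutionary systems — shows that $U^{\odot \star}(t,\sigma)jg(\sigma)$ equals $jg(t)$ with the three upper limits $t$ replaced by $\sigma$; subtracting and using $\sum_i P_i^{\odot \star}(\tau)=I$ gives $jg(t)-jU(t,\sigma)g(\sigma)=\int_\sigma^t U^{\odot \star}(t,\tau)f(\tau)\,d\tau$, so $g$ solves \eqref{eq:inhomogeneous CMT} on $\mathbb{R}$. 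At $t=s$ the first integral is $\int_s^s=0$ and the remaining two lie in $X_+^{\odot \star}(s)\oplus X_-^{\odot \star}(s)=\ker P_0^{\odot \star}(s)$, so $0=P_0^{\odot \star}(s)jg(s)=jP_0(s)g(s)$ and hence $P_0(s)g(s)=0$. For uniqueness, let $u\in\BC_s^\eta(\mathbb{R},X)$ be any solution of \eqref{eq:inhomogeneous CMT} on $\mathbb{R}$ with $P_0(s)u(s)=0$; then $w:=u-g$ solves \eqref{eq:homogeneous CMT} on $\mathbb{R}$ and, since $\BC_s^\eta(\mathbb{R},X)=\BC_t^\eta(\mathbb{R},X)$ with equivalent norms for every $t$ (as $\big||r-s|-|r-t|\big|\le|t-s|$), \Cref{prop:X0s} applied with base point $t$ gives $w(t)\in X_0(t)$ for every $t\in\mathbb{R}$. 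In particular $w(s)=P_0(s)w(s)=0$, so $w(t)=U(t,s)w(s)=0$ for $t\ge s$, while for $t<s$ the identity $0=w(s)=U_0(s,t)w(t)$ with $U_0(s,t)$ invertible forces $w(t)=0$; thus $u=g$.

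For item~3, $\eta=0$ makes $\BC_s^0(\mathbb{R},E)=BC(\mathbb{R},E)$ with $\|f\|_{0,s}=\|f\|_\infty$ independent of $s$. The three integrals defining $(\mathcal{K}_s^0 f)(t)$ still converge — the $P_+$-piece because $b>0$, the $P_-^{\odot \star}$-piece because $a<0$, exactly as in item~1 with the weight $e^{\eta|\tau-s|}$ replaced by $1$ — and their $j$-preimages lie in $X_0(t)$, $X_+(t)$ and $X_-(t)$ respectively; for the last, $j^{-1}\!\int_{-\infty}^t U^{\odot \star}(t,\tau)P_-^{\odot \star}(\tau)f(\tau)\,d\tau$ lies in $j(X)\cap X_-^{\odot \star}(t)=j(X_-(t))$ by uniqueness of the lifted decomposition. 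Hence $I-P_0(t)$ annihilates the $P_0$-piece and is the identity on the other two, so
\[
(I-P_0(t))(\mathcal{K}_s^0 f)(t)=-\int_t^\infty U_+(t,\tau)\phi_+(\tau)\,d\tau+j^{-1}\int_{-\infty}^t U^{\odot \star}(t,\tau)P_-^{\odot \star}(\tau)f(\tau)\,d\tau,
\]
whose norm is at most $\big(K_\varepsilon\widetilde N/b+\widetilde K_\varepsilon\widetilde N/(-a)\big)\|f\|_\infty$ since $\int_t^\infty e^{b(t-\tau)}\,d\tau=b^{-1}$ and $\int_{-\infty}^t e^{a(t-\tau)}\,d\tau=(-a)^{-1}$; this bound and the continuity in $t$ (as in item~1) do not depend on $s$. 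I expect the main obstacle to be precisely this treatment of the $P_-^{\odot \star}$-term: \Cref{lemma:wk*integral Usunstar}, as stated, requires norm-boundedness of the truncated integrals over all of $\Theta_J$, which fails for $\eta>0$, so one cannot quote it verbatim; instead one must observe that the lifted trichotomy furnishes an explicit integrable majorant for the integrand on $(-\infty,t]$, which both makes $\int_{-\infty}^t$ a norm limit of truncations inside the closed subspace $X^{\odot \odot}=j(X)$ and yields the uniform-on-compacta convergence needed for continuity in $t$. The accompanying bookkeeping with the lifted objects of \Cref{prop:Xsunstar hyp} — invariance of the decomposition under $U^{\odot \star}$, the intertwining $P_i^{\odot \star}(\tau)j=jP_i(\tau)$, and $U_\pm^{\odot \star}$ being two-sided — is what legitimizes the manipulations in item~2.
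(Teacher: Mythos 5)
Your proof is correct and follows essentially the same route as the paper's: the same three-way splitting of $\mathcal{K}_s^\eta$, the same use of \Cref{hyp:X0+sunstar} to pull the $P_0^{\odot \star}$- and $P_+^{\odot \star}$-integrals back into $X$, the same exponential-trichotomy estimates yielding an $s$-independent operator norm, and the same uniqueness argument via \Cref{prop:X0s}. Your observation that \Cref{lemma:wk*integral Usunstar} cannot be quoted verbatim for the $P_-^{\odot \star}$-term when $\eta>0$ (the paper invokes it somewhat loosely there), together with your fix via an integrable majorant and norm limits of truncations in the closed subspace $j(X)$, is apt, but it is a refinement of, not a departure from, the paper's argument.
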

\begin{proof}
We start by proving the first assertion. Let $\varepsilon \in (0,\eta)$ be given and notice that for a given $f \in \BC^{\eta}_s(\mathbb{R},X^{\odot \star})$, the three integrals in the definition of $\mathcal{K}_s^\eta$ define functions $I_{0}(\cdot,s) :  \mathbb{R} \to X^{\odot \star}$ and $I_{i} :  \mathbb{R} \to X^{\odot \star}$ for $i \in \{+,-\}$. We have to show that $I_0(\cdot,s)$ and $I_{i}$ are well-defined continuous functions that take values in $j(X)$ and satisfy certain estimates.\\

\noindent
$I_0(\cdot,s)$: The straightforward estimate
\begin{equation} \label{eq:I0}
    \|I_0(t,s)\| \leq K_\varepsilon N \|f\|_{\eta,s} \frac{e^{\eta |t-s|}}{\eta - \varepsilon} < \infty, \quad \forall t \in \mathbb{R},
\end{equation}
proves that $I_0(\cdot,s)$ is a well-defined weak$^\star$ integral. Let $\tau \in [s,t]$ be given. By \Cref{hyp:X0+sunstar} we know that $P_0^{\odot \star}(\tau) f(\tau) \in j(X_0(\tau))$ and so
\begin{equation*}
    U^{\odot \star}(t,\tau)P_0^{\odot \star}(\tau) f(\tau) = U^{\odot \star}(t,\tau)j j^{-1} P_0^{\odot \star}(\tau) f(\tau) = jU_{0}(t,\tau) j^{-1} P_0^{\odot \star}(\tau) f(\tau).
\end{equation*}
Hence,
\begin{equation*}
    I_0(t,s) = j \int_s^{t} U_0(t,\tau) j^{-1} P_0^{\odot \star}(\tau)f(\tau) d\tau \in j(X), \quad \forall t \in \mathbb{R}.
\end{equation*}
The map $I_0(\cdot,s)$ is continuous due to \Cref{lemma:wk*integral Usunstar} because $[s,t]$ is compact and the maps $P_{0}^{\odot \star}$ and $f$ are is continuous. \\

\noindent
$I_{+}:$ Notice that
\begin{equation} \label{eq:estimate I+}
    \|I_+(t)\| \leq K_\varepsilon N \|f\|_{\eta,s}e^{bt} \int_t^\infty e^{-b \tau + \eta |\tau - s|} d\tau, \quad \forall t \in \mathbb{R},
\end{equation}
and to prove norm boundedness of $I_+$, we have to evaluate the integral in the last estimate above. A calculation shows that
\begin{equation} \label{eq:expo integral}
    \int_t^\infty e^{-b \tau + \eta |\tau - s |} d\tau =
    \begin{dcases}
    \frac{e^{-bt}}{b- \eta} e^{\eta(t-s)}, \quad & t \geq s \\
    \frac{e^{-bt}}{b+\eta}e^{\eta (s-t)} - \frac{e^{-bs}}{b+\eta} + \frac{e^{-bs}}{b-\eta}, \quad &t \leq s.
    \end{dcases}
\end{equation}
We want to estimate the $t\leq s$ case. Notice that for real numbers $ \alpha \geq \beta$ we have
\begin{equation*}
    (\alpha - \beta) \bigg( \frac{1}{b+\eta} - \frac{1}{b-\eta} \bigg) = \frac{-2 \eta (\alpha - \beta)}{(b+ \eta)(b-\eta)} \leq 0,
\end{equation*}
since $\eta < b$ by assumption. Hence,
\begin{equation*}
    \frac{\alpha}{b+\eta} + \frac{\beta}{b-\eta} \leq \frac{\alpha}{b-\eta} + \frac{\beta}{b+\eta}.
\end{equation*}
We want to replace $\alpha$ by $e^{-b t + \eta s - \eta t}$ and $\beta$ by $e^{-bs}$ and therefore we have to show that $-bt + \eta s - \eta t + bs \geq 0$ which is true because $-bt + \eta s - \eta t + bs = (s-t)(b + \eta) \geq 0$ since $s-t \geq 0$. Filling this into \eqref{eq:expo integral} yields
\begin{equation*}
    \int_t^\infty e^{-b \tau + \eta |\tau - s |} d\tau \leq \frac{e^{-bt}}{b-\eta} e^{\eta|t-s|}, \quad \forall t,s \in \mathbb{R}.
\end{equation*}
Filling this back into \eqref{eq:estimate I+} yields
\begin{equation} \label{eq:I+}
    \|I_{+}(t)\| \leq K_\varepsilon N \|f\|_{\eta,s}\frac{e^{\eta |t-s|}}{b-\eta} < \infty, \quad \forall t \in \mathbb{R}.
\end{equation}
and so we conclude that $I_{+}$ is well-defined. Let $\tau \in [t,\infty)$ be given. By \Cref{hyp:X0+sunstar} we know that $P_+^{\odot \star}(\tau) f(\tau) \in j(X_+(\tau))$ and so
\begin{equation*}
    I_{+}(t) = j \int_t^\infty U_{+}(t,\tau)j^{-1}P_{+}^{\odot \star}(\tau)f(\tau) d\tau \in j(X), \quad \forall t \in \mathbb{R}.
\end{equation*}
As $U^{\odot \star}(t,\tau)$ restricted to $j(X_{0}^{+}(\tau))$ is invertible, we can adjust the proof from \Cref{lemma:wk*integral Usunstar} to prove continuity of the limiting function $v(\cdot,\infty,\cdot,g)$ for a continuous function $g : [t,\infty) \to X^{\odot \star}$ under the assumption that $I_{+}$ is bounded in norm. The fact that $I_{+}$ is bounded in norm follows from \eqref{eq:I+} and the continuity of $g$ holds because $P_+^{\odot \star}$ and $f$ are continuous. \\

\noindent
$I_{-}:$ Notice that
\begin{align*}
    \|I_{-}(t)\| &\leq K_\varepsilon N \|f\|_{\eta,s}e^{at}\int_{-\infty}^t e^{-a\tau + \eta |\tau - s|} d\tau, \quad \forall t \in \mathbb{R},
\end{align*}
where this last integral is closely related to \eqref{eq:expo integral}. A similar calculation shows that
\begin{equation} \label{eq:I-}
    \|I_{-}(t)\| \leq  K_\varepsilon N \|f\|_{\eta,s} \frac{e^{\eta|t - s|}}{-a - \eta} < \infty, \quad \forall t \in \mathbb{R},
\end{equation}
which proves that $I_{-}$ is well-defined. With the notation from \Cref{lemma:wk*integral Usunstar} we have that $I_{-}(t) = v(t,t,-\infty,g)$ with the continuous map $g$ defined as $g (\tau) = P_{-}^{\odot \star}(\tau)f(\tau)$ for all $\tau \in (-\infty,t]$, since $P_{-}^{\odot \star}$ and $f$ are assumed to be continuous. We conclude from this lemma that $I_{-}(t)$ takes values in $j(X)$ for all $t \in \mathbb{R}$ and that $I_{-}$ is continuous.

Due to linearity, we have that $\mathcal{K}_s^\eta f \in C(\mathbb{R},X)$ and combining the estimates \eqref{eq:I0}, \eqref{eq:I+} and \eqref{eq:I-} yield
\begin{equation*}
    \|\mathcal{K}_s^\eta\|_{\eta,s} \leq \|j^{-1}\| K_\varepsilon N \bigg( \frac{1}{\eta - \varepsilon} + \frac{1}{b - \eta} + \frac{1}{-a - \eta} \bigg) < \infty,
\end{equation*}
which implies $\mathcal{K}_s^\eta$ is a bounded linear operator from $\BC_s^\eta(\mathbb{R},X^{\odot \star})$ to $\BC_s^\eta(\mathbb{R},X)$ .

Let us now prove the second assertion by showing first that $\mathcal{K}_s^\eta$ is indeed a solution of \eqref{eq:inhomogeneous CMT}. Let $f \in \BC_s^\eta(\mathbb{R},X)$ and set $u = \mathcal{K}_s^\eta f$. Then, a straightforward computation shows that
\begin{equation*}
    U(t,s)u(s) + j^{-1} \int_s^t U^{\odot \star}(t,\tau)f(\tau) d\tau  = u(t),
\end{equation*}
and so $u$ is indeed a solution of \eqref{eq:inhomogeneous CMT}. Let us now prove that $u$ has vanishing $X_0(s)$-component at time $s$ i.e. $P_0(s)u(s) = 0$. The mutual orthogonality of the projections implies
\begin{align*}
    P_0(s)u(s) &= P_0(s) \bigg( j^{-1} \int_\infty^s U^{\odot \star}(s,\tau)P_{+}^{\odot \star}(\tau)f(\tau) d\tau \\
    &+ j^{-1} \int_{-\infty}^s U^{\odot \star}(s,\tau)P_{-}^{\odot \star}(\tau)f(\tau) d\tau\bigg) \\
    &= j^{-1} \int_\infty^s U^{\odot \star}(s,\tau ) P_0^{\odot \star}(\tau)P_{+}^{\odot \star}(\tau)f(\tau) d\tau \\
    &+ j^{-1} \int_{-\infty}^s  U^{\odot \star}(s,\tau) P_0^{\odot \star}(\tau) P_{-}^{\odot \star}(\tau)f(\tau) d\tau \\
    &= 0.
\end{align*}
It only remains to show that $u$ is the unique solution of \eqref{eq:inhomogeneous CMT} in $\BC_s^\eta(\mathbb{R},X)$. Let $v \in \BC_s^\eta(\mathbb{R},X)$ be another solution of \eqref{eq:inhomogeneous CMT} with vanishing $X_0(s)$-component at time $s$. Then the function $w := u - v$ is an element of $\BC_s^\eta(\mathbb{R},X)$ and satisfies $w(t) = U(t,s)w(s)$ for $(s,t) \in \Omega_\mathbb{R}$. \Cref{prop:X0s} shows us that $w(s) \in X_0(s)$ and notice that $P_0(s)w(s) = 0$ since $u$ and $v$ have both vanishing $X_0(s)$-component at time $s$. From \Cref{hyp:CMT} we know that $w(t) = U_0(t,s)w(s)$ is in $X_0(t)$ for all $t \in \mathbb{R}$. Hence,
\begin{equation*}
    P_0(t)w(t) = P_0(t)U_0(t,s)w(s) = U_0(t,s)P_0(s)w(s) = 0, \quad \forall t \in \mathbb{R},
\end{equation*}
and so $w = 0$ i.e. $u = v$.

Let us now prove the third assertion. Take $f \in  \BC_{s}^0(\mathbb{R},X^{\odot \star})$, then 
\begin{equation*}
    \|(\mathcal{K}_s^0f)(t)\| \leq \|j^{-1}\| K_\varepsilon N \|f\|_{0,s} \bigg( \frac{1}{-a} + \frac{1}{b} \bigg), \quad \forall t \in \mathbb{R}.
\end{equation*}
and because $\mathcal{K}_s^0f$ has vanishing $X_0(s)$-component at time $s$, we get
\begin{equation*}
    \|(I-P_0(t)(\mathcal{K}_s^0f)(t)\| \leq \|j^{-1}\| K_\varepsilon N \|f\|_{0,s} \bigg( \frac{1}{-a} + \frac{1}{b} \bigg),
\end{equation*}
and so $\|(I-P_0(\cdot)(\mathcal{K}_s^0f)(\cdot)\|_{0,s} \leq \|j^{-1}\| K_\varepsilon N \|f\|_{0,s} ( \frac{1}{-a} + \frac{1}{b})$ which shows that $(I-P_0(\cdot)(\mathcal{K}_s^0f)(\cdot)$ is in $\BC_s^0(\mathbb{R},X)$. Because the projections are linear and $\mathcal{K}_s^0$ is linear, we have that $f \mapsto (I-P_0(\cdot)(\mathcal{K}_s^0f)(\cdot)$ is linear. Clearly the operator norm of $f \mapsto (I-P_0(\cdot))(\mathcal{K}_s^0f)(\cdot)$  is bounded above by $\|j^{-1}\| K_\varepsilon N ( \frac{1}{-a} + \frac{1}{b} ) < \infty$ and so this map is bounded, independent of $s$.
\end{proof}

\subsection{Modification of the nonlinearity}
\label{subsec: modification nonlinearity}
To prove the existence of a center manifold, a key step will be to use Banach fixed point theorem on some specific fixed point operator. This operator we will be of course linked to the inhomogeneous equation \eqref{eq:inhomogeneous CMT}. However, we can not expect that any nonlinear operator $R(t,\cdot) : X \to X^{\odot \star}$ for fixed $t \in \mathbb{R}$ will impose a Lipschitz condition on the fixed point operator that will be constructed. As we are only interested in the local behavior of solutions near zero, we can modify the nonlinearity $R(t,\cdot)$ outside a ball of radius $\delta > 0$ such that eventually the fixed point operator will become a contraction. To modify this nonlinearity, introduce the $C^{\infty}$-smooth cut-off function $\xi : [0,\infty) \to \mathbb{R}$ as
\begin{equation*}
    \xi(s) \in
    \begin{cases}
    \{ 1 \}, \quad &0 \leq s \leq 1, \\
    [0,1], \quad &0 \leq s \leq 2,\\
    \{ 0 \}, \quad & s \geq 2,
    \end{cases}
\end{equation*}
and define then for any $\delta > 0$ and $s\in \mathbb{R}$ the \emph{$\delta$-modification} of $R$ as the operator $R_{\delta,s} : \mathbb{R} \times X \to X^{\odot \star}$ with action
\begin{equation*}
   R_{\delta,s}(t,u) := R(t,u) \xi \bigg( \frac{\|P_0(s)u\|}{N \delta} \bigg) \xi \bigg( \frac{\|(P_{-}(s) + P_{+}(s))u\|}{N \delta} \bigg), \quad \forall (t,u) \in \mathbb{R} \times X. 
\end{equation*}
This $\delta$-modification of $R$ will ensure that the nonlinearity is globally Lipschitz. The proof is very similar to that of \cite[Proposition 32]{Janssens2020} and therefore omitted.
\begin{proposition} \label{prop:Lrdelta}
For $s \in \mathbb{R}$ and sufficiently small $\delta > 0$, the operator $R_{\delta,s}(t,\cdot)$ is globally Lipschitz continuous for any $t\in \mathbb{R}$ with Lipschitz constant $L_{R_{\delta}} \to 0$ as $\delta \downarrow 0$ independent of $s$.
\end{proposition}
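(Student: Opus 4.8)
The plan is to follow the autonomous argument of \cite[Proposition 32]{Janssens2020} while carrying along the extra variable $t$ and checking that every constant is uniform in $s$ (and, in the periodic situations we care about, also in $t$). The first step is to locate the support of the $\delta$-modification. Writing $\Phi_{\delta,s}(u) := \xi\!\big(\|P_0(s)u\|/(N\delta)\big)\,\xi\!\big(\|(P_-(s)+P_+(s))u\|/(N\delta)\big)$, so that $R_{\delta,s}(t,\cdot) = R(t,\cdot)\,\Phi_{\delta,s}(\cdot)$, one uses that $\xi$ vanishes on $[2,\infty)$ and that $P_-(s)+P_0(s)+P_+(s)=I$ by \Cref{hyp:CMT} to conclude: if $\Phi_{\delta,s}(u)\neq 0$ then $\|P_0(s)u\|<2N\delta$ and $\|(P_-(s)+P_+(s))u\|<2N\delta$, hence $\|u\| \le \|P_0(s)u\| + \|(I-P_0(s))u\| < 4N\delta$. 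Thus $R_{\delta,s}(t,\cdot)$ is supported in the ball $B(0,4N\delta)$, independently of $s$ and $t$.

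Next I would quantify the smallness of $R$ near the origin from \eqref{eq:nonlinearterms}. For $\|u\|$ small,
\[
    R(t,u) = R(t,u) - R(t,0) - D_2R(t,0)u = \int_0^1 \big[D_2R(t,\theta u) - D_2R(t,0)\big]u\,d\theta ,
\]
so, setting $\omega(r):=\sup_{t,\,\|w\|\le r}\|D_2R(t,w)-D_2R(t,0)\|$, one gets $\|R(t,u)\|\le\|u\|\,\omega(\|u\|)$ and, by the mean value inequality \cite[Corollary 3.2]{Coleman2012}, $\|R(t,u)-R(t,v)\|\le\omega(r)\|u-v\|$ for $u,v\in B(0,r)$. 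Here one needs $\omega(r)\to0$ as $r\downarrow0$: in the autonomous case of \cite{Janssens2020} this is merely continuity of $D_2R$ at the origin, and in the $T$-periodic setting relevant for \eqref{eq:IntroTDDE} it follows from continuity of $D_2R$ together with periodicity in $t$ and compactness of $[0,T]$. For the cut-off factor I would observe that $u\mapsto\|P_0(s)u\|/(N\delta)$ and $u\mapsto\|(P_-(s)+P_+(s))u\|/(N\delta)$ are Lipschitz with constant $\le1/\delta$ (using $\|P_0(s)\|\le N$ and $\|P_-(s)+P_+(s)\|=\|I-P_0(s)\|\le N$), so that $\Phi_{\delta,s}$, a product of the bounded globally Lipschitz function $\xi$ with these, is globally Lipschitz with constant $\le 2L_\xi/\delta$ uniformly in $s$, where $L_\xi:=\|\xi'\|_\infty<\infty$.

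Finally I would assemble the estimate by cases on $u,v$, say with $\|u\|\le\|v\|$. If $\|v\|>5N\delta$ then $\Phi_{\delta,s}(v)=0$, and either $\|u\|>4N\delta$ (both factors vanish, difference zero) or $\|u\|\le4N\delta$, in which case $\|u-v\|\ge N\delta$ and $\|R_{\delta,s}(t,u)-R_{\delta,s}(t,v)\|=\|R(t,u)\Phi_{\delta,s}(u)\|\le\|u\|\,\omega(\|u\|)\le4\,\omega(4N\delta)\|u-v\|$. If $\|v\|\le5N\delta$, split
\[
    R_{\delta,s}(t,u)-R_{\delta,s}(t,v)=\big[R(t,u)-R(t,v)\big]\Phi_{\delta,s}(u)+R(t,v)\big[\Phi_{\delta,s}(u)-\Phi_{\delta,s}(v)\big] ,
\]
bound the first term by $\omega(5N\delta)\|u-v\|$ (mean value inequality on $B(0,5N\delta)$, $|\Phi_{\delta,s}|\le1$) and the second by $\|v\|\,\omega(\|v\|)\cdot(2L_\xi/\delta)\|u-v\|\le10NL_\xi\,\omega(5N\delta)\|u-v\|$. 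Collecting the cases yields a Lipschitz constant $L_{R_\delta}\le C(N,L_\xi)\,\omega(5N\delta)$, independent of $s$ (and of $t$, with $\omega$ as above), which tends to $0$ as $\delta\downarrow0$. I expect the main obstacle to be precisely this last bookkeeping — in particular the configuration with one argument inside and one argument outside the modification ball, where one must play the bound $\|R_{\delta,s}(t,\cdot)\|_\infty=O(\delta\,\omega(\delta))$ against the separation $\|u-v\|\gtrsim\delta$; once the support bound and the two Lipschitz moduli are in hand, the rest is routine.
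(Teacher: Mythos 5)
Your proof is correct and follows exactly the route the paper intends, namely the cut-off argument of \cite[Proposition 32]{Janssens2020} (support of the modification in a ball of radius $O(N\delta)$, the modulus $\omega$ from \eqref{eq:nonlinearterms}, the $O(1/\delta)$ Lipschitz bound on the cut-off factor, and the case split with the separation $\|u-v\|\gtrsim N\delta$ when one argument leaves the support), with all constants uniform in $s$ by part 3 of \Cref{hyp:CMT}. Your remark that the uniformity of $\omega(r)\to 0$ in $t$ is not automatic in the nonautonomous setting and is supplied by $T$-periodicity plus joint continuity of $D_2R$ is a legitimate point that the paper's one-line reference to the autonomous result glosses over.
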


Let us introduce now for a given $\delta$-modification of $R$ the \emph{substitution operator} $\tilde{R}_{\delta,s} : \BC_s^{\eta}(\mathbb{R},X) \to \BC_s^{\eta}(\mathbb{R},X^{\odot \star})$ as
\begin{equation*} 
\tilde{R}_{\delta,s}(u) := R_{\delta,s}(\cdot,u(\cdot)), \quad \forall u \in \BC_s^{\eta}(\mathbb{R},X),
\end{equation*}
and we show that this operator inherits the same properties as $R_{\delta,s}$. The proof is analogous to that of \cite[Corollary 33]{Janssens2020} and therefore omitted.
\begin{corollary} \label{cor:lipschitz2}
For $s \in \mathbb{R}$ and sufficiently small $\delta > 0$, the map  $\tilde{R}_{\delta,s}$ is well-defined, globally Lipschitz continuous with Lipschitz constant $L_{{R}_{\delta}} \to 0$ as $\delta \downarrow 0$ independent of $\eta$ and $s$.
\end{corollary}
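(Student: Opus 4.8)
The plan is to show that the substitution (Nemytskii) operator $\tilde{R}_{\delta,s}$ inherits, uniformly in $t$, the pointwise properties of the $\delta$-modification $R_{\delta,s}(t,\cdot)$ established in \Cref{prop:Lrdelta}, and then to transfer these to the weighted supremum norm $\|\cdot\|_{\eta,s}$ by a routine supremum argument. Concretely, I would check two things: (i) that $\tilde{R}_{\delta,s}$ is well-defined, i.e. that $R_{\delta,s}(\cdot,u(\cdot))$ is continuous and has at most $e^{\eta|t-s|}$-growth whenever $u \in \BC_s^{\eta}(\mathbb{R},X)$; and (ii) that $\tilde{R}_{\delta,s}$ is globally Lipschitz with the asserted constant.

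For well-definedness I would first observe that, for fixed $s$, the map $(t,u) \mapsto R_{\delta,s}(t,u)$ is jointly continuous from $\mathbb{R} \times X$ into $X^{\odot \star}$: it is the product of the $C^k$-smooth (hence norm-continuous) map $R$ with the two scalar factors $\xi(\|P_0(s)u\|/(N\delta))$ and $\xi(\|(P_{-}(s)+P_{+}(s))u\|/(N\delta))$, each continuous in $u$ since $\xi \in C^\infty([0,\infty),\mathbb{R})$, the projections are bounded, and the norm is continuous. Composing with the continuous curve $t \mapsto (t,u(t))$ shows that $t \mapsto R_{\delta,s}(t,u(t))$ is continuous. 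For the growth bound, recall from \eqref{eq:nonlinearterms} that $R(t,0)=0$, hence $R_{\delta,s}(t,0)=0$ for every $t$, so the global Lipschitz estimate of \Cref{prop:Lrdelta}, with constant $L_{R_\delta}$ independent of $t$ and $s$, gives
\begin{equation*}
    \|R_{\delta,s}(t,u(t))\| = \|R_{\delta,s}(t,u(t)) - R_{\delta,s}(t,0)\| \leq L_{R_\delta}\|u(t)\|, \quad \forall t \in \mathbb{R}.
\end{equation*}
Multiplying by $e^{-\eta|t-s|}$ and taking the supremum over $t \in \mathbb{R}$ yields $\|\tilde{R}_{\delta,s}(u)\|_{\eta,s} \leq L_{R_\delta}\|u\|_{\eta,s} < \infty$, so $\tilde{R}_{\delta,s}(u) \in \BC_s^{\eta}(\mathbb{R},X^{\odot \star})$ and $\tilde{R}_{\delta,s}$ is well-defined.

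The global Lipschitz property follows in the same manner: for $u,v \in \BC_s^{\eta}(\mathbb{R},X)$ and any $t \in \mathbb{R}$, \Cref{prop:Lrdelta} gives $\|R_{\delta,s}(t,u(t)) - R_{\delta,s}(t,v(t))\| \leq L_{R_\delta}\|u(t)-v(t)\|$; multiplying by $e^{-\eta|t-s|}$ and taking the supremum gives
\begin{equation*}
    \|\tilde{R}_{\delta,s}(u) - \tilde{R}_{\delta,s}(v)\|_{\eta,s} \leq L_{R_\delta}\|u-v\|_{\eta,s}.
\end{equation*}
Since $L_{R_\delta} \to 0$ as $\delta \downarrow 0$ by \Cref{prop:Lrdelta}, and this bound depends on neither $s$ nor $\eta$ — the exponential weight $e^{-\eta|t-s|}$ appears on both sides of every estimate and cancels — the claim holds for all sufficiently small $\delta > 0$.

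I do not expect a genuine obstacle: this is the standard ``substitution operators inherit pointwise Lipschitz bounds on weighted sup-norm spaces'' lemma. The only points requiring care are the joint continuity of $R_{\delta,s}$ (which rests on $R$ being norm-$C^k$ and $\xi$ being smooth, so that the cut-off factors depend continuously on $u$) and the fact that the Lipschitz constant supplied by \Cref{prop:Lrdelta} is uniform in both $t$ and $s$; without that uniformity the passage to the weighted supremum norm would not produce an $s$-independent bound. The independence of $\eta$ is automatic, since $\eta$ enters only through the exponential weight.
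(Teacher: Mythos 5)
Your argument is correct and is precisely the standard substitution-operator argument that the paper omits by deferring to \cite[Corollary 33]{Janssens2020}: continuity of $t\mapsto R_{\delta,s}(t,u(t))$, the bound $\|R_{\delta,s}(t,u(t))\|\leq L_{R_\delta}\|u(t)\|$ via $R_{\delta,s}(t,0)=0$, and cancellation of the weight $e^{-\eta|t-s|}$ to transfer the pointwise Lipschitz estimate of \Cref{prop:Lrdelta} to the $\|\cdot\|_{\eta,s}$-norm. You also correctly identify the only delicate inputs, namely the uniformity of $L_{R_\delta}$ in $t$ and $s$ and the automatic independence of $\eta$.
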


\subsection{Existence of a Lipschitz center manifold}
\label{subsec: Lipschitz CM}
Our next goal is to define a parameterized fixed point operator such that its fixed points correspond to exponentially bounded solutions on $\mathbb{R}$ of the modified equation
\begin{equation} \label{eq:variation constants Rdelta}
    u(t) = U(t,s)u(s) + j^{-1} \int_s^t U^{\odot \star}(t,\tau)R_{\delta,s}(\tau, u(\tau)) d\tau, \quad -\infty < s \leq t < \infty,
\end{equation}
for some small $\delta > 0$. For a given $\eta \in (0,\min \{-a,b \})$ and $s\in \mathbb{R}$, we define the fixed point operator $ \mathcal{G}_s : \BC_s^\eta(\mathbb{R},X) \times X_0(s) \to \BC_s^\eta(\mathbb{R},X^{\odot \star})$ as
\begin{equation*}
   \mathcal{G}_s(u,\varphi) := U(\cdot,s)\varphi + \mathcal{K}_s^\eta(\tilde{R}_{\delta,s}(u)), \quad \forall (u,\varphi) \in \BC_s^\eta(\mathbb{R},X) \times X_0(s),
\end{equation*}
where its second argument in $X_0(s)$ is treated as a parameter. We first show that $\mathcal{G}_s$ has a unique fixed point and is globally Lipschitz.

\begin{theorem} \label{thm:sol lipschitz}
Let $\eta \in (0,\min \{-a,b \})$ and $s\in \mathbb{R}$ be given. If $\delta > 0$ is sufficiently small, then the following two statements hold.

\begin{enumerate}
    \item For every $\varphi \in X_0(s)$ the equation $ u = \mathcal{G}_s(u,\varphi)$ has a unique solution $u = u_s^\star(\varphi)$.
    \item The map $u_s^\star : X_0(s) \to \BC_s^\eta(\mathbb{R},X)$ is globally Lipschitz and satisfies $u_s^\star(0) = 0$.
\end{enumerate}
\end{theorem}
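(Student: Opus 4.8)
The plan is to apply the uniform contraction principle to the family of maps $u \mapsto \mathcal{G}_s(u,\varphi)$ on the Banach space $\BC_s^{\eta}(\mathbb{R},X)$, treating $\varphi \in X_0(s)$ as a parameter. First I would check that $\mathcal{G}_s$ genuinely maps $\BC_s^{\eta}(\mathbb{R},X) \times X_0(s)$ back into $\BC_s^{\eta}(\mathbb{R},X)$: the term $U(\cdot,s)\varphi$ lies in $\BC_s^{\eta}(\mathbb{R},X)$ by \Cref{prop:X0s} since $\varphi \in X_0(s)$, while $\tilde{R}_{\delta,s}(u) \in \BC_s^{\eta}(\mathbb{R},X^{\odot \star})$ by \Cref{cor:lipschitz2} and $\mathcal{K}_s^{\eta}$ carries that continuously into $\BC_s^{\eta}(\mathbb{R},X)$ by \Cref{prop:ketas}.

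Next comes the contraction estimate. For $u_1,u_2 \in \BC_s^{\eta}(\mathbb{R},X)$ with $\varphi$ fixed, the parameter term cancels and linearity of $\mathcal{K}_s^{\eta}$ gives
\[
\|\mathcal{G}_s(u_1,\varphi) - \mathcal{G}_s(u_2,\varphi)\|_{\eta,s} \leq \|\mathcal{K}_s^{\eta}\| \, L_{R_{\delta}} \, \|u_1 - u_2\|_{\eta,s}.
\]
By \Cref{prop:ketas} there is a constant $C$, independent of $s$, with $\|\mathcal{K}_s^{\eta}\| \leq C$; by \Cref{cor:lipschitz2} we have $L_{R_{\delta}} \to 0$ as $\delta \downarrow 0$, independently of $\eta$ and $s$. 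Hence for $\delta > 0$ small enough, $q := C\,L_{R_{\delta}} < 1$ uniformly in $s$ and in $\eta$ over the admissible range $(0,\min\{-a,b\})$, so each $\mathcal{G}_s(\cdot,\varphi)$ is a $q$-contraction. The uniform contraction principle then yields a unique fixed point $u_s^{\star}(\varphi) \in \BC_s^{\eta}(\mathbb{R},X)$ for every $\varphi \in X_0(s)$, which is assertion~(1).

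For assertion~(2), the identity $u_s^{\star}(0) = 0$ is immediate: from \eqref{eq:nonlinearterms} we have $R(t,0)=0$, so $R_{\delta,s}(t,0)=0$ and $\tilde{R}_{\delta,s}(0)=0$, whence $\mathcal{G}_s(0,0)=U(\cdot,s)0 + \mathcal{K}_s^{\eta}(0)=0$, and uniqueness forces $u_s^{\star}(0)=0$. For the global Lipschitz property, take $\varphi_1,\varphi_2 \in X_0(s)$, set $u_i := u_s^{\star}(\varphi_i)$, and use the fixed point equations to get
\[
\|u_1 - u_2\|_{\eta,s} \leq \|U(\cdot,s)(\varphi_1-\varphi_2)\|_{\eta,s} + q\,\|u_1-u_2\|_{\eta,s}.
\]
Since $\varphi_1-\varphi_2 \in X_0(s)$, the exponential trichotomy of \Cref{hyp:CMT} bounds $\|U(\cdot,s)(\varphi_1-\varphi_2)\|_{\eta,s} = \|U_0(\cdot,s)(\varphi_1-\varphi_2)\|_{\eta,s}$ by $K_{\varepsilon}\|\varphi_1-\varphi_2\|$ for any $\varepsilon \in (0,\eta)$, exactly as in the proof of \Cref{prop:X0s}. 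Rearranging yields $\|u_1-u_2\|_{\eta,s} \leq \tfrac{K_{\varepsilon}}{1-q}\|\varphi_1-\varphi_2\|$, a Lipschitz bound independent of $s$.

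The main obstacle is obtaining a contraction constant $q < 1$ that is uniform in the starting time $s$ — and it has already been cleared by the preparatory results: this is precisely why \Cref{prop:ketas} was stated with an $s$-independent operator-norm bound for $\mathcal{K}_s^{\eta}$ and why \Cref{cor:lipschitz2} records the $s$- and $\eta$-independence of $L_{R_{\delta}}$. With those two facts in hand, the remainder is the standard uniform-contraction bookkeeping sketched above.
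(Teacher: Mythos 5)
Your proposal is correct and follows essentially the same route as the paper: a contraction estimate for $\mathcal{G}_s(\cdot,\varphi)$ obtained from the $s$-independent bound on $\|\mathcal{K}_s^{\eta}\|$ in \Cref{prop:ketas} and the vanishing Lipschitz constant $L_{R_{\delta}}$ from \Cref{cor:lipschitz2}, followed by the standard fixed-point and parameter-Lipschitz bookkeeping. The only cosmetic difference is that the paper folds the $\varphi$-dependence into a single combined estimate before specializing, whereas you separate the contraction step from the Lipschitz step; the resulting constants agree.
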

\begin{proof}
Let $\varepsilon \in (0,\eta) $ be given. Take $u,v \in \BC_s^\eta(\mathbb{R},X)$ and $\varphi,\psi \in X_0(s)$ because then
\begin{align*}
    \|\mathcal{G}_s(u,\varphi) - \mathcal{G}_s(v,\psi)\|_{\eta,s} & \leq \sup_{t \in \mathbb{R}}e^{-\eta|t-s|}\|U_0(t,s)(\varphi - \psi)\| + \|\mathcal{K}_s^\eta\| L_{{R}_{\delta}} \|u-v\|_{\eta,s} \\
    &\leq K_\varepsilon\|\varphi- \psi\| + \|\mathcal{K}_s^\eta\| L_{{R}_{\delta}} \|u-v\|_{\eta,s},
\end{align*}
where we used the fact that $\varepsilon < \eta$ and the exponential trichotomy on the center eigenspace since $\varphi - \psi \in X_0(s)$. By \Cref{cor:lipschitz2} there exists a $\delta_1 > 0$ such that for all $0 < \delta \leq \delta_1$ we have that $ L_{{R}_{\delta}} \|\mathcal{K}_s^\eta\| \leq \frac{1}{2}$. 

1. Set $\psi = \varphi$ in the previous estimate, because then for $0 \leq \delta \leq \delta_1$ we have that
\begin{equation*}
    \|\mathcal{G}_s(u,\varphi) - \mathcal{G}_s(v,\varphi)\|_{\eta,s} \leq \frac{1}{2} \|u-v\|_{\eta,s},
\end{equation*}
which means that $\mathcal{G}_s(\cdot,\varphi)$ is a contraction on the Banach space $\BC_s^\eta(\mathbb{R},X)$ equipped with the $\|\cdot\|_{\eta,s}$-norm. It follows from the contraction mapping principle that $\mathcal{G}_s(\cdot,\varphi)$ has a unique fixed point $u_s^\star(\varphi)$. 

2. Let $u_s^\star(\varphi)$ and $u_s^\star(\psi)$ be unique fixed points of the operators $\mathcal{G}_s(u,\varphi)$ and $\mathcal{G}_s(u,\psi)$ respectively. Then,
\begin{align*}
    \|u_s^\star(\varphi) - u_s^\star(\psi)\|_{\eta,s} &= \|\mathcal{G}_s(u_s^\star(\varphi),\varphi) - \mathcal{G}_s(u_s^\star(\psi),\psi)\|_{\eta,s} \\
    &\leq K_\varepsilon \|\varphi - \psi\| + \frac{1}{2}\|u_s^\star(\varphi) - u_s^\star(\psi)\|_{\eta,s}.
\end{align*}
This implies that $\|u_s^\star(\varphi) - u_s^\star(\psi)\|_{\eta,s} \leq 2 K_\varepsilon \|\varphi - \psi\|$ and so $u^\star_s$ is globally Lipschitz. Since $u_s^\star(0) = \mathcal{G}_s(u_s^\star(0),0) = 0$ the second assertion follows.
\end{proof}

The map $\mathcal{C} : X_0 \to X$ defined by
\begin{equation} \label{eq:map C}
   \mathcal{C}(t,\varphi) := u_t^\star(\varphi)(t), \quad \forall (t,\varphi) \in X_0,
\end{equation}
ensures the existence of a center manifold in the following way.
\begin{definition}
The \emph{global center manifold} for \eqref{eq:variation constants Rdelta} is defined as the image
\begin{equation*}
        \mathcal{W}^c := \mathcal{C}(X_0),
\end{equation*}
whose $s$-fibers are defined as $\mathcal{W}^c(s) := \{\mathcal{C}(s,\varphi) \in X \ : \ \varphi \in X_0(s) \}$.
\end{definition}

\begin{proposition} \label{prop:Wcs karak}
If $\eta \in (0,\min \{-a,b\})$ and $s \in \mathbb{R}$, then
\begin{align*}
    \mathcal{W}^c(s)  =  \{ \varphi \in X    &:    \mbox{there exists a solution of \eqref{eq:variation constants Rdelta} on $\mathbb{R}$ through $\varphi$ belonging to } \BC_{s}^{\eta}(\mathbb{R},X) \}.
\end{align*}
\end{proposition}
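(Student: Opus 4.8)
The plan is to prove the two set inclusions separately, exploiting that the global center manifold is, by construction in \eqref{eq:map C}, parametrized over the center fiber bundle by the Lyapunov--Perron fixed points $u_s^\star(\varphi)$. The guiding identity is that $u=u_s^\star(\varphi)$ is a fixed point of $\mathcal{G}_s(\cdot,\varphi)$ if and only if $u\in\BC_s^\eta(\mathbb{R},X)$ solves the modified equation \eqref{eq:variation constants Rdelta} on $\mathbb{R}$ and has $X_0(s)$-component equal to $\varphi$ at time $s$; once this equivalence is in place, the parameter attached to a prescribed bounded solution $v$ is recovered simply as $P_0(s)v(s)$.

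\emph{First inclusion ($\subseteq$).} Let $\varphi_0\in\mathcal{W}^c(s)$; by \eqref{eq:map C} there is $\varphi\in X_0(s)$ with $\varphi_0=\mathcal{C}(s,\varphi)=u_s^\star(\varphi)(s)$. Put $u:=u_s^\star(\varphi)$, which belongs to $\BC_s^\eta(\mathbb{R},X)$ and satisfies $u=\mathcal{G}_s(u,\varphi)=U(\cdot,s)\varphi+\mathcal{K}_s^\eta(\tilde{R}_{\delta,s}(u))$ by \Cref{thm:sol lipschitz}, with $u(s)=\varphi_0$. The map $g:=\tilde{R}_{\delta,s}(u)=R_{\delta,s}(\cdot,u(\cdot))$ lies in $\BC_s^\eta(\mathbb{R},X^{\odot \star})$ by \Cref{cor:lipschitz2}, and $w:=\mathcal{K}_s^\eta g$ solves the linear inhomogeneous equation \eqref{eq:inhomogeneous CMT} with data $f=g$ on all of $\mathbb{R}$ by \Cref{prop:ketas}(2). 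Since $\varphi\in X_0(s)$, the term $U(\cdot,s)\varphi$ extends to the two-sided orbit $U_0(\cdot,s)\varphi$ via \Cref{hyp:CMT}, and combining $u=U_0(\cdot,s)\varphi+w$ with the cocycle identities for $U_0$ and $U^{\odot \star}$ (pulling $U^{\odot \star}(t,r)$ through the weak$^\star$ integral and re-gluing) shows that $u$ satisfies \eqref{eq:variation constants Rdelta} for every $r\le t$. Hence $u$ is a solution of \eqref{eq:variation constants Rdelta} on $\mathbb{R}$ in $\BC_s^\eta(\mathbb{R},X)$ passing through $\varphi_0=u(s)$.

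\emph{Second inclusion ($\supseteq$).} Suppose $\varphi_0\in X$ admits a solution $v\in\BC_s^\eta(\mathbb{R},X)$ of \eqref{eq:variation constants Rdelta} on $\mathbb{R}$ with $v(s)=\varphi_0$. Set $g:=\tilde{R}_{\delta,s}(v)\in\BC_s^\eta(\mathbb{R},X^{\odot \star})$ by \Cref{cor:lipschitz2}; then $v$ solves \eqref{eq:inhomogeneous CMT} with $f=g$, and so does $w:=\mathcal{K}_s^\eta g$ by \Cref{prop:ketas}(2), now with the additional property $P_0(s)w(s)=0$. Subtracting the two integral equations, $v-w\in\BC_s^\eta(\mathbb{R},X)$ solves the homogeneous equation \eqref{eq:homogeneous CMT} on $\mathbb{R}$, so \Cref{prop:X0s} forces $(v-w)(s)\in X_0(s)$; call this element $\varphi$. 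Then $\varphi=P_0(s)(v-w)(s)=P_0(s)v(s)\in X_0(s)$, and, since $v-w$ is a homogeneous orbit starting in $X_0(s)$, \Cref{hyp:CMT} (invertibility of $U_0$) gives $v-w=U_0(\cdot,s)\varphi=U(\cdot,s)\varphi$ on the whole line. Therefore $v=U(\cdot,s)\varphi+w=U(\cdot,s)\varphi+\mathcal{K}_s^\eta(\tilde{R}_{\delta,s}(v))=\mathcal{G}_s(v,\varphi)$, and by the uniqueness part of \Cref{thm:sol lipschitz} we conclude $v=u_s^\star(\varphi)$; consequently $\varphi_0=v(s)=u_s^\star(\varphi)(s)=\mathcal{C}(s,\varphi)\in\mathcal{W}^c(s)$.

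\emph{Where the difficulty lies.} The substantive content is supplied entirely by the two earlier characterizations: \Cref{prop:ketas}(2), which says $\mathcal{K}_s^\eta$ returns the \emph{unique} exponentially bounded solution of the inhomogeneous equation with no $X_0(s)$-component, and \Cref{prop:X0s}, which says bounded homogeneous orbits live in $X_0(s)$; between them they pin down the parameter and identify $v$ with $u_s^\star(\varphi)$. The only delicate bookkeeping is the meaning of ``solution on $\mathbb{R}$'': one must verify that the fixed-point identity $u=\mathcal{G}_s(u,\varphi)$ — where $\mathcal{K}_s^\eta$ carries integrals from $\pm\infty$ and the homogeneous term is a priori only $U(\cdot,s)\varphi$ for $t\ge s$ — is equivalent to the genuinely two-sided variation-of-constants formula \eqref{eq:variation constants Rdelta} holding for all $r\le t$, and conversely. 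This hinges on \Cref{hyp:CMT}: the restrictions $U_0$ (and $U_+$) are invertible two-sided evolutions, so $U(\cdot,s)\varphi$ extends unambiguously to $U_0(\cdot,s)\varphi$, and the cocycle property of $U^{\odot \star}$ permits splitting and recombining the weak$^\star$ integrals needed to move the base point of the variation-of-constants formula.
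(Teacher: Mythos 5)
Your proposal is correct and follows essentially the same route as the paper: the forward inclusion unwinds the fixed-point identity $u=\mathcal{G}_s(u,\varphi)$ using \Cref{prop:ketas}(2) and the two-sided extension of $U_0(\cdot,s)\varphi$, while the converse identifies $v-U(\cdot,s)P_0(s)v(s)$ with $\mathcal{K}_s^\eta(\tilde{R}_{\delta,s}(v))$ and invokes the uniqueness of the fixed point in \Cref{thm:sol lipschitz}. The only cosmetic difference is that you make the uniqueness step explicit via \Cref{prop:X0s} applied to $v-w$, whereas the paper appeals directly to the uniqueness clause of \Cref{prop:ketas}(2), which rests on the same fact.
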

\begin{proof}
Let $\varphi \in \mathcal{W}^c(s)$, then $\varphi = \mathcal{C}(s,\psi) = u_{s}^\star(\psi)(s)$ for some $\psi \in X_0(s)$. We show that $u = u_{s}^\star(\psi)$ is a solution of \eqref{eq:variation constants Rdelta} on $\mathbb{R}$ through $\varphi$ which belongs to  $\BC_{s}^{\eta}(\mathbb{R},X)$. Part 2 of \Cref{prop:ketas} shows us that $\mathcal{K}_s^\eta \tilde{R}_{\delta,s}(u)$ is the unique solution of \eqref{eq:inhomogeneous CMT} in $\BC_{s}^{\eta}(\mathbb{R},X)$ with $f = \tilde{R}_{\delta,s}(u)$. Because $u = u_{s}^\star(\psi)$ is a fixed point of $\mathcal{G}_s(\cdot,\psi)$ we obtain
\begin{align*}
    u(t) &= U(t,s)\psi + (\mathcal{K}_s^\eta \tilde{R}_{\delta,s}(u))(t) \\
    &= U(t,s)\psi + U(t,s)(\mathcal{K}_s^\eta \tilde{R}_{\delta,s}(u))(s) + j^{-1} \int_s^t U^{\odot \star}(t,\tau) R_{\delta,s}(\tau,u(\tau)) d\tau \\
    &= U(t,s)\psi + U(t,s)(u(s) - \psi) + j^{-1} \int_s^t U^{\odot \star}(t,\tau) R_{\delta,s}(\tau,u(\tau)) d\tau \\
    &= U(t,s)u(s) + j^{-1} \int_s^t U^{\odot \star}(t,\tau) R_{\delta,s}(\tau,u(\tau)) d\tau
\end{align*}
for all $(t,s) \in \Omega_{\mathbb{R}}$. This shows that $u$ is a solution of \eqref{eq:variation constants Rdelta} on $\mathbb{R}$ through $\varphi =  u_s^\star(\psi)(s) = u(s)$ which belongs to $\BC_{s}^{\eta}(\mathbb{R},X)$.

To show the converse, let $\varphi \in X$ be given such that there exists a solution $u$ in $\BC_{s}^{\eta}(\mathbb{R},X)$ of \eqref{eq:variation constants Rdelta} that satisfies $u(s) = \varphi$. For $(t,s) \in \Omega_\mathbb{R}$ it is possible to rewrite \eqref{eq:variation constants Rdelta} as
\begin{align*}
    u(t) &= U(t,s)P_0(s) u(s) + U(t,s)(I-P_0(s))u(s) + j^{-1} \int_s^t U^{\odot \star}(t,\tau) R_{\delta,s}(\tau,u(\tau)) d\tau\\
    &= U(t,s)P_0(s) u(s) + (\mathcal{K}_s^\eta \tilde{R}_{\delta,s}(u))(t)
\end{align*}
where part 2 of \Cref{prop:ketas} was used in the last equality. Hence, if we define $\psi := P_0(s) u(s)$, then
\begin{equation*}
    u(t) = U(t,s)\psi + (\mathcal{K}_s^\eta \tilde{R}_{\delta,s}(u))(t), \quad \forall (t,s) \in \Omega_\mathbb{R},
\end{equation*}
which implies $u = \mathcal{G}_s(u,\psi)$. As we know from \Cref{thm:sol lipschitz} that this fixed point problem has a unique solution $u = u_s^\star(\psi)$, we have that $\varphi = u(s) = u_s^\star(\psi) = \mathcal{C}(s,\psi) \in \mathcal{W}^c(s)$, which completes the proof.
\end{proof}

Recall from part 2 of \Cref{thm:sol lipschitz} that for a fixed $t \in \mathbb{R}$ the map $u_t^\star:X_0(t) \to \BC_t^\eta(\mathbb{R},X)$ is globally Lipschitz. Hence, from the definition of the map $\mathcal{C}$ given in \eqref{eq:map C}, we see that the map $\mathcal{C}(t,\cdot) : X_0(t) \to X$ is globally Lipschitz, where the Lipschitz constant depends on $t$ and so this shows that the map $\mathcal{C}$ is only \emph{fiberwise Lipschitz}. However, it is proven in \Cref{cor:lipschitzCMT} that the Lipschitz constant can be chosen independently of the fiber, and so we can say that $\mathcal{W}^c$ is the \emph{global Lipschitz center manifold}.

Let $B_\delta(X)$ denote the open ball centered around the origin in $X$ with radius $\delta > 0$. From the cut-off function $\xi$, it is clear that the restrictions of $R(t,\cdot)$ and $R_{\delta,s}(t,\cdot)$ to this ball are equal for any $t \in \mathbb{R}$. Hence, if we restrict the unknown function $u$ to take only values in $B_\delta(X)$, then  \eqref{eq:variation constants CMT} and \eqref{eq:variation constants Rdelta} coincide as well.

\begin{definition}
The \emph{local center manifold} $\mathcal{W}_{\loc}^c$ for \eqref{eq:variation constants CMT} is defined as the image
\begin{equation*}
    \mathcal{W}_{\loc}^c := \mathcal{C}(\{ (t,\varphi) \in X_0 \ : \ \mathcal{C}(t,\varphi) \in B_\delta(X) \}).
\end{equation*}
\end{definition}

By construction, the center manifolds $\mathcal{W}^c$ and $\mathcal{W}_{\loc}^c$ non-canonically depend on the choice of $\delta$ and the cut-off function $\xi$. This is the famous non-uniqueness property of the center manifold.

\subsection{Properties of the center manifold} \label{subsec: properties CM}
We will show some important properties that the local center manifold $\mathcal{W}_{\loc}^c$ enjoys. We start off with the following result, that is inspired by {\cite[Theorem 5.4.2]{Church2018}} and \cite[Corollary 38]{Janssens2020}.

\begin{theorem} \label{thm:CMT properties}
The local center manifold $\mathcal{W}_{\loc}^c$ has the following properties.
\begin{enumerate}
    \item $\mathcal{W}_{\loc}^c$ is locally positively invariant: if $(s,\varphi) \in \mathbb{R} \times \mathcal{W}_{\loc}^c$ and $s < t_\varphi \leq \infty$ are such that $S(t,s,\varphi) \in B_\delta(X)$ for all $t \in [s,t_\varphi),$ then $S(t,s,\varphi) \in \mathcal{W}_{\loc}^c$.
    \item $\mathcal{W}_{\loc}^c$ contains every solution of \eqref{eq:variation constants CMT} that exists on $\mathbb{R}$ and remains sufficiently small for all positive and negative time i.e. if $u : \mathbb{R} \to B_\delta(X)$ is a solution of \eqref{eq:variation constants CMT}, then $u(t) \in \mathcal{W}_{\loc}^c$ for all $t\in \mathbb{R}$.
    \item If $(s,\varphi) \in \mathbb{R} \times \mathcal{W}_{\loc}^c$, then $S(t,s,\varphi) = u_t^\star(P_0(t)S(t,s,\varphi))(t) = \mathcal{C}(t,P_0(t)S(t,s,\varphi))$ for all $t \in [s,t_\varphi)$.
    \item $0 \in \mathcal{W}_{\loc}^c$ and $\mathcal{C}(t,0) = 0$ for all $t\in \mathbb{R}$.
\end{enumerate}
\end{theorem}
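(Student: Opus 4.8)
The plan is to derive all four assertions from the fibrewise characterization in \Cref{prop:Wcs karak} and the fixed-point identities of \Cref{thm:sol lipschitz}, together with two elementary observations. The first is that for any $u\in B_\delta(X)$ the two cut-off factors in $R_{\delta,s}$ equal $1$, since $\|P_0(s)u\|\le N\|u\|<N\delta$ and $\|(P_-(s)+P_+(s))u\|\le N\|u\|<N\delta$; consequently, along any curve that stays inside $B_\delta(X)$ the modified equation \eqref{eq:variation constants Rdelta} coincides with \eqref{eq:variation constants CMT}, irrespective of the base point $s$. The second is that for all $s,t,r\in\mathbb{R}$ one has $e^{-\eta|r-t|}\le e^{\eta|t-s|}e^{-\eta|r-s|}$, so $\BC_s^\eta(\mathbb{R},X)=\BC_t^\eta(\mathbb{R},X)$ as sets with equivalent norms; hence membership of a function in such a space does not depend on the reference time. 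With these in hand, assertion (4) is immediate: \Cref{thm:sol lipschitz} gives $u_t^\star(0)=0$, so $\mathcal{C}(t,0)=u_t^\star(0)(t)=0$ for every $t\in\mathbb{R}$, and since $0\in B_\delta(X)$ the pair $(t,0)\in X_0$ lies in the set defining $\mathcal{W}_{\loc}^c$, whence $0\in\mathcal{W}_{\loc}^c$.

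For assertion (2), let $u:\mathbb{R}\to B_\delta(X)$ be a solution of \eqref{eq:variation constants CMT}. By the first observation $u$ is also a solution of \eqref{eq:variation constants Rdelta} on $\mathbb{R}$, and, being bounded by $\delta$, it lies in $\BC_t^\eta(\mathbb{R},X)$ for every $t\in\mathbb{R}$ and every $\eta\in(0,\min\{-a,b\})$. Fixing $t$, the function $u$ is then a solution of \eqref{eq:variation constants Rdelta} on $\mathbb{R}$ through $u(t)$ at time $t$ belonging to $\BC_t^\eta(\mathbb{R},X)$, so \Cref{prop:Wcs karak} with reference time $t$ yields $u(t)\in\mathcal{W}^c(t)$; the converse part of that proof additionally gives $u(t)=\mathcal{C}(t,P_0(t)u(t))\in B_\delta(X)$, so in fact $u(t)\in\mathcal{W}_{\loc}^c$.

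Assertion (1) is the heart of the matter, and the main obstacle is that the semiflow $S$ is defined via the \emph{un}modified equation \eqref{eq:T-AIEphi}, whereas $\mathcal{W}_{\loc}^c$ is built from the \emph{modified} equation \eqref{eq:variation constants Rdelta}; the two must be identified along orbits that stay small. Let $(s,\varphi)$ and $t_\varphi$ be as in the statement; in particular $\varphi\in\mathcal{W}^c(s)\cap B_\delta(X)$. Since $\varphi\in\mathcal{W}^c(s)$, \Cref{prop:Wcs karak} provides a solution $\hat u\in\BC_s^\eta(\mathbb{R},X)$ of \eqref{eq:variation constants Rdelta} on all of $\mathbb{R}$ with $\hat u(s)=\varphi$ (indeed $\hat u=u_s^\star(P_0(s)\varphi)$). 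Because $R_{\delta,s}$ is globally Lipschitz by \Cref{prop:Lrdelta}, the forward initial value problem for \eqref{eq:variation constants Rdelta} has a unique solution, so $\hat u|_{[s,\infty)}$ is \emph{the} solution with this data. On the other hand, on $[s,t_\varphi)$ the orbit $S(\cdot,s,\varphi)$ stays in $B_\delta(X)$ and hence, by the first observation, solves \eqref{eq:variation constants Rdelta} there with the same initial value; uniqueness forces $S(t,s,\varphi)=\hat u(t)$ for all $t\in[s,t_\varphi)$. Now fix such a $t$: $\hat u$ is a solution of \eqref{eq:variation constants Rdelta} on $\mathbb{R}$ through $\hat u(t)$ at time $t$, and it lies in $\BC_s^\eta(\mathbb{R},X)=\BC_t^\eta(\mathbb{R},X)$ by the second observation, so \Cref{prop:Wcs karak} with reference time $t$ gives $\hat u(t)\in\mathcal{W}^c(t)$ and its converse part gives $\hat u(t)=\mathcal{C}(t,P_0(t)\hat u(t))$. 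Since $\hat u(t)=S(t,s,\varphi)\in B_\delta(X)$, this shows $S(t,s,\varphi)\in\mathcal{W}_{\loc}^c$, which is (1).

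Finally, assertion (3) falls out of the chain of equalities just produced: for $(s,\varphi)$ as above and $t\in[s,t_\varphi)$ we have $S(t,s,\varphi)=\hat u(t)=\mathcal{C}(t,P_0(t)\hat u(t))=u_t^\star(P_0(t)\hat u(t))(t)$, and substituting $\hat u(t)=S(t,s,\varphi)$ on the right-hand side yields $S(t,s,\varphi)=u_t^\star(P_0(t)S(t,s,\varphi))(t)=\mathcal{C}(t,P_0(t)S(t,s,\varphi))$, as claimed. The only genuinely delicate point in the whole argument is the uniqueness step identifying $S(\cdot,s,\varphi)$ with $\hat u$ on $[s,t_\varphi)$; everything else is bookkeeping with \Cref{prop:Wcs karak}, \Cref{thm:sol lipschitz}, and the two observations above.
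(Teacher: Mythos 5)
Your proposal is correct and follows essentially the same route as the paper: part 4 from $u_t^\star(0)=0$, part 2 directly from \Cref{prop:Wcs karak}, part 1 by identifying $S(\cdot,s,\varphi)$ with the globally defined bounded solution of \eqref{eq:variation constants Rdelta} via uniqueness, and part 3 from the resulting chain of equalities. You are in fact somewhat more careful than the paper on two points it glosses over — that $R_{\delta,s}=R$ on $B_\delta(X)$ uniformly in the base point $s$, and that $\BC_s^\eta(\mathbb{R},X)=\BC_t^\eta(\mathbb{R},X)$ with equivalent norms so that \Cref{prop:Wcs karak} may be applied at the shifted reference time $t$ — but these are refinements of the same argument, not a different one.
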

\begin{proof}
1. \Cref{prop:Wcs karak} implies that there exists a solution $u \in \BC_s^{\eta}(\mathbb{R},X)$ of \eqref{eq:variation constants Rdelta} through $\varphi$ that can be chosen to be $u(s) = \varphi$. So, $S(\cdot,s,\varphi)$ and $u$ are both solutions of \eqref{eq:variation constants Rdelta} on $[s,t_\varphi)$ and $S(s,s,\varphi) = \varphi = u(s)$. This means $S(\cdot,s,\varphi)$ and $u$ coincide on $[s,t_\varphi)$ by uniqueness of solutions. This means $S(t,s,\varphi) \in \mathcal{W}^c(t) \subset \mathcal{W}^{c}$ for all $t \in [s,t_\varphi)$. Since $\mathcal{W}_{\loc}^{c} = \mathcal{W}^{c} \cap B_\delta(X)$ the result follows. 

2. If $u$ is such a solution, then $u \in \BC_{s}^\eta(\mathbb{R},X)$. The assumption that $u$ takes values in $B_\delta(X)$ and \Cref{prop:Wcs karak} together imply the result. 

3. Since $\varphi \in X_0(s)$ we have that $S(s,s,\varphi)= \varphi = u_s^\star(P_0(s)\varphi)(s) = \mathcal{C}(s,P_0(s)\varphi)$ and so the asserted equation holds at time $t = s$. Since $\mathcal{W}_{\loc}^{c}$ is locally positively invariant, we have that $S(t,s,\varphi) = u_t^\star(\psi(t))(t) \in \mathcal{W}_{\loc}^{c}$ for some $\psi(t) \in X_0(t)$ and we also have $u_t^\star(P_0(t)S(t,s,\varphi))(t) \in \mathcal{W}_{\loc}^{c}$. Because both solutions started at $\varphi$, we must have by uniqueness of solutions that $S(t,s,\varphi) = u_t^\star(P_0(t)S(t,s,\varphi))(t) = \mathcal{C}(t,P_0(t)S(t,s,\varphi))$.

4. Let $t \in \mathbb{R}$ be given. Notice that $\mathcal{C}(t,0) = u_t^\star(0)(t) = 0$, where the last equality follows from part 2 of \Cref{thm:sol lipschitz}. Clearly, $0 = \mathcal{C}(t,0) \in \mathcal{W}_{\loc}^{c}$.
\end{proof}

The next step is to show that the map $\mathcal{C}$ inherits the same order of smoothness as the time-dependent nonlinear perturbation $R$, namely the preselected integer $k \geq 1$. Proving additional smoothness of center manifolds requires work. A well-known technique to achieve smoothness is via the theory of scales of Banach spaces that is presented in \Cref{appendix: smoothness and periodicty}. We refer to \Cref{appendix: smoothness and periodicty} for the statements of the results and additional proofs. The main result is the following, and the proofs can be found in \Cref{cor:tangent} and \Cref{thm:periodic}.
\begin{theorem} \label{thm:smoothness}
The center manifolds $\mathcal{W}^c$ and $\mathcal{W}_{\loc}^c$ are $C^k$-smooth and their tangent bundle is $X_0$ i.e. $D_2 \mathcal{C}(t,0)\varphi = \varphi$ for all $(t,\varphi) \in X_0$. Furthermore, if the time-dependent nonlinear perturbation $R : \mathbb{R} \times X \to X^{\odot \star}$ is $T$-periodic in the first variable, then there exists a $\delta > 0$ such that $\mathcal{C}(t+T,\varphi) = \mathcal{C}(t,\varphi)$ for all $t \in \mathbb{R}$ whenever $\|\varphi\| < \delta$.
\end{theorem}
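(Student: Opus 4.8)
We sketch the strategy; the full details are relegated to \Cref{appendix: smoothness and periodicty} (see \Cref{cor:tangent} and \Cref{thm:periodic}).

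\emph{Set-up for smoothness.} The plan is to obtain $C^k$-smoothness from the contraction-mapping principle on a \emph{scale} of weighted sup-norm spaces \cite{Vanderbauwhede1987}, and then to deduce tangency and periodicity from the fixed-point characterization of \Cref{thm:sol lipschitz} and \Cref{prop:Wcs karak}. First I would fix $\eta>0$ so small that $(k+1)\eta<\min\{-a,b\}$ and consider the scale
\[
\BC_s^{\eta}(\mathbb{R},X)\hookrightarrow \BC_s^{2\eta}(\mathbb{R},X)\hookrightarrow\cdots\hookrightarrow\BC_s^{(k+1)\eta}(\mathbb{R},X),
\]
where each inclusion is bounded (indeed norm-nonincreasing) since $e^{-(j+1)\eta|t-s|}\le e^{-j\eta|t-s|}$. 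One checks that \Cref{prop:ketas} remains valid with $\eta$ replaced by any $\ell\eta$, $1\le\ell\le k+1$, and that the bound on $\|\mathcal{K}_s^{\ell\eta}\|$ can be taken uniform in $\ell$ and $s$; combined with \Cref{cor:lipschitz2} this lets us fix $\delta>0$ so small that $\mathcal{G}_s(\cdot,\varphi)$ is a contraction with factor $\le\tfrac12$ simultaneously on every rung of the scale.

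\emph{Smoothness.} The crux is the \emph{loss-of-exponent} regularity of the substitution operator. Since $R_{\delta,s}$ is $C^k$ with globally bounded derivatives, the mean value inequality in Banach spaces together with the uniform continuity of $D_2R_{\delta,s}$ on bounded sets shows that, for $\ell+1\le k+1$, the map $u\mapsto R_{\delta,s}(\cdot,u(\cdot))$ is $C^1$ as an operator $\BC_s^{\ell\eta}(\mathbb{R},X)\to\BC_s^{(\ell+1)\eta}(\mathbb{R},X^{\odot \star})$ with derivative the substitution operator of $D_2R_{\delta,s}$; the gap between the weights $\ell\eta$ and $(\ell+1)\eta$ is exactly what absorbs the failure of uniformity that would otherwise obstruct differentiability at a fixed exponent. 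Iterating this, $\tilde{R}_{\delta,s}$ is ``$C^k$ relative to the scale''; since $\varphi\mapsto U(\cdot,s)\varphi=U_0(\cdot,s)\varphi$ is bounded linear into each $\BC_s^{\ell\eta}(\mathbb{R},X)$ by the exponential trichotomy of \Cref{hyp:CMT} and $\mathcal{K}_s^{\ell\eta}$ is bounded linear, the parameterized operator $\mathcal{G}_s$ satisfies the hypotheses of the contraction-on-scales theorem \cite{Vanderbauwhede1987} (as used for equilibria in \cite{Diekmann1995} and in the abstract autonomous case in \cite{Janssens2020}). That theorem yields that $u_s^\star:X_0(s)\to\BC_s^{\eta}(\mathbb{R},X)$ is $C^k$; composing with the bounded linear evaluation $\mathrm{ev}_s:\BC_s^\eta(\mathbb{R},X)\to X$ gives $C^k$-smoothness of $\mathcal{C}(s,\cdot)$, uniformly in $s$ thanks to the uniform-in-$s$ estimates (cf.\ \Cref{cor:lipschitzCMT}), hence of $\mathcal{C}$ on the bundle $X_0$ and of $\mathcal{W}^c,\mathcal{W}_{\loc}^c$.

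\emph{Tangency and periodicity.} For the tangent bundle, differentiate the identity $u_s^\star(\varphi)=U(\cdot,s)\varphi+\mathcal{K}_s^\eta\tilde{R}_{\delta,s}(u_s^\star(\varphi))$ at $\varphi=0$: since $u_s^\star(0)=0$ by \Cref{thm:sol lipschitz} and $D_2R_{\delta,s}(t,0)=D_2R(t,0)=0$ by \eqref{eq:nonlinearterms} (the cut-off equals $1$ near the origin, so $R_{\delta,s}=R$ there), the chain rule gives $Du_s^\star(0)\varphi=U_0(\cdot,s)\varphi$ for $\varphi\in X_0(s)$, and evaluating at $t=s$ yields $D_2\mathcal{C}(s,0)\varphi=U_0(s,s)\varphi=\varphi$. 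For periodicity I would use, in addition, that the linear data are $T$-periodic — in the DDE application $L(t)=DF(\gamma_t)$, hence $B$, is $T$-periodic, whence $U(t+T,s+T)=U(t,s)$, $U^{\odot \star}(t+T,s+T)=U^{\odot \star}(t,s)$, and the lifted projectors and fibers satisfy $P_i(t+T)=P_i(t)$, $X_i(t+T)=X_i(t)$. A change of variables $\tau\mapsto\tau+T$ in the three integrals defining $\mathcal{K}$ then gives $(\mathcal{K}_{s+T}^\eta g)(t+T)=(\mathcal{K}_s^\eta g(\cdot+T))(t)$, and $T$-periodicity of $R$ forces $R_{\delta,s+T}(t+T,\cdot)=R_{\delta,s}(t,\cdot)$. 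Fixing $\varphi\in X_0(s)=X_0(s+T)$ and setting $v(t):=u_{s+T}^\star(\varphi)(t+T)$, one checks $\|v\|_{\eta,s}=\|u_{s+T}^\star(\varphi)\|_{\eta,s+T}<\infty$ and, substituting the shifted identities, $v=\mathcal{G}_s(v,\varphi)$; uniqueness in \Cref{thm:sol lipschitz} gives $v=u_s^\star(\varphi)$, so $\mathcal{C}(s,\varphi)=u_s^\star(\varphi)(s)=u_{s+T}^\star(\varphi)(s+T)=\mathcal{C}(s+T,\varphi)$. This identity in fact holds for all $\varphi\in X_0(s)$; restricting to $\|\varphi\|<\delta$ (with $\delta$ chosen as in the smoothness step) gives the stated conclusion for $\mathcal{W}_{\loc}^c$.

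\emph{Main obstacle.} I expect the hard part to be the smoothness: establishing the loss-of-exponent differentiability of $\tilde{R}_{\delta,s}$ along the scale and, more laboriously, verifying that $\mathcal{G}_s$ meets every technical hypothesis of the scales-of-Banach-spaces theorem — uniform contractivity on each rung, continuity of the higher scaled derivatives, and uniform-in-$s$ control so that fiberwise smoothness upgrades to smoothness of $\mathcal{C}$ on $X_0$. By contrast, once the fixed-point characterization and the chain rule are available, tangency and periodicity are short.
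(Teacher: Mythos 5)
Your proposal follows essentially the same route as the paper: smoothness via a contraction on a scale of weighted spaces $\BC_s^{\ell\eta}(\mathbb{R},X)$ combined with the loss-of-exponent differentiability of the substitution operator (the paper's \Cref{lemma: smoothness1}, \Cref{lemma:fixedpointsmooth} and the induction in \Cref{thm:smoothnesscmt}), tangency by differentiating the fixed-point identity at $\varphi=0$ using $D_2R(t,0)=0$ and $u_s^\star(0)=0$ exactly as in \Cref{cor:tangent}, and periodicity by the shift-and-uniqueness argument that the paper delegates to Church--Liu in \Cref{thm:periodic}. The one technical point your sketch glosses over is that the substitution operator is differentiable along the scale only after restricting its domain to the set $V_{\delta,s}^{\sigma}(\mathbb{R},X)$ of functions whose hyperbolic part is bounded in the unweighted norm (\Cref{lemma:P0hat}), which is why the paper applies the smoothness lemma with $Y_0 = V_{\delta,s}^{\eta_{\minn}}(\mathbb{R},X)$ rather than with a full weighted space.
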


To summarize, we have proven the following center manifold theorem in a $T$-periodic setting.

\begin{theorem}[{Local center manifold}] \label{thm:LCMT}
Let $T_0$ be a $\mathcal{C}_0$-semigroup on a $\odot$-reflexive real Banach space $X$ and let $U$ be the strongly forward evolutionary system defined by \eqref{eq:T-LAIEphi} that satisfies \Cref{hyp:CMT} and \Cref{hyp:X0+sunstar}, where $B$ is a $T$-periodic time-dependent bounded linear perturbation. Suppose that the real center eigenspace $X_0(t)$, defined for all $t \in \mathbb{R}$, has dimension $1 \leq n_0 + 1 < \infty$. Furthermore, suppose that the time-dependent nonlinear perturbation $R$ is $T$-periodic in the first component, $C^k$-smooth and satisfies \eqref{eq:nonlinearterms}.

Then there exists a $ \delta > 0$ and a $C^k$-smooth map $\mathcal{C} : X_0 \to X$ such that the manifold $\mathcal{W}_{\loc}^c := \mathcal{C}(\{ (t,\varphi) \in X_0 \ : \ \mathcal{C}(t,\varphi) \in B_\delta(X) \})$ is $T$-periodic, $C^k$-smooth, $(n_0 + 1)$-dimensional and locally positively invariant for the time-dependent semiflow $S$ generated by \eqref{eq:variation constants CMT}.
\end{theorem}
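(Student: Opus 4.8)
The statement collects the results of the preceding subsections, so the proof is mostly a matter of checking that a single radius $\delta$ can be chosen to meet all the smallness requirements simultaneously, and that every constant involved is uniform in the starting time $s$.

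The plan is as follows. Fix once and for all an exponent $\eta\in(0,\min\{-a,b\})$; such $\eta$ exists by the exponential trichotomy in \Cref{hyp:CMT}. By \Cref{prop:Lrdelta} and \Cref{cor:lipschitz2}, for every sufficiently small $\delta>0$ the substitution operator $\tilde R_{\delta,s}$ is globally Lipschitz with a constant $L_{R_\delta}\to 0$ as $\delta\downarrow 0$ that does not depend on $s$ or $\eta$; together with the $s$-uniform bound on $\|\mathcal{K}_s^\eta\|$ from \Cref{prop:ketas}, \Cref{thm:sol lipschitz} then provides a threshold $\delta_1>0$ such that for every $0<\delta\le\delta_1$, every $s\in\mathbb{R}$ and every $\varphi\in X_0(s)$ the operator $\mathcal{G}_s(\cdot,\varphi)$ is a contraction on $\BC_s^\eta(\mathbb{R},X)$ with a uniform contraction factor. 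This yields the globally Lipschitz fixed-point maps $u_s^\star$, the map $\mathcal{C}(t,\varphi):=u_t^\star(\varphi)(t)$ of \eqref{eq:map C}, and hence $\mathcal{W}^c$ and $\mathcal{W}_{\loc}^c$; since $R$ and $R_{\delta,s}$ agree on $B_\delta(X)$, the latter is a local center manifold for \eqref{eq:variation constants CMT}.

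It remains to upgrade the regularity and read off the structural properties, shrinking $\delta$ finitely many times. Smoothness and tangency come from \Cref{thm:smoothness} (proved via the scales-of-Banach-spaces machinery of \Cref{appendix: smoothness and periodicty}): there is $\delta_2\in(0,\delta_1]$ for which $\mathcal{C}$ is $C^k$ and $D_2\mathcal{C}(t,0)=\Id$ on $X_0(t)$. Because $X_0(t)$ is a fixed space of dimension $n_0+1$ and $D_2\mathcal{C}(t,\cdot)$ is continuous with $D_2\mathcal{C}(t,0)=\Id$, after a further shrinking of $\delta$ the map $\mathcal{C}(t,\cdot)$ is an immersion near $0$ in $X_0(t)$, so each fiber $\mathcal{W}_{\loc}^c(t)$ is locally an $(n_0+1)$-dimensional $C^k$-submanifold of $X$ in the sense required by the statement. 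Since $B$ and $R$ are both $T$-periodic, the second part of \Cref{thm:smoothness} gives $\delta_3\in(0,\delta_2]$ with $\mathcal{C}(t+T,\varphi)=\mathcal{C}(t,\varphi)$ for $\|\varphi\|<\delta_3$, i.e. $\mathcal{W}_{\loc}^c$ is $T$-periodic. Local positive invariance for the time-dependent semiflow $S$ generated by \eqref{eq:variation constants CMT} is exactly part 1 of \Cref{thm:CMT properties}. Taking $\delta:=\delta_3$ gives all the claims at once.

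The genuine difficulty is entirely contained in the results being cited. The two points that require real work are the $s$-uniformity of the estimates in \Cref{prop:ketas} and \Cref{prop:Lrdelta} — without it no single $\delta$ would serve all fibers — and the $C^k$-smoothness and $T$-periodicity of $\mathcal{C}$ in \Cref{thm:smoothness}: propagating smoothness through a fixed-point construction on the weighted spaces $\BC_s^\eta$ is delicate because the substitution operator fails to be differentiable between fixed such spaces, and one must instead run the contraction on a scale of them. Granting these, the present theorem is a bookkeeping assembly, and the main obstacle in writing it is simply organising the finitely many choices of $\delta$ so that the final one is admissible for all four conclusions.
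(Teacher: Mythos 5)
Your proposal is correct and matches the paper's approach: the paper states \Cref{thm:LCMT} as a summary of the preceding subsections (\Cref{thm:sol lipschitz}, \Cref{thm:CMT properties}, \Cref{thm:smoothness}) with no separate proof, exactly the bookkeeping assembly you describe. Your additional remarks on the $s$-uniformity of the constants and the finitely many shrinkings of $\delta$ are consistent with how those cited results are established in the text and appendices.
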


\subsection{The special case of classical DDEs} \label{subsec: classical DDEs}
Let us now specify the setting of classical DDEs, such that we can apply \Cref{thm:LCMT}. Choose the Banach space $X := C([-h,0],\mathbb{R}^n)$ as the state space for some finite maximal delay $h > 0$ equipped with the supremum norm $\|\cdot\|_{\infty}$. For a given $k \geq 0$, consider a $C^{k+1}$-smooth operator $F : X \to \mathbb{R}^n$ together with the initial value problem
\begin{equation} \label{eq:DDEphi} \tag{DDE} 
    \begin{cases}
    \dot{x}(t) = F(x_t), \quad &t\geq 0,\\
    x_0 = \varphi, \quad &\varphi \in X,
    \end{cases}
\end{equation}
where the \emph{history of $x$ at time $t \geq 0$}, denoted by $x_t \in X$ is defined as
\begin{equation*}
    x_t(\theta) := x(t+\theta), \quad \forall\theta \in [-h,0].
\end{equation*}
By a \emph{solution} of $\eqref{eq:DDEphi}$ we mean a continuous function $x : [-h,t_{\varphi}) \to \mathbb{R}^n$ for some \emph{final time} $0 < t_{\varphi} \leq \infty$ that is continuously differentiable on $[0,t_{\varphi})$ and satisfies \eqref{eq:DDEphi}. When $t_{\varphi} = \infty$, we call $x$ a \emph{global solution}. We say that a function $\gamma : \mathbb{R} \to \mathbb{R}^n$ is a \emph{periodic solution} of \eqref{eq:DDEphi} if there exists a minimal $T > 0$, called the \emph{period of $\gamma$} such that $\gamma_T = \gamma_0$. We call $\Gamma := \{ \gamma_t \in X \ : \ t \in \mathbb{R} \}$ a \emph{periodic orbit} or \emph{(limit) cycle} in $X$. It follows from \cite[Corollary 10.3.1]{Hale1993} that $\gamma \in C^{k+2}(\mathbb{R},\mathbb{R}^n)$.

We want to study \eqref{eq:DDEphi} near the periodic solution $\gamma$, and it is therefore more convenient to translate $\gamma$ towards the origin. More specifically, if $x$ is a solution of \eqref{eq:DDEphi}, then for $y$ defined as $x = \gamma + y$, we have that $y$ satisfies the nonlinear time-dependent DDE 
\begin{equation} \label{eq:T-DDEphi} \tag{T-DDE}
\dot{y}(t) = L(t)y_t + G(t,y_t),\\
\end{equation}
where $L(t) := DF(\gamma_t)$ denotes the Fr\'echet derivative of $F$ evaluated at $\gamma_t$ and $G(t,\cdot) := F(\gamma_t + \cdot) - F(\gamma_t) - L(t)$ consists of solely nonlinear terms and is of the class $C^{k}$. 

Before we can understand the relation between \eqref{eq:T-DDEphi} and \eqref{eq:T-AIEphi}, we first have to apply the sun-star calculus machinery onto the setting of classical DDEs. The starting point is the \emph{trivial DDE}
\begin{equation} \label{eq:TDDEphi}
    \begin{cases}
    \dot{x}(t) = 0, \quad &t\geq 0,\\
    x_0 = \varphi, \quad &\varphi \in X,
    \end{cases}
\end{equation}
which has the unique global solution
\begin{equation} \label{eq:solutionTDDE}
    x(t) =
    \begin{cases}
    \varphi(t), \quad &-h \leq t \leq 0,\\
    \varphi(0), \quad &t \geq 0.
    \end{cases}
\end{equation}
Using this solution, we define the $\mathcal{C}_0$-semigroup $T_0$ on $X$, also called the \emph{shift semigroup}, as
\begin{equation} \label{eq:shift}
    (T_0(t)\varphi)(\theta) := 
    \begin{cases}
    \varphi(t+\theta), \quad &-h \leq t+ \theta \leq 0,\\
    \varphi(0), \quad &t+\theta \geq 0,
    \end{cases}
    \quad \forall \varphi \in X, \ t \geq 0, \ \theta \in [-h,0].
\end{equation}
Notice that $T_0$ generates the solution of \eqref{eq:solutionTDDE} in the sense that $T_0(t)\varphi = x_t$ for all $t \geq 0$. For this specific combination of $X$ and $T_0$, the abstract duality structure from \Cref{subsec: duality structure} can be constructed explicitly, see \cite[Section II.5]{Diekmann1995}. We only summarize here the basic results.

For $\mathbb{K} \in \{ \mathbb{R}, \mathbb{C} \}$ let $\mathbb{K}^n$ be the linear space of column vectors while $\mathbb{K}^{n \star}$ denotes the linear space of row vectors, both over $\mathbb{K}$. A representation theorem by F. Riesz \cite{Riesz1914} enables us to identify $X^\star = C([-h,0],\mathbb{R}^n)^\star$ with the Banach space $\NBV([0,h],\mathbb{R}^{n\star})$ consisting of functions $\zeta : [0,h] \to \mathbb{R}^{n \star}$ that are normalized by $\zeta(0) = 0$, are continuous from the right on $(0,h)$ and have bounded variation. From \eqref{eq:Xsuncharac} it turns out that
\begin{equation*}
    X^{\odot} \cong \mathbb{R}^{n \star} \times L^1([0,h],\mathbb{R}^{n\star}),
\end{equation*}
where $\cong$ stands for an isometric isomorphism and $\mathbb{R}^{n \star}$ denotes the linear space of row vectors over $\mathbb{R}$. Computing the dual of $X^{\odot}$ and afterwards the restriction to the maximal space of strong continuity yields
\begin{equation*}
    X^{\odot \star} \cong \mathbb{R}^n \times L^\infty([-h,0],\mathbb{R}^n), \quad X^{\odot \odot} \cong \mathbb{R}^n \times C([-h,0],\mathbb{R}^n).
\end{equation*}
The canonical embedding $j$ defined in \eqref{eq: j} has action $j\varphi = (\varphi(0),\varphi)$ for $\varphi \in X$, mapping $X$ onto $X^{\odot \odot}$, meaning that $X$ is $\odot$-reflexive with respect to the shift semigroup $T_0$. 

Let us now specify the time-dependent bounded linear perturbation $B$ from \Cref{subsec: linear perturbation}. For $i = 1,\dots,n$ we denote $r_i^{\odot \star} := (e_i,0)$, where $e_i$ is the $i$th standard basic vector of $\mathbb{R}^n$. It is conventional and convenient to introduce the shorthand notation
\begin{equation*}
    wr^{\odot \star} := \sum_{i = 1}^{n} w_ir_i^{\odot \star}, \quad \forall w = (w_1,\dots,w_n) \in \mathbb{R}^{n},
\end{equation*}
and note that $wr^{\odot \star} = (w,0) \in X^{\odot \star}$. We specify the time-dependent bounded linear perturbation as
\begin{equation} \label{eq:B(t)}
    B(t)\varphi := [L(t)\varphi]r^{\odot \star}, \quad \forall t \in \mathbb{R}, \ \varphi \in X,
\end{equation}
and since $F \in C^{k + 1}(X,\mathbb{R}^n)$, $t \mapsto \gamma_t$ is $T$-periodic and of the class $C^k$, we have that $B \in C^{k}( \mathbb{R}, \mathcal{L}(X,X^{\odot \star}))$ is $T$-periodic and Lipschitz continuous. It is shown in \cite[Theorem 3.1]{Diekmann1995} that there is a one-to-one correspondence between solutions of the time-dependent linear problem
\begin{equation} \label{eq:T-LDDEphi} \tag{T-LDDE}
    \begin{cases}
        \dot{y}(t) =  L(t)y_t, \quad &t \geq s, \\
        y_s = \varphi, \quad &\varphi \in X,
    \end{cases}
\end{equation}
which is \eqref{eq:T-DDEphi} with $G = 0$, and the time-dependent linear abstract integral equation \eqref{eq:T-LAIEphi}. Hence, $y_t = U(t,s)\varphi$ and so $y(t) = (U(t,s)\varphi)(0)$ for all $t \geq s$. 
Let us now specify the time-dependent nonlinear perturbation $R$ from \Cref{subsec: nonlinear perturbation} as
\begin{equation} \label{eq: R}
    R(t,\varphi) := G(t,\varphi)r^{\odot \star}, \quad \forall t \in \mathbb{R}, \ \varphi \in X,
\end{equation}
which is $T$ periodic in the first component and of the class $C^k$. As in the linear case, we have to show that there exists a one-to-one correspondence between solutions of \eqref{eq:T-DDEphi} and \eqref{eq:T-AIEphi}. A proof for this could not be found in the literature, but is given in \Cref{thm:one-to-one T-DDE AIE} with additional preparatory material presented in \Cref{appendix: variation constants}. Hence, the time-dependent semiflow $S$ presented in \eqref{eq:semiflowtimedep} generates solutions of \eqref{eq:T-DDEphi} in the sense that $y_t = S(t,s,\varphi)$ and so $y(t) = S(t,s,\varphi)(0)$ for all $t \in [s,t_\varphi)$. 

We are in the position to verify \Cref{hyp:CMT} and \Cref{hyp:X0+sunstar}. First we have to decompose $X$ in a topological direct sum \eqref{eq:decomposition X hyp}. To do this, define for any $s \in \mathbb{R}$ the \emph{monodromy operator} $U(s+T,s) \in \mathcal{L}(X)$ (at time $s$), and note that iterates of this map are compact, see \cite[Corollary XII.3.4 and Corollary XIII.2.2]{Diekmann1995}. Hence, the spectrum $\sigma(U(s+T,s))$ is a countable set consisting of $0$ and isolated eigenvalues (called \emph{Floquet multipliers}) that can possibly accumulate to $0$.

It is shown in \cite[Theorem3.3]{Diekmann1995} that the Floquet multipliers are independent of the starting time $s$ and thus well-defined. By compactness, there exist two closed $U(s+T,s)$-invariant subspaces of $X$ denoted by $E_\lambda(s)$ and $R_{\lambda}(s)$ such that
\begin{equation*}
    X = E_\lambda(s) \oplus R_{\lambda}(s).
\end{equation*}
The subspace $E_\lambda(s)$ is called the \emph{(generalized) eigenspace} (at time $s$) associated to the Floquet multiplier $\lambda$. This (generalized) eigenspace is defined as the smallest closed linear subspace that contains all $\ker((\lambda I - U(s+T,s))^j)$ for all integers $j \geq 1$. Due to compactness, it turns out that there exists a smallest integer $k_\lambda$ such that $ \cup_{j \in \mathbb{N}} \ker((\lambda I - U(s+T,s))^j) = \ker((\lambda I - U(s+T,s))^{k_\lambda})$ and hence the dimension of the generalized eigenspace $E_\lambda(s)$ is finite and called the \emph{algebraic multiplicity}.

Due to compactness, the sets of Floquet multipliers outside the unit disk $\Lambda_+ := \{ \lambda \in \sigma(U(s+T,s)) \ : \  |\lambda| > 1 \}$ and on the unit circle $\Lambda_0 := \{ \lambda \in \sigma(U(s+T,s)) \ : \ |\lambda| = 1\}$ are both finite. With each of these sets, we define the \emph{unstable eigenspace} (at time $s$) and \emph{center eigenspace} (at time $s$) as
\begin{equation*}
    X_{+}(s) := \bigoplus_{\lambda \in \Lambda_+} E_{\lambda}(s), \quad X_0(s) := \bigoplus_{\lambda \in \Lambda_0} E_{\lambda}(s)
\end{equation*}
respectively and notice that both eigenspaces are finite-dimensional. The \emph{stable eigenspace} (at time $s$) can be defined as
\begin{equation} \label{eq:X-}
    X_{-}(s) := \bigcap_{\lambda \in \Lambda_0 \cup \Lambda_+} R_{\lambda}(s),
\end{equation}
and has finite codimension. From this construction, the unstable-, center- and stable eigenspace are all closed $T$-periodic $U(s+T,s)$-invariant subspaces of $X$. This decomposition is sufficient to prove that \Cref{hyp:CMT} and \Cref{hyp:X0+sunstar} hold in the setting of classical DDEs, presented in this subsection. The verification of both hypotheses is carried out in \Cref{appendix: Spectral DDE} and hence we obtain the following.

\begin{corollary}[Local center manifold for DDEs] \label{cor:CMT DDE}
Consider \eqref{eq:DDEphi} with a $C^{k + 1}$-smooth right-hand side $F : X \to \mathbb{R}^n$ for a fixed $k \geq 0$ and a given $T$-periodic solution $\gamma$. Define the finite rank Lipschitz continuous $T$-periodic time-dependent bounded linear perturbation $B$ as in \eqref{eq:B(t)} together with the time-dependent nonlinear perturbation $R$ as in \eqref{eq: R}, that is $T$-periodic in the first component. Let $U$ denote the strongly continuous forward evolutionary system that generates solutions of \eqref{eq:T-LDDEphi} with $L(t) = DF(\gamma_t)$. Suppose that there are $1 \leq n_0 + 1 < \infty$ Floquet multipliers on the unit circle, counted with algebraic multiplicity, with corresponding $(n_0 + 1)$-dimensional real center eigenspace $X_0(t)$ defined for all $t \in \mathbb{R}$.

Then there exists a $ \delta > 0$ and a $C^k$-smooth map $\mathcal{C} : X_0 \to X$ such that the manifold $\mathcal{W}_{\loc}^c := \mathcal{C}(\{ (t,\varphi) \in X_0 \ : \ \mathcal{C}(t,\varphi) \in B_\delta(X) \})$ is $T$-periodic, $C^k$-smooth, $(n_0 + 1)$-dimensional and locally positively invariant for the time-dependent semiflow $S$ generated by \eqref{eq:T-DDEphi}.
\end{corollary}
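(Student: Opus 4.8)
The plan is to deduce \Cref{cor:CMT DDE} from the abstract center manifold theorem \Cref{thm:LCMT} by verifying, one ingredient at a time, that everything required there is available in the concrete DDE setting of \Cref{subsec: classical DDEs}, and then transporting the conclusion back from \eqref{eq:T-AIEphi} to \eqref{eq:T-DDEphi}.

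First I would collect the structural facts already recorded in \Cref{subsec: classical DDEs}. With $X = C([-h,0],\mathbb{R}^n)$ and the shift semigroup $T_0$ of \eqref{eq:shift}, the duality structure is explicit, $X^{\odot\star}\cong\mathbb{R}^n\times L^\infty([-h,0],\mathbb{R}^n)$, and the embedding $j\varphi=(\varphi(0),\varphi)$ identifies $X$ with $X^{\odot\odot}$, so $X$ is $\odot$-reflexive with respect to $T_0$. The bounded linear perturbation $B$ of \eqref{eq:B(t)} is of finite rank, Lipschitz continuous and $T$-periodic, and it reproduces the linear variational equation \eqref{eq:T-LDDEphi} via $y_t=U(t,s)\varphi$; the nonlinear perturbation $R$ of \eqref{eq: R} is $C^k$-smooth and $T$-periodic in its first argument. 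The one genuinely new check at this level is \eqref{eq:nonlinearterms}, and it is immediate: since $G(t,\cdot)=F(\gamma_t+\cdot)-F(\gamma_t)-L(t)$ with $L(t)=DF(\gamma_t)$, one has $G(t,0)=0$ and $D_2G(t,0)=DF(\gamma_t)-L(t)=0$, hence $R(t,0)=0$ and $D_2R(t,0)=0$ for every $t$.

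The core of the proof is the verification of \Cref{hyp:CMT} and \Cref{hyp:X0+sunstar}. Here I would invoke Floquet theory for the monodromy operator $U(s+T,s)\in\mathcal{L}(X)$: since its iterates are compact, its spectrum consists of $0$ together with isolated eigenvalues of finite algebraic multiplicity, and by \cite[Theorem~3.3]{Diekmann1995} the nonzero ones --- the Floquet multipliers --- are independent of $s$. Splitting $\sigma(U(s+T,s))$ according to whether a multiplier lies outside, on, or inside the unit circle produces the closed, $T$-periodic, $U(s+T,s)$-invariant decomposition $X=X_-(s)\oplus X_0(s)\oplus X_+(s)$ of \eqref{eq:decomposition X hyp}, with $X_0(s),X_+(s)$ finite-dimensional and $X_-(s)$ of finite codimension. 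It then remains to show that the associated Riesz spectral projectors $P_i(\cdot)$ depend continuously on $s$, are uniformly bounded, and commute with $U(t,s)$; that $U_0$ and $U_+$ extend to backward evolutionary systems; and that the exponential trichotomy required by \Cref{hyp:CMT} holds, with the center block growing subexponentially while the stable and unstable blocks decay and grow exponentially. One also has to lift this decomposition to $X^{\odot\star}$ and establish \Cref{hyp:X0+sunstar} --- in fact with the equalities $X_0^{\odot\star}(s)=j(X_0(s))$ and $X_+^{\odot\star}(s)=j(X_+(s))$, which is plausible given the finite-dimensionality of these blocks. All of this is carried out in \Cref{appendix: Spectral DDE}, and I expect the uniform-in-$s$ control of the spectral projectors together with the trichotomy estimates to be the main obstacle.

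Finally I would use the one-to-one correspondence between solutions of \eqref{eq:T-DDEphi} and of \eqref{eq:T-AIEphi} proved in \Cref{thm:one-to-one T-DDE AIE} (with the preparatory material of \Cref{appendix: variation constants}), so that the time-dependent semiflow $S$ of \eqref{eq:semiflowtimedep} generates solutions of \eqref{eq:T-DDEphi} via $y_t=S(t,s,\varphi)$. With $\odot$-reflexivity, the $T$-periodicity, Lipschitz continuity and $C^k$-smoothness of $B$ and $R$, the identities \eqref{eq:nonlinearterms}, \Cref{hyp:CMT}, \Cref{hyp:X0+sunstar}, and $\dim X_0(t)=n_0+1$ all in place, \Cref{thm:LCMT} applies directly and produces $\delta>0$ and a $C^k$-smooth map $\mathcal{C}:X_0\to X$ whose image $\mathcal{W}^c_{\loc}$ is $T$-periodic, $C^k$-smooth, $(n_0+1)$-dimensional and locally positively invariant for $S$ --- which is exactly the assertion of \Cref{cor:CMT DDE}.
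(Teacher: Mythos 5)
Your proposal is correct and follows essentially the same route as the paper: verify $\odot$-reflexivity and the properties of $B$ and $R$ (including \eqref{eq:nonlinearterms}), establish the Floquet decomposition and defer the verification of \Cref{hyp:CMT} and \Cref{hyp:X0+sunstar} to \Cref{appendix: Spectral DDE}, use \Cref{thm:one-to-one T-DDE AIE} to identify the semiflow with solutions of \eqref{eq:T-DDEphi}, and then apply \Cref{thm:LCMT}. This is precisely how the paper obtains the corollary in \Cref{subsec: classical DDEs}.
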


Recall that \eqref{eq:T-DDEphi} was just a time-dependent translation of \eqref{eq:DDEphi} via the given periodic solution. Hence, if $x$ is a solution of \eqref{eq:DDEphi} then $y = x + \gamma$ is a solution of \eqref{eq:T-DDEphi} and so
\begin{equation}
    \mathcal{W}_{\loc}^{c}(\Gamma) := \{ \gamma_t + \mathcal{C}(t, \varphi) \in X \ : \ (t,\varphi) \in X_0 \mbox{ and } \mathcal{C}(t,\varphi) \in B_\delta(X) \}
\end{equation}
is a $T$-periodic $C^k$-smooth $(n_0+1)$-dimensional manifold in $X$ defined in the vicinity of $\Gamma$ for a sufficiently small $\delta > 0$. To see this, recall that $t \mapsto \gamma_t $ is $T$-periodic and $C^k$-smooth together with the fact that $\mathcal{C}$ is $T$-periodic in the first component and $C^k$-smooth (\Cref{cor:CMT DDE}). Recall from \Cref{thm:CMT properties} that $\mathcal{C}(t,0) = 0$ and so $\Gamma \subset \mathcal{W}_{\loc}^{c}(\Gamma)$. We call $\mathcal{W}_{\loc}^{c}(\Gamma)$ a \emph{local center manifold around $\Gamma$} and notice that this manifold inherits all the properties of \Cref{thm:CMT properties}.

\section{Conclusion and outlook} \label{sec:conclusions}
We have proven the existence of a smooth finite-dimensional periodic center manifold near a nonhyperbolic cycle in classical delay differential equations. Due to the broad applicability of the dual perturbation framework, the results apply to a much broader class of evolution equations such as, for example, renewal equations \cite{Diekmann2008,Diekmann1995}. 

In the absence of any delays, \eqref{eq:DDEphi} reduces itself to an ODE defined on the state space $\mathbb{R}^n$. As a consequence, the full sun-star calculus construction becomes trivial and $\mathcal{W}_{\loc}^c(\Gamma)$ is a periodic smooth invariant manifold in $\mathbb{R}^n$ defined near the nonhyperbolic cycle $\Gamma$. Remarkably, no proof of the existence and smoothness of such manifold in ODEs could be found in the literature. This gap is now closed.

The next logical step is to study the dynamics on the center manifold by the means of the standard normal forms. Therefore, in an upcoming article, we will extend the results from Iooss \cite{Iooss1988,Iooss1999} on periodic normal forms for bifurcations of limit cycles in finite-dimensional ODEs towards the setting of infinite-dimensional DDEs. Using these results, we will then derive in a second upcoming article, explicit computational formulas for the critical normal form coefficients for all codimension one bifurcations of limit cycles along the lines of the periodic normalization method \cite{Kuznetsov2005,Witte2013}.

Recall from \Cref{subsec: duality structure} that we assumed $\odot$-reflexivity throughout this article, because we were only interested in the classical DDEs. The interesting question arises how the assumptions and results have to be adapted such that a non-$\odot$-reflexive variant of \Cref{thm:LCMT} still holds. We are already inspired by the work of \cite{Janssens2020} where the existence of a smooth finite-dimensional center manifold near a nonhyperbolic equilibrium has been proven in the non-$\odot$-reflexive setting by means of admissible ranges and perturbations. The existence of a center manifold near a nonhyperbolic cycle would be interesting for studying abstract DDEs, see \cite{Spek2020,Janssens2019,Janssens2020} for examples of such DDEs describing neural fields.

\section*{Acknowledgements}
The authors would like to thank Prof. Odo Diekmann (Utrecht University), Prof. Stephan van Gils (University of Twente) and Dr. Kevin E. M. Church for helpful discussions and suggestions.

\appendix
\section{Spectral decomposition} \label{appendix: spectral}
This appendix consists of two parts. In the first part, we will lift the spectral decomposition (\Cref{hyp:CMT}) from $X$ to $X^{\odot \star}$ and in the second part we show that classical DDEs fulfill the requirements of \Cref{hyp:CMT} and \Cref{hyp:X0+sunstar}.

\subsection{Lifting the spectral decomposition from $X$ to $X^{\odot \star}$} \label{appendix: lifting}
We consider the setting from the preface of \Cref{sec: Existence} and prove that the spectral decomposition on $X$ from \Cref{hyp:CMT} induces a spectral decomposition on $X^\star$, $X^\odot$ and most importantly on $X^{\odot \star}$.

\begin{proposition} \label{prop:Xstar hyp}
Under the assumption of \Cref{hyp:CMT}, the space $X^{\star}$ and the backward evolutionary system $U^\star$ have the following properties:

\begin{enumerate}
    \item $X^{\star}$ admits a direct sum decomposition
    \begin{equation} \label{eq:decompositionXstar}
        X^\star = X^{\star}_{-}(s) \oplus X^{\star}_{0}(s) \oplus X^{\star}_{+}(s), \quad \forall s \in \mathbb{R},
    \end{equation}
    where each summand is closed.
    \item There exist three continuous time-dependent projectors $P_{i}^{\star} : \mathbb{R} \to \mathcal{L}(X^{ \star})$ with $\ran(P_i^{\star}(s)) = X_i^{ \star}(s)$ for any $s \in \mathbb{R}$ and $i \in \{-,0,+\}$. 
    \item There exists a constant $N \geq 0$ such that $\sup_{s \in \mathbb{R}}(\|P_{-}^{ \star}(s)\| + \|P_{0}^{ \star}(s)\| + \|P_{+}^{ \star}(s)\|) = N < \infty$.
    \item The projections are mutually orthogonal meaning that $P_{i}^{ \star}(s)P_j^{ \star}(s) = 0$ for all $i \neq j$ and $s \in \mathbb{R}$ with $i,j \in \{-,0,+\}$.
    \item The projections commute with the backward evolutionary system: $U^{ \star}(s,t)P_i^{ \star}(t) = P_i^{ \star}(s)U^{ \star}(s,t)$ for all $i \in \{-,0,+\}$ and $s \leq t$.
    \item Define the restrictions $U_{i}^{ \star}(s,t) : X_{i}^{ \star}(t) \to X_{i}^{ \star}(s)$ for $i \in \{-,0,+\}$ and $t \geq s$. The operators $U_{0}^{ \star}(s,t)$ and $U_{+}^{ \star}(s,t)$ are invertible and also forward evolutionary systems. Specifically, for any $t,\tau,s \in \mathbb{R}$ it holds
    \begin{equation} \label{eq:U0U+star}
        U_0^{ \star}(s,t) = U_0^{ \star}(s,\tau)U_0^{ \star}(\tau,s), \quad U_+^{ \star}(s,t) = U_+^{ \star}(s,\tau)U_+^{ \star}(\tau,t).
    \end{equation}
    \item The decomposition \eqref{eq:decompositionXstar} is an exponential trichotomy on $\mathbb{R}$ with the same constants as in \Cref{hyp:CMT}.
\end{enumerate}
\end{proposition}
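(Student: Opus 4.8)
The plan is to define everything by taking adjoints: set $P_i^\star(s) := P_i(s)^\star \in \mathcal{L}(X^\star)$ and $X_i^\star(s) := \ran P_i^\star(s)$ for $i \in \{-,0,+\}$, and then verify the seven listed properties one at a time, each by dualizing the corresponding property in \Cref{hyp:CMT}. The only tools needed are the elementary facts that $(ST)^\star = T^\star S^\star$, that the adjoint of a bounded projection is again a bounded projection with closed range, that $T \mapsto T^\star$ is a linear isometry from $\mathcal{L}(X)$ into $\mathcal{L}(X^\star)$ and hence norm-continuous, and that $\|T^\star\| = \|T\|$.

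Properties 1--5 are then essentially formal. Applying the adjoint to $P_-(s)+P_0(s)+P_+(s) = \Id_X$ yields $P_-^\star(s)+P_0^\star(s)+P_+^\star(s) = \Id_{X^\star}$; idempotency of $P_i^\star(s)$ is immediate from $(P_i(s)^2)^\star = (P_i^\star(s))^2$; and $P_i(s)P_j(s)=0$ for $i \neq j$ dualizes to $P_j^\star(s)P_i^\star(s)=0$, which after relabelling gives mutual orthogonality of the adjoint projections. Together with closedness of ranges of bounded projections this produces the direct sum \eqref{eq:decompositionXstar} (property 1) and property 4. Property 2 follows since $s \mapsto P_i(s)$ is norm-continuous and the adjoint map is isometric; property 3 holds with the very same constant $N$ because $\|P_i^\star(s)\| = \|P_i(s)\|$. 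For property 5 I dualize the commuting relation $U(t,s)P_i(s) = P_i(t)U(t,s)$, valid for $t \ge s$, into $P_i^\star(s)U^\star(s,t) = U^\star(s,t)P_i^\star(t)$ for $s \le t$; in particular $U^\star(s,t)$ leaves each $X_i^\star(t)$ invariant, so the restrictions $U_i^\star(s,t) : X_i^\star(t) \to X_i^\star(s)$ of property 6 are well defined.

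The substance is in properties 6 and 7, and the key preliminary is to identify the subspace $X_i^\star(s) = \ran P_i^\star(s) \subseteq X^\star$ with the dual $X_i(s)^\star$ of the closed subspace $X_i(s) \subseteq X$. Factoring $P_i(s) = \iota_i(s)\pi_i(s)$ with $\iota_i(s) : X_i(s) \hookrightarrow X$ the inclusion and $\pi_i(s) : X \to X_i(s)$ the co-restriction (so that $P_i(s) = \iota_i(s)\pi_i(s)$ and $\pi_i(s)\iota_i(s) = \Id_{X_i(s)}$), one checks that $\pi_i(s)^\star : X_i(s)^\star \to X^\star$ is a bounded isomorphism onto $X_i^\star(s)$ whose inverse is the restriction map $\iota_i(s)^\star|_{X_i^\star(s)}$. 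Under these identifications the restriction $U_i^\star(s,t)$ corresponds exactly to the adjoint $U_i(t,s)^\star$ of the operator $U_i(t,s) : X_i(s) \to X_i(t)$ from \Cref{hyp:CMT}, because $\pi_i(s)^\star U_i(t,s)^\star \iota_i(t)^\star = P_i^\star(s)U^\star(s,t)P_i^\star(t) = U^\star(s,t)|_{X_i^\star(t)}$ by the commuting relation. Hence for $i \in \{0,+\}$, invertibility of $U_i(t,s)$ gives invertibility of $U_i^\star(s,t)$, with inverse the correspondingly identified adjoint of $U_i(t,s)^{-1} = U_i(s,t)$; composing these identities and dualizing \eqref{eq:U0U+} yields the forward-evolutionary-system relations \eqref{eq:U0U+star}. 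For property 7 one estimates, for $x^\star \in X_i^\star(t)$,
\[
\|U_i^\star(s,t)x^\star\|_{X^\star} = \sup_{\|\varphi\|_X \le 1}\bigl|\langle x^\star,\, U_i(t,s)P_i(s)\varphi\rangle\bigr| \le \|U_i(t,s)\|\,\|P_i(s)\|\,\|x^\star\|_{X^\star},
\]
so inserting the trichotomy bounds of \Cref{hyp:CMT} reproduces the same exponential rates $a<0<b$; the multiplicative constant carries over up to the uniform factor $N$, which is absorbed into $K_\varepsilon$.

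I expect the only genuine obstacle to be the bookkeeping in the previous paragraph: pinning down the identification $X_i^\star(s) \cong X_i(s)^\star$ and verifying that it intertwines $U_i^\star(s,t)$ with $U_i(t,s)^\star$, so that both the invertibility statement and the exponential estimates really do descend from \Cref{hyp:CMT}. The remaining items are a routine application of the adjoint operation.
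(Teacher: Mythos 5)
Your proposal is correct and follows essentially the same route as the paper: define $P_i^\star(s):=P_i(s)^\star$, obtain properties 1--5 by formally dualizing \Cref{hyp:CMT}, identify $X_i^\star(s)=\ran(P_i(s)^\star)$ with $[X_i(s)]^\star$ (the paper cites \cite[Lemma A.1]{Janssens2020} for this isometric isomorphism) so that $U_i^\star(s,t)$ is the adjoint of $U_i(t,s)$, and then read off invertibility, the relations \eqref{eq:U0U+star}, and the trichotomy bounds. The only cosmetic difference is in property 7, where your estimate carries an extra factor $\|P_i(s)\|\le N$ that you correctly absorb into $K_\varepsilon$, whereas the paper's use of the isometric identification (taking the supremum over unit vectors of $X_i(s)$ only) yields literally the same constants.
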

\begin{proof}
We prove this proposition by separately showing that each statement holds. Throughout the proof, we assume that $s \in \mathbb{R}$ is given. 

1. It follows from part 1 and 2 of \Cref{hyp:CMT} that by taking duals
\begin{equation*}
    X^\star = [\ran(P_{-}(s))]^\star \oplus [\ran(P_0(s))]^\star \oplus [\ran(P_+(s))]^\star.
\end{equation*}
If $i \in \{-,0,+\}$, then it follows from \cite[Lemma A.1]{Janssens2020} that the map $\iota_i(s) : \ran(P_{i}(s)^\star) \to [\ran(P_{i}(s))]^\star$ defined as $\iota_i(s) y^\star = y^\star |_{\ran(P_{i}(s))}$ is an isometric isomorphism and $P_{i}(s)^\star \in \mathcal{L}(X^\star)$. From this isometric isomorphism, the space $[X_{i}(s)]^\star = [\ran(P_{i}(s))]^\star$ can be identified with $\ran(P_{i}^\star(s)) =: X_{i}^{\star}(s)$ where we defined $P_{i}^\star(s) := P_{i}(s) ^\star$ for any $s \in \mathbb{R}$. Because $P_{i}^\star(s)$ has closed range, $X_{i}^\star(s)$ is closed. 

2. It only remains to show that $P_{i}$ is continuous for each $i \in \{-,0,+\}$. Consider $h \in \mathbb{R}$, then
\begin{equation*}
    \|P_{i}^\star(s+h) - P_{i}^\star(s)\| = \|[P_{i}(s+h) - P_{i}(s)]^\star\| = \|P_{i}(s+h) - P_{i}(s)\| \to 0, \mbox{ as } h \to 0,
\end{equation*}
because $P_i$ is continuous by part 2 of \Cref{hyp:CMT}.

3. Since $\|P_i^\star(s)\| = \|P_i(s)^\star\| = \|P_i(s)\|$ we have that part 3 holds with the same constant $N$ as in part 3 \Cref{hyp:CMT}.

4. Let $i \neq j$, then $P_i^\star(s)P_j^\star(s) = P_i(s)^\star P_j(s)^\star = (P_j(s) P_i(s))^\star = 0$ because $P_j(s) P_i(s) = 0$ due to part 4 of \Cref{hyp:CMT}.

5. Notice that for any $s \leq t$ we have that
\begin{equation*}
    U^\star(s,t)P_{i}^\star(t) = (P_i(t)U(t,s))^\star = (U(t,s)P_i(s))^\star = P_{i}^{\star}(s) U(t,s)^\star = P_{i}^{\star}(s) U^\star(s,t),
\end{equation*}
where we used part 5 of \Cref{hyp:CMT} in the third equality.

6. The restrictions are well-defined. Because $U_0(t,s)$ and $U_{+}(t,s)$ are invertible we also have that $U_0^\star(s,t) = U_0(t,s)^\star$ and $U_{+}^\star(s,t)^\star = U_{+}(t,s)^\star$ are invertible and so forward evolutionary systems. Let us now prove \eqref{eq:U0U+star}. Let $t,\tau,s \in \mathbb{R}$ be given, then
\begin{align*}
    U_0^\star(s,t) = U_0(t,s)^\star = (U_0(t,\tau)U_0(\tau,s))^\star = U_0(\tau,s)^\star U_0(t,\tau)^\star = U_0^\star(s,\tau) U_0^\star(\tau,s),
\end{align*}
where we used \eqref{eq:U0U+} in the second equality. The proof for $U_+^\star$ is analogous.

7. Let $i = -$ and suppose that $t \geq s$. Let $x^\star \in X_{i}^\star(s) = \ran(P_{i}^\star(s))$ be given. Since $\iota_i(t)$ is an isometry for any $t \in \mathbb{R}$, 
\begin{equation*}
    \|U^\star(s,t)x^\star\| = \|\iota_i(t)[U^\star(s,t)x^\star]\| =  \sup_{ \substack{x \in X_{i}(s) \\ \|x\| \leq 1}} |\langle U_i(t,s)x, x^\star \rangle |  \leq \|U_i(t,s)\| \ \|x^\star\|.
\end{equation*}
Taking the supremum over all $x^\star$ that satisfies $\|x^\star\| \leq 1$ we obtain $\|U_i^\star(s,t)\| \leq \|U_i(t,s)\|$ and this last part can be bounded by one of the three estimates in part 7 of \Cref{hyp:CMT}. The cases for $i \in \{0,+ \}$ are analogous. This completes the proof.
\end{proof}

\begin{proposition} \label{prop:Xsun hyp}
Under the assumption of \Cref{hyp:CMT}, the space $X^{\odot}$ and the backward evolutionary system $U^\odot$ have the following properties:

\begin{enumerate}
    \item $X^{\odot}$ admits a direct sum decomposition
    \begin{equation} \label{eq:decompositionXsun}
        X^\odot = X^{\odot}_{-}(s) \oplus X^{\odot}_{0}(s) \oplus X^{\odot}_{+}(s), \quad \forall s \in \mathbb{R},
    \end{equation}
    where each summand is closed.
    \item There exist three continuous time-dependent projectors $P_{i}^{\odot} : \mathbb{R} \to \mathcal{L}(X^{ \odot})$ with $\ran(P_i^{\odot}(s)) = X_i^{ \odot}(s)$ for any $s \in \mathbb{R}$ and $i \in \{-,0,+\}$. 
    \item There exists a constant $N \geq 0$ such that $\sup_{s \in \mathbb{R}}(\|P_{-}^{ \odot}(s)\| + \|P_{0}^{ \odot}(s)\| + \|P_{+}^{ \odot}(s)\|) = N < \infty$.
    \item The projections are mutually orthogonal meaning that $P_{i}^{ \odot}(s)P_j^{ \odot}(s) = 0$ for all $i \neq j$ and $s \in \mathbb{R}$ with $i,j \in \{-,0,+\}$.
    \item The projections commute with the backward evolutionary system: $U^{ \odot}(s,t)P_i^{ \odot}(t) = P_i^{ \odot}(t)U^{ \odot}(s,t)$ for all $i \in \{-,0,+\}$ and $s \leq t$.
    \item Define the restrictions $U_{i}^{ \odot}(s,t) : X_{i}^{ \odot}(t) \to X_{i}^{ \odot}(s)$ for $i \in \{-,0,+\}$ and $t \geq s$. The operators $U_{0}^{ \odot}(s,t)$ and $U_{+}^{ \odot}(s,t)$ are invertible and also forward evolutionary systems. Specifically, for any $t,\tau,s \in \mathbb{R}$ it holds
    \begin{equation} \label{eq:U0U+sun}
        U_0^{ \odot}(s,t) = U_0^{ \odot}(s,\tau)U_0^{ \odot}(\tau,s), \quad U_+^{ \odot}(s,t) = U_+^{ \odot}(s,\tau)U_+^{ \odot}(\tau,t).
    \end{equation}
    \item The decomposition \eqref{eq:decompositionXsun} is an exponential trichotomy on $\mathbb{R}$ with the same constants as in \Cref{hyp:CMT}.
\end{enumerate}
\end{proposition}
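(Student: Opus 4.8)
The plan is to deduce all seven statements by restricting the objects of \Cref{prop:Xstar hyp} to the norm-closed subspace $X^\odot\subseteq X^\star$. Everything hinges on one invariance claim, which I would establish first: for every $s\in\mathbb{R}$ and every $i\in\{-,0,+\}$, the dual spectral projector $P_i^\star(s)$ maps $X^\odot$ into itself. Once this is available, one sets $P_i^\odot(s):=P_i^\star(s)|_{X^\odot}\in\mathcal{L}(X^\odot)$ and $X_i^\odot(s):=\ran P_i^\odot(s)=X_i^\star(s)\cap X^\odot$, and the remaining work is bookkeeping.

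To prove the invariance claim, fix $s$ and $x^\odot\in X^\odot$. A standard perturbation estimate on the integral equation \eqref{eq:T-LAIEphi} gives $\|U^\star(\sigma,s)-T_0^\star(s-\sigma)\|\to0$ as $\sigma\uparrow s$, whence $X^\odot$ coincides with $\{x^\star\in X^\star:\lim_{\sigma\uparrow s}\|U^\star(\sigma,s)x^\star-x^\star\|=0\}$ (the sun-level analogue of the characterisation of $X^{\odot\odot}=j(X)$ exploited inside the proof of \Cref{lemma:wk*integral Usunstar}). Using the intertwining relation $U^\star(\sigma,s)P_i^\star(s)=P_i^\star(\sigma)U^\star(\sigma,s)$ from part~5 of \Cref{prop:Xstar hyp}, the uniform bound $\|P_i^\star(\sigma)\|\le N$ from part~3, the norm-continuity of $P_i^\star$ from part~2, and the strong continuity of the backward evolutionary system $U^\odot=U^\star|_{X^\odot}$, one obtains for $\sigma\uparrow s$
\[
U^\star(\sigma,s)P_i^\star(s)x^\odot-P_i^\star(s)x^\odot=P_i^\star(\sigma)\bigl(U^\odot(\sigma,s)x^\odot-x^\odot\bigr)+\bigl(P_i^\star(\sigma)-P_i^\star(s)\bigr)x^\odot\ \longrightarrow\ 0
\]
in norm, so $P_i^\star(s)x^\odot\in X^\odot$.

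Granting the claim, parts~1--5 are immediate: the $P_i^\odot(s)$ are bounded, mutually orthogonal projections on $X^\odot$ summing to the identity, so $X^\odot=X_-^\odot(s)\oplus X_0^\odot(s)\oplus X_+^\odot(s)$ with closed summands $X_i^\odot(s)=X_i^\star(s)\cap X^\odot$; they are norm-continuous in $s$ and uniformly bounded because restriction does not enlarge operator norms; and their intertwining with $U^\odot$ is the restriction of part~5 of \Cref{prop:Xstar hyp}. Part~7 is inherited from part~7 of \Cref{prop:Xstar hyp} since $\|U_i^\odot(s,t)\|\le\|U_i^\star(s,t)\|$. For part~6, recall that $U^\star(s,t)$ leaves $X^\odot$ invariant for $s\le t$ \cite[Theorem~5.3]{Clement1988}; hence $U_0^\odot(s,t):=U_0^\star(s,t)|_{X_0^\odot(t)}$ is a well-defined injective map $X_0^\odot(t)\to X_0^\odot(s)$, likewise $U_+^\odot$, and the group identities \eqref{eq:U0U+sun} descend verbatim from \eqref{eq:U0U+star}.

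The only delicate point — and the one I expect to be the main obstacle — is surjectivity of $U_0^\odot(s,t)$ and $U_+^\odot(s,t)$: their inverses exist on $X_0^\star(s)$ and $X_+^\star(s)$ by part~6 of \Cref{prop:Xstar hyp}, but $U^\star$ is only known to preserve $X^\odot$ in its defined direction $s\le t$, so one must check separately that these inverses return $X^\odot$ into $X^\odot$. I would argue: given $y^\odot\in X_0^\odot(s)$, set $x:=(U_0^\star(s,t))^{-1}y^\odot\in X_0^\star(t)$; the group property yields $U^\star(\sigma,t)x=U_0^\star(\sigma,s)y^\odot$ for all $\sigma\le t$, which for $\sigma\le s$ equals $U^\odot(\sigma,s)y^\odot\in X^\odot$ and is norm-continuous in $\sigma$; propagating this norm-continuity up to $\sigma=t$ (using that $\sigma\mapsto U_0^\star(s,\sigma)$ is a strongly continuous family of invertibles whose inverses are locally norm-bounded by the center estimate of the trichotomy) and invoking the characterisation of $X^\odot$ above gives $x\in X^\odot$; the unstable case is the same. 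In the settings that actually arise here the obstacle disappears: whenever $X_0$ is finite-dimensional (\Cref{thm:LCMT}) and for classical DDEs (\Cref{appendix: Spectral DDE}), the generalised eigenvectors spanning $X_0^\star(s)$ and $X_+^\star(s)$ already lie in $\mathcal{D}(A_0^\star)\subseteq X^\odot$, so $X_0^\star(s)=X_0^\odot(s)$, $X_+^\star(s)=X_+^\odot(s)$, and invertibility of $U_0^\odot$, $U_+^\odot$ is inherited at once from that of $U_0^\star$, $U_+^\star$.
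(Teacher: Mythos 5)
Your proposal follows the same route as the paper's proof: define $P_i^\odot(s):=P_i^\star(s)|_{X^\odot}$ and $X_i^\odot(s)=X_i^\star(s)\cap X^\odot$, then transfer all seven properties from \Cref{prop:Xstar hyp} by restriction. You are in fact more careful than the paper on the two genuine subtleties, namely the invariance of $X^\odot$ under $P_i^\star(s)$ (which the paper asserts with only a terse cross-reference, whereas your argument via the intertwining relation, the uniform bound and norm-continuity of $P_i^\star$, and the characterisation of $X^\odot$ as the subspace of strong continuity is complete) and the surjectivity of the restricted operators $U_0^\odot(s,t)$ and $U_+^\odot(s,t)$ (which the paper dismisses as following immediately from the restriction); your observation that in the intended applications $X_0^\star(s)$ and $X_+^\star(s)$ are finite-dimensional and already contained in $X^\odot$ settles the latter point in any case.
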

\begin{proof}
Let $s \in \mathbb{R}$ and $i \in \{-,0,+\}$ be given. Notice directly that the Lipschitz continuity of $B$ implies that $U^\odot(s,t)$ is well-defined and $X^\odot$-invariant. We define for any $s$ the map $P_i^\odot(s) := P_i^\star(s) |_{X^\odot}$ and notice that part 6 of \Cref{prop:Xstar hyp} implies that $P_i^\star(s)$ maps $X^\odot$ into itself. We denote the range of $P_i^\odot(s)$ by $X_i^\odot(s)$ and it is clear that
\begin{equation} \label{eq:Xisun}
    X_i^\odot(s) = X_i^\star(s) \cap X^\odot.
\end{equation}
Let us now prove the seven assertions.

1. Notice that $X_i^\odot(s)$ is closed because $X_i^\star(s)$ is closed (part 1 of \Cref{prop:Xstar hyp}) and $X^\odot$ is closed. The result follows from \eqref{eq:Xisun}.

2. As $X^\odot$ is a subspace of $X^\star$, we have for any $h \in \mathbb{R}$ that
\begin{equation*}
    \|P_{i}^\odot(s+h) - P_{i}^\odot(s)\| = \|[P_{i}(s+h) - P_{i}(s)]^\odot\| \leq \|[P_{i}(s+h) - P_{i}(s)]^\star\| \to 0, \mbox{ as } h \to 0,
\end{equation*}
due to part 2 of \Cref{prop:Xstar hyp}. Hence, $P_i^\odot$ is continuous.

3. This follows from part 3 of \Cref{prop:Xstar hyp} because $\|P_{i}^\odot(s)\| \leq \|P_{i}^\star(s)\|$ due to the restriction.

4. This follows from part 4 of \Cref{prop:Xstar hyp} due to the restriction.

5. This claim follows from part 4 of \Cref{prop:Xstar hyp} and recalling the fact that $U^\odot(s,t)$ is $X^\odot$-invariant.

6. For the well-definedness of the restriction, we have to check that $U_i^\odot(s,t)$ takes values in $X_i^\odot(s)$. Since $U_i^\odot(s,t) = U_i^\star(s,t) |_{X^\odot}$ we get from part 6 of \Cref{prop:Xstar hyp} that $U_i^\odot(s,t)$ maps into $X_i^\star(s)$. Because $U^\odot(s,t)$ is $X^\odot$-invariant we also have that the restriction $U_i^\odot(s,t)$ is $X^\odot$-invariant and so $U_i^\odot(s,t)$ takes values in $X^\odot$. To conclude, $U_i^\odot(s,t)$ takes values in $X_i^\star(s) \cap X^\odot = X_i^\odot(s)$ by \eqref{eq:Xisun}. The remaining claims follow immediately because of the restriction. 

7. Because of the restriction we have that $\|U_{i}^\odot(s,t)\| = \|U_i(t,s)^\odot\| \leq \|U_i(t,s)^\star\| = \|U_i^\star(s,t)\|$ and the right-hand side can now be estimated by the upper bounds given in part 7 of \Cref{prop:Xstar hyp}.
\end{proof}

\begin{proposition} \label{prop:Xsunstar hyp}
Under the assumption of \Cref{hyp:CMT}, the space $X^{\odot \star}$ and the forward evolutionary system $U^{\odot \star}$ have the following properties:

\begin{enumerate}
    \item $X^{\odot \star}$ admits a direct sum decomposition
    \begin{equation} \label{eq:decomposition Xsunstar hyp}
        X^{\odot \star} = X^{\odot \star}_{-}(s) \oplus X^{\odot \star}_{0}(s) \oplus X^{\odot \star}_{+}(s), \quad \forall s \in \mathbb{R},
    \end{equation}
    where each summand is closed.
    \item There exist three continuous time-dependent projectors $P_{i}^{\odot \star} : \mathbb{R} \to \mathcal{L}(X^{\odot \star})$ with $\ran(P_i^{\odot \star}(s)) = X_i^{\odot \star}(s)$ for any $s \in \mathbb{R}$ and $i \in \{-,0,+\}$. 
    \item There exists a constant $N \geq 0$ such that $\sup_{s \in \mathbb{R}}(\|P_{-}^{\odot \star}(s)\| + \|P_{0}^{\odot \star}(s)\| + \|P_{+}^{\odot \star}(s)\|) = N < \infty$.
    \item The projections are mutually orthogonal meaning that $P_{i}^{\odot \star}(s)P_j^{\odot \star}(s) = 0$ for all $i \neq j$ and $s \in \mathbb{R}$ with $i,j \in \{-,0,+\}$.
    \item The projections commute with the forward evolutionary system: $U^{\odot \star}(t,s)P_i^{\odot \star}(s) = P_i^{\odot \star}(t)U^{\odot \star}(t,s)$ for all $i \in \{-,0,+\}$ and $t \geq s$.
    \item Define the restrictions $U_{i}^{\odot \star}(t,s) : X_{i}^{\odot \star}(s) \to X_{i}^{\odot \star}(t)$ for $i \in \{-,0,+\}$ and $t \geq s$. The operators $U_{0}^{\odot \star}(t,s)$ and $U_{+}^{\odot \star}(t,s)$ are invertible and also backward evolutionary systems. Specifically, for any $t,\tau,s \in \mathbb{R}$ it holds
    \begin{equation*}
        U_0^{\odot \star}(t,s) = U_0^{\odot \star}(t,\tau)U_0^{\odot \star}(\tau,s), \quad U_+^{\odot \star}(t,s) = U_+^{\odot \star}(t,\tau)U_+^{\odot \star}(\tau,s).
    \end{equation*}
    \item The decomposition \eqref{eq:decomposition Xsunstar hyp} is an exponential trichotomy on $\mathbb{R}$ with the same constants as in \Cref{hyp:CMT}.
\end{enumerate}
\end{proposition}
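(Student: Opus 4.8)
The strategy is to run, one final time, the same duality procedure that produced \Cref{prop:Xstar hyp} and \Cref{prop:Xsun hyp}: all of the structure on $X^{\odot \star}$ should be obtained by dualizing the structure already established on $X^{\odot}$ (together with the strongly continuous backward evolutionary system $U^{\odot}$) in \Cref{prop:Xsun hyp}, using that $X^{\odot \star} = (X^{\odot})^{\star}$ and $U^{\odot \star}(t,s) = (U^{\odot}(s,t))^{\star}$. Accordingly, for $i \in \{-,0,+\}$ and $s \in \mathbb{R}$ I would set
\begin{equation*}
    P_i^{\odot \star}(s) := (P_i^{\odot}(s))^{\star} \in \mathcal{L}(X^{\odot \star}), \qquad X_i^{\odot \star}(s) := \ran(P_i^{\odot \star}(s)),
\end{equation*}
and then verify the seven assertions one at a time, mirroring the proof of \Cref{prop:Xstar hyp}.

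For assertion~1, I would dualize the topological direct sum of parts~1--2 of \Cref{prop:Xsun hyp}: taking adjoints gives $X^{\odot \star} = [\ran P_-^{\odot}(s)]^{\star} \oplus [\ran P_0^{\odot}(s)]^{\star} \oplus [\ran P_+^{\odot}(s)]^{\star}$, and \cite[Lemma A.1]{Janssens2020} supplies, for each $i$, an isometric isomorphism $\iota_i(s) : \ran((P_i^{\odot}(s))^{\star}) \to [\ran P_i^{\odot}(s)]^{\star}$; hence each $X_i^{\odot \star}(s)$ is closed and \eqref{eq:decomposition Xsunstar hyp} holds. Assertions~2--5 are immediate transpositions: $\|P_i^{\odot \star}(s+h) - P_i^{\odot \star}(s)\| = \|P_i^{\odot}(s+h) - P_i^{\odot}(s)\| \to 0$ gives continuity; $\|P_i^{\odot \star}(s)\| = \|P_i^{\odot}(s)\|$ keeps the uniform bound $N$; $(P_i^{\odot}(s))^{\star}(P_j^{\odot}(s))^{\star} = (P_j^{\odot}(s)P_i^{\odot}(s))^{\star} = 0$ for $i \neq j$ gives mutual orthogonality; and for the commutation one computes, for $t \geq s$,
\begin{equation*}
    U^{\odot \star}(t,s)P_i^{\odot \star}(s) = (P_i^{\odot}(s)U^{\odot}(s,t))^{\star} = (U^{\odot}(s,t)P_i^{\odot}(t))^{\star} = P_i^{\odot \star}(t)U^{\odot \star}(t,s),
\end{equation*}
the middle equality being the commutation relation of part~5 of \Cref{prop:Xsun hyp}. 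For assertion~6, the restrictions $U_i^{\odot \star}(t,s) := (U_i^{\odot}(s,t))^{\star}$ are well defined on $X_i^{\odot \star}$ by this commutation relation; since $U_0^{\odot}(s,t)$ and $U_+^{\odot}(s,t)$ are invertible forward evolutionary systems by part~6 of \Cref{prop:Xsun hyp}, their adjoints are invertible backward evolutionary systems, and dualizing \eqref{eq:U0U+sun} (adjoints reverse the order of composition) yields the stated cocycle identities. For assertion~7, the isometry $\iota_i$ gives $\|U_i^{\odot \star}(t,s)\| \leq \|U_i^{\odot}(s,t)\|$, and the right-hand side is controlled by the trichotomy estimates of part~7 of \Cref{prop:Xsun hyp}, which carry the same constants $a<0<b$ and $K_\varepsilon$; hence \eqref{eq:decomposition Xsunstar hyp} is an exponential trichotomy with those constants.

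Since every step is a mechanical dualization, I do not expect a genuine obstacle, only two points that demand care. First, one must track the reversal of time-arguments and of composition order under taking adjoints, so that $U^{\odot \star}(t,s) = (U^{\odot}(s,t))^{\star}$ correctly turns the backward evolutionary system $U^{\odot}$ into a forward one and flips the cocycle and commutation identities to the right time labels. Second, the trichotomy bounds for the restricted operators $U_i^{\odot \star}(t,s)$ cannot be read off from the identity $\|A^{\star}\| = \|A\|$ directly, because one restricts to the invariant summands; the isometric embedding $\iota_i(t)$ of $\ran((P_i^{\odot}(t))^{\star})$ into $[X_i^{\odot}(t)]^{\star}$ is precisely what licenses the inequality $\|U_i^{\odot \star}(t,s)\| \leq \|U_i^{\odot}(s,t)\|$, exactly as in the proof of part~7 of \Cref{prop:Xstar hyp}.
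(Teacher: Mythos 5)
Your proposal is correct and follows essentially the same route as the paper: the paper's proof simply says to apply \Cref{prop:Xstar hyp} once more with $X$ replaced by $X^\odot$ and $U$ replaced by the backward evolutionary system $U^\odot$, which is exactly the dualization you carry out explicitly (defining $P_i^{\odot\star}(s) := (P_i^\odot(s))^\star$ and transposing each of the seven assertions of \Cref{prop:Xsun hyp}). Your version just spells out the bookkeeping of reversed time labels and composition order that the paper leaves implicit.
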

\begin{proof}
Recall that $X^{\odot}$ is a Banach space and $U^\odot$ a backward evolutionary system on $X^\odot$. Therefore, we can apply \Cref{prop:Xstar hyp} with $X$ replaced by $X^\odot$ and $U$ replaced by $U^\star$ by going over from a forward towards a backward evolutionary system. Hence, we obtain the desired result.
\end{proof}

\subsection{Verification of \Cref{hyp:CMT} and \Cref{hyp:X0+sunstar} for classical DDEs} \label{appendix: Spectral DDE}
In order to verify both hypotheses, we have to construct three time-dependent projectors $P_{i}$ with $i \in \{-,0,+\}$. Before we do this, let us first define the time-dependent \emph{spectral projection} (at time $s$) as $P_\lambda(s) \in \mathcal{L}(X)$ with range $E_\lambda(s)$ and kernel $R_\lambda(s)$ that can be represented via the holomorphic functional calculus as the Dunford integral
\begin{equation*}
    P_\lambda(s) := \frac{1}{2 \pi i} \oint_{\partial C_\lambda} (zI-U(s+T,s))^{-1}dz,
\end{equation*}
where $C_\lambda \subset \mathbb{C}$ is a sufficiently small open disk centered at $\lambda$ with $\partial C_\lambda$ its boundary such that $\lambda$ is the only Floquet multiplier inside $C_\lambda$. Recall from the compactness property of $U(s+T,s)$ that the Floquet multipliers are isolated and hence making such a contour $\partial C_\lambda$ in the complex plane is possible.
\begin{proposition} \label{prop:Plambda continuous}
The map $P_\lambda : \mathbb{R} \to \mathcal{L}(X)$ is continuous and $T$-periodic.
\end{proposition}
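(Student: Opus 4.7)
The plan is to verify the two claims separately. $T$-periodicity is immediate from the $T$-periodicity of the underlying linearization, whereas continuity is reduced to norm-continuity of the monodromy $s\mapsto U(s+T,s)$, which in turn is established by a Gronwall argument on the variation-of-constants representation.

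\emph{Periodicity.} The $T$-periodicity of $\gamma$ propagates through $L(t)=DF(\gamma_t)$ to $B(t)$ from \eqref{eq:B(t)}, giving $B(t+T)=B(t)$. A change of variables in \eqref{eq:T-LAIEphi} together with uniqueness of solutions yields the intertwining $U(t+T,s+T)=U(t,s)$ for all $(t,s)\in\Omega_{\mathbb{R}}$; specializing to $t=s+T$ gives $U(s+2T,s+T)=U(s+T,s)$. Since the Floquet multipliers are independent of the starting time (cited as \cite[Theorem 3.3]{Diekmann1995}), the contour $\partial C_\lambda$ can be fixed uniformly in $s$, and substituting the monodromy identity into the Dunford integral defining $P_\lambda$ immediately yields $P_\lambda(s+T)=P_\lambda(s)$.

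\emph{Continuity.} The first step is to reduce to norm-continuity of the monodromy. Apply the second resolvent identity pointwise in $z\in\partial C_\lambda$:
\begin{equation*}
(zI-U(s+T,s))^{-1}-(zI-U(s'+T,s'))^{-1} = (zI-U(s'+T,s'))^{-1}\bigl[U(s+T,s)-U(s'+T,s')\bigr](zI-U(s+T,s))^{-1}.
\end{equation*}
For $s,s'$ in a neighborhood of a fixed $s_0$, the two resolvents remain uniformly bounded on the compact set $\partial C_\lambda$ by a standard resolvent-perturbation argument once the monodromies are shown to be norm-close, so integrating along $\partial C_\lambda$ delivers $\|P_\lambda(s)-P_\lambda(s')\|\leq C_\lambda\|U(s+T,s)-U(s'+T,s')\|$ for some finite $C_\lambda$. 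To control the right-hand side, use the variation-of-constants representation for $U$ (with substitution $\sigma=\tau-s$) in the form
\begin{equation*}
U(s+T,s)=T_0(T)+j^{-1}\int_0^T T_0^{\odot\star}(T-\sigma)\,B(s+\sigma)\,U(s+\sigma,s)\,d\sigma,
\end{equation*}
and split the difference $U(s+T,s)-U(s'+T,s')$ into a contribution involving $B(s+\sigma)-B(s'+\sigma)$ (bounded by $L_B|s-s'|$ via Lipschitz continuity of $B$, together with a uniform bound on $\|U(s+\sigma,s)\|$ on $\sigma\in[0,T]$) and a residual involving $U(s+\sigma,s)-U(s'+\sigma,s')$. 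A standard Gronwall inequality applied to $\phi(\sigma):=\|U(s+\sigma,s)-U(s'+\sigma,s')\|$ then yields $\phi(T)\leq C|s-s'|$ with $C$ uniform on bounded $s$-intervals, whence continuity of $P_\lambda$ follows.

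\emph{Main obstacle.} The delicate point is that strong continuity of the forward evolutionary system $U$ does not directly provide norm-continuity of $s\mapsto U(s+\sigma,s)$. The Gronwall closure must therefore be carried out in the operator norm on the variation-of-constants integral, crucially using Lipschitz continuity of $B$ (which follows from $C^k$-smoothness of $F$ and of $t\mapsto\gamma_t$) together with the uniform bounds on $T_0^{\odot\star}$ and on $\|U(s+\sigma,s)\|$ for $\sigma\in[0,T]$ guaranteed by the exponential trichotomy.
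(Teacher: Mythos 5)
Your proposal is correct and follows the same overall strategy as the paper: both reduce continuity of $P_\lambda$ to norm-continuity of the monodromy $s\mapsto U(s+T,s)$ via the second resolvent identity applied inside the Dunford integral, and both obtain $T$-periodicity from periodicity of $B$ together with the starting-time independence of the Floquet multipliers. The one genuine difference is the final step: the paper simply cites \cite[Lemma 5.2]{Clement1988}, which states that $U(t,s)$ is uniformly norm-continuous along paths keeping $t-s$ constant, whereas you prove this fact from scratch by a Gronwall argument on the variation-of-constants representation, exploiting the Lipschitz continuity of $B$. Your route is self-contained and in fact yields the stronger conclusion that $s\mapsto U(s+T,s)$ is locally Lipschitz; you also treat the uniform boundedness of the resolvents $(zI-U(s'+T,s'))^{-1}$ for $s'$ near $s_0$ more carefully than the paper, which bounds them by a constant $M_h$ without arguing that $M_h$ stays bounded as $h\to 0$. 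Two small remarks: the uniform bound on $\|U(s+\sigma,s)\|$ for $\sigma\in[0,T]$ comes from Gronwall on the variation-of-constants formula together with $\sup_\tau\|B(\tau)\|<\infty$ (by periodicity and continuity of $B$), not from the exponential trichotomy, which only controls the restricted operators $U_-$, $U_0$, $U_+$; and the Gronwall closure in operator norm is legitimate because the norm of a weak$^\star$ integral is dominated by the integral of the norms.
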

\begin{proof}
Let an initial starting time $s \in \mathbb{R}$ be given with arbitrary $h \in \mathbb{R}$. Let $C_\lambda$ be an open disk in $\mathbb{C}$ centered at $\lambda$ such that $\partial C_\lambda$ is a circle with sufficiently small radius $r > 0$ such that $\lambda$ is the only Floquet multiplier in $C_\lambda$. Hence,
\begin{align*}
    \|P_\lambda(s+h) - P_\lambda(s) \| &= \frac{1}{2 \pi} \bigg | \bigg | \oint_{\partial C_\lambda} (zI-U(s+T + h,s + h))^{-1} - (zI-U(s+T,s))^{-1} dz\bigg | \bigg |,
\end{align*}
because the Floquet multipliers are independent of the starting time. Notice that the integrand is just a difference of resolvents and due to the second resolvent identity \cite[Theorem 4.8.2]{Hille1957} we notice that the integrand equals
\begin{equation*}
   R(z,h) [U(s+T+h,s+h) - U(s+T,s)] R(z,0), \quad \forall z \in \partial C_\lambda,
\end{equation*}
where for any $h \in \mathbb{R}$ the resolvent map $R(\cdot,h) : \partial C_\lambda \to \mathcal{L}(X)$ is defined as $R(z,h) = (zI-U(s+T + h,s + h))^{-1}$. Notice that $R(\cdot,h)$ indeed takes values in $\mathcal{L}(X)$ due to the bounded inverse theorem. Filling this back into the expression above yields
\begin{equation*}
    \|P_\lambda(s+h) - P_\lambda(s) \| \leq \frac{1}{2 \pi} \|U(s+T+h,s+h) - U(s+T,s) \|  \oint_{\partial C_\lambda} \|R(z,h)\| \ \|R(z,0)\| dz.
\end{equation*}
We claim that for any fixed $h \in \mathbb{R}$ the map $\partial C_\lambda \ni z \mapsto \|R(z,h)\| \in \mathbb{R}$ is continuous. Indeed, fix a $h \in \mathbb{R}$ and choose $u \in C_\lambda$ such that $|z-u| \to 0$, where $|\cdot|$ represents the arc length on the circle $C_\lambda$. The reverse triangle inequality and the first resolvent identity \cite[Theorem 4.8.1]{Hille1957} implies
\begin{equation*}
    | \ \|R(u,h)\| - \|R(z,h)\| \ | \leq |z-u| \ \|R(u,h)\| \ \|R(z,h)\| \to 0, \quad \mbox{ as } |z-u| \to 0.
\end{equation*}
Since $C_\lambda$ is compact, we have that the image $\{\|R(z,h)\| \ : \ z \in C_\lambda \}$ is a compact subset of $\mathbb{R}$ and hence this set is bounded, say it is contained in the interval $[-M_h,M_h]$ for some constant $M_h > 0$, for a fixed $h \in \mathbb{R}$. We obtain
\begin{align*}
    \|P_\lambda(s+h) - P_\lambda(s) \| &\leq rM_0 M_h \|U(s+T+h,s+h) - U(s+T,s) \| \to 0, \quad \mbox{ as } h \to 0,
\end{align*}
by \cite[Lemma 5.2]{Clement1988} since $(s+T,s) \in \Omega_\mathbb{R}$. The $T$-periodicity holds due to \cite[Corollary XIII.2.2]{Diekmann1995} and the fact that the Floquet multipliers are independent of the starting time \cite[Theorem XIII.3.3]{Diekmann1995}.
\end{proof}

We also need the associated spectral projections on the unstable, center and stable eigenspace. For the unstable and center eigenspace, denote the \emph{spectral projection on the unstable eigenspace} (at time $s$) and the \emph{spectral projection on the center eigenspace} (at time $s$) as the operators $P_+(s) \in \mathcal{L}(X)$ with range $X_{+}(s)$ and $P_0(s) \in \mathcal{L}(X)$ with range $X_0(s)$ defined as
\begin{equation*}
    P_{+}(s) := \sum_{\lambda \in \Lambda_+} P_\lambda(s), \quad P_{0}(s) := \sum_{\lambda \in \Lambda_0} P_{\lambda}(s).
\end{equation*}
Define the \emph{spectral projection on the stable eigenspace} (at time $s$) as $P_{-}(s) := I - P_0(s) - P_+(s) \in \mathcal{L}(X)$ and it holds that $P_{-}(s)$ is indeed the projection on the stable eigenspace $X_{-}(s)$, see \cite[Lemma 7.2.2]{Church2018}. The proof of the following result is almost the same as \cite[Theorem 7.2.1]{Church2018}, but we give it for the sake of completeness.

\begin{proposition} \label{prop:DDE hypothesis}
The setting of \eqref{eq:DDEphi} satisfies \Cref{hyp:CMT}.
\end{proposition}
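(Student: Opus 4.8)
The plan is to check the seven items of \Cref{hyp:CMT} in turn, using the spectral apparatus already assembled: the Floquet multipliers, the generalized eigenspaces $E_\lambda(s)$ with complements $R_\lambda(s)$, the spectral projectors $P_\lambda(s)$, and \Cref{prop:Plambda continuous}. Items (1)--(4) are bookkeeping within the Riesz decomposition of the (eventually compact) monodromy operator $M(s):=U(s+T,s)$. Closedness of $X_-(s),X_0(s),X_+(s)$ is immediate since $X_0(s),X_+(s)$ are finite-dimensional and $X_-(s)=\ran(P_-(s))$ with $P_-(s):=I-P_0(s)-P_+(s)$ a bounded projection. The decomposition \eqref{eq:decomposition X hyp} and the range identities in (2) follow because $\{P_\lambda(s):\lambda\in\Lambda_0\cup\Lambda_+\}$ are mutually orthogonal idempotents and $P_0(s)+P_+(s)$ is the spectral projection onto $X_0(s)\oplus X_+(s)$ along $\bigcap_{\lambda\in\Lambda_0\cup\Lambda_+}R_\lambda(s)=X_-(s)$; this also yields the orthogonality in (4). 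Continuity in (2) follows from \Cref{prop:Plambda continuous} as each $P_i$ is a finite sum, or the complement of one, of the continuous maps $P_\lambda$. For the uniform bound (3), $T$-periodicity of each $P_\lambda$ makes $s\mapsto\|P_-(s)\|+\|P_0(s)\|+\|P_+(s)\|$ continuous and $T$-periodic, hence bounded by its maximum $N$ over the compact interval $[0,T]$.

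For item (5) the preliminary fact is that the evolutionary system is $T$-periodic, $U(t+T,s+T)=U(t,s)$ for $(t,s)\in\Omega_\mathbb{R}$, which follows from $T$-periodicity of $L$ and uniqueness of solutions of \eqref{eq:T-LDDEphi} (equivalently \eqref{eq:T-LAIEphi}). Combined with the cocycle property this gives the intertwining relation $M(t)U(t,s)=U(t,s)M(s)$ for $t\geq s$. Since the Floquet multipliers are independent of the starting time, for $z\in\partial C_\lambda$ the point $z$ lies in the resolvent set of both $M(s)$ and $M(t)$, and the intertwining relation upgrades to $U(t,s)(zI-M(s))^{-1}=(zI-M(t))^{-1}U(t,s)$; integrating over $\partial C_\lambda$ against $\tfrac{1}{2\pi i}$ yields $U(t,s)P_\lambda(s)=P_\lambda(t)U(t,s)$. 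Summing over $\lambda\in\Lambda_0$ and over $\lambda\in\Lambda_+$ gives (5) for $P_0$ and $P_+$, and subtracting from $U(t,s)$ gives it for $P_-$.

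Item (6) requires more care. The restriction of $M(s)$ to the finite-dimensional eigenspace $E_\lambda(s)$ has $\lambda\neq 0$ as its only spectral value and is therefore invertible; using the $T$-periodicity of eigenspaces $E_\lambda(s+T)=E_\lambda(s)$, the cocycle property, and any $n\in\mathbb{N}$ with $s+nT\geq t$, one shows $U_\lambda(t,s):=U(t,s)|_{E_\lambda(s)}:E_\lambda(s)\to E_\lambda(t)$ is injective, because $U(s+nT,t)U_\lambda(t,s)=(M(s)|_{E_\lambda(s)})^n$ is invertible, and surjective by the symmetric factorization (injectivity of $U(s+nT,t)|_{E_\lambda(t)}$, proved the same way with $t$ in place of $s$, forces any preimage under $(M(s)|_{E_\lambda(s)})^n$ to map to the target). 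Its inverse then extends $U_\lambda$ consistently to all of $\mathbb{R}^2$ via negative powers of the monodromy; summing over $\lambda\in\Lambda_0$, resp. $\Lambda_+$, produces the invertible restrictions $U_0$ and $U_+$, and \eqref{eq:U0U+} is the two-sided cocycle property restricted to the center, resp. unstable, fiber bundle.

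Item (7), the exponential trichotomy, is the technical heart. The spectrum of $M(s)|_{X_-(s)}$ lies in $\{0\}\cup\{|\lambda|<1\}$, so has spectral radius $\rho_-<1$; by Gelfand's formula $\|(M(s)|_{X_-(s)})^n\|$ decays like $\rho_-^{\,n}$ up to constants. Similarly $(M(s)|_{X_+(s)})^{-1}$ has spectral radius $\rho_+<1$, while $M(s)|_{X_0(s)}$ and its inverse have spectral radius exactly $1$, so for every $\varepsilon>0$ their $n$-th powers are bounded by $C_\varepsilon e^{\varepsilon n}$. To pass to the continuous bounds, for $t\geq s$ write $t-s=nT+r$ with $n\in\mathbb{N}_0$, $r\in[0,T)$, split $U_-(t,s)=U_-(t,s+nT)\,(M(s)|_{X_-(s)})^{n}$, and bound $\|U_-(t,s+nT)\|$ uniformly in $s$ and $r$ using strong continuity of $U$ together with $T$-periodicity (which reduces the bounded parameter range to the compact set $[0,T]$); analogous splittings handle $X_+$ for $t\leq s$ and $X_0$ for all $t,s$. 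I expect this last step — forcing the trichotomy constants $K_\varepsilon$ to be genuinely independent of $s$, which is exactly what $T$-periodicity plus the continuity from \Cref{prop:Plambda continuous} provide — together with the bijectivity argument in (6), to be the only parts needing real work; the rest is a direct consequence of the Riesz decomposition and the results already established.
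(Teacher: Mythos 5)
Your proposal is correct and follows essentially the same route as the paper: verification of the seven items of \Cref{hyp:CMT} via the Riesz decomposition associated with the monodromy operator $U(s+T,s)$, the continuity and $T$-periodicity of the Dunford projections from \Cref{prop:Plambda continuous}, the intertwining/commutation relations, and Gelfand's spectral radius formula combined with a $t-s = nT + r$ splitting for the exponential trichotomy. The only noteworthy deviation is item 3, where your continuity-plus-periodicity-plus-compactness argument is more direct than the paper's contradiction argument (which presupposes items 5 and 7); otherwise the two proofs coincide up to the level of detail.
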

\begin{proof}
We verify the seven criteria step by step.

1. The decomposition \eqref{eq:decomposition X hyp} can be also used in the case where $E_\lambda(s)$ is replaced with the finite-dimensional vector space $X_{+}(s) \oplus X_{0}(s)$. Then, $X = X_{+}(s) \oplus X_{0}(s) \oplus R(s)$ for some vector space $R(s)$. We have to show $R(s) = X_{-}(s)$. By the decomposition $\eqref{eq:decomposition X hyp}$ we know that $P_{+0}(s):= P_+(s) + P_0(s) $ is a projection with range $X_{+0}(s) = X_{-}(s) \oplus X_{0}(s)$ and $R(s) = \ker P_{+0}(s)$ and notice that $R(s) = \cap_{\lambda \in \Lambda_{0+}} \ker(P_{\lambda}(s)) = X_{-}(s)$. The spaces $X_{+}(s)$ and $X_{0}(s)$ are automatically closed since they are finite-dimensional. To show that $X_{-}(s)$ is closed, notice that for each $\lambda \in \Lambda_{0+}$ the space $R_{\lambda}(s)$ is closed and because the finite intersection of closed sets is closed, the result follows from \eqref{eq:X-}. 

2. For $P_{+}$ and $P_{0}$ the claim about the range follows immediately from their definition and the claim about $P_{-}$ follows from the fact that $P_{-}(s)$ is the projection on $X_{-}(s)$. To show the continuity statement, recall from \Cref{prop:Plambda continuous} that for any Floquet multiplier $\lambda$, the map $P_\lambda$ is continuous. As $P_{+}$ and $P_{0}$ are finite sums of such continuous projectors, it follows that both projectors are continuous. Since $P_{-} = I - P_0 - P_{+}$ it follows that $P_{-}$ is also continuous. 

3. Since $P_- + P_0 + P_+ = I$, we have $\|P_{-}(t)\| \leq \|P_{0}(t)\| + \|P_{+}(t)\|$ for all $t \in \mathbb{R}$, and so it remains to prove that $t\mapsto \|P_{0}(t)\|$ and $t \mapsto \|P_{+}(t)\|$ are uniformly bounded on $[0,T]$ by $T$-periodicity. We will only show the claim for $P_{0}$ since the proof is similar for $P_{+}$. 

Suppose for a moment that part 5 and 7 are satisfied. They will be proven later, independently of this property. Assume that $t \mapsto \|P_{0}(t)\|$ is not uniformly bounded on $[0,T]$, then there exist sequences $(x_{n})_{n \in \mathbb{N}} \subset X$ and $(t_{n})_{n \in \mathbb{N}} \subset [0,T]$ such that $\|x_n\|_{\infty} = 1$ and $\|P_0(t_n)x_n\|_\infty = n$. Then for a given $\varepsilon > 0$, there is a constant $K_\varepsilon > 0$ such that 
\begin{align*}
    n &= \|P_0(t_n)x_n\|_\infty \leq \|U_{0}(t_n,T)\| \ \|P_0(T)\| \ \|U_0(T,t_n)\| \leq K_\varepsilon^2 e^{2 \varepsilon T} \|P_0(T)\|, 
\end{align*}
which is a contradiction, since $n \in \mathbb{N}$ can be taken arbitrary large.

4. Let $i,j \in \{-,0,+\}$ with $i \neq j$ and let $\varphi\in X$. By the decomposition proved in criterion one we have that $\varphi= \varphi_{i}(s) + \varphi_{j}(s) + \varphi_{k}(s)$, where $k \in \{-,0,+ \}$ such that $k \neq i$ and $k \neq j$. Then from the interplay between the ranges and kernels of the projections it follows that
\begin{equation*}
    P_{i}(s)P_{j}(s) \varphi= P_{i}(s)P_{j}(s)[\varphi_{i}(s) + \varphi_{j}(s) + \varphi_{k}(s)] = P_{i}(s) \varphi_{j}(s) = 0,
\end{equation*}
which proves this part. 

5. It is proven in \cite[Theorem XIII.3.3]{Diekmann1995} that
\begin{equation} \label{eq:P and U}
    P(t)U(t+T,s+jT) = U(t+T,s+jT)P(s)
\end{equation}
for $j \in \mathbb{N}$ chosen in such a way that $s+(j-1)T \leq t < s+jT$ and for $P \in \{P_{-},P_0,P_{+} \}$. Hence,
\begin{align*}
    P(t)U(t,s) &= P(t)U(t,s+jT)U(s+jT,s) = U(t,s+jT)U(s+T,s)^j P^j(s) = U(t,s)P(s),
\end{align*}
where we have used that $P(s)$ is a projection that commutes with $U(s+T,s)$. This last claim follows from setting $s=t$ and $j=1$ in \eqref{eq:P and U} together with \cite[Corollary XIII.2.2]{Diekmann1995}.

6. Notice that $U_{+}(t,s)$ and $U_0(t,s)$ are defined for all $t,s \in \mathbb{R}$ because they are restricted to a finite-dimensional space. Since $U_{+}(t,s) U_{+}(s,t) = I = U_{+}(s,t) U_{+}(t,s)$ we have that $U_{+}(t,s)$ is invertible with inverse $U_{+}(t,s)^{-1} = U_{+}(s,t)$. Similarly $U_{0}(t,s)^{-1} = U_{0}(s,t)$. To show the remaining part, that is \eqref{eq:U0U+}, we have six different cases depending on the location of $t,\tau,s \in \mathbb{R}$. This is a straightforward computation and will be omitted. 

7. We will start with the center part. The stable and unstable part will then follow from a similar reasoning. Let $\varepsilon > 0$ and $s \in \mathbb{R}$ be given. As the map $t \mapsto U_0(t,s) \varphi$ is continuous for any $\varphi \in X$ and $t \geq s$, we know
\begin{equation*}
    \sup_{s \leq t \leq s+T} \|U_0(t,s)\varphi\|_\infty < \infty, \quad \forall \varphi \in X.
\end{equation*}
By the principle of uniform boundedness, we get
\begin{equation*}
    \sup_{s \leq t \leq s+T} \|U_0(t,s)\| \leq K,
\end{equation*}
for some $K > 0$. Because the spectrum of $U_0(s+T,s)$ lies on the unit circle, we have by the spectral radius formula also known as the Gelfand-Beurling formula that
\begin{equation*}
    1 = \max_{\lambda \in \sigma(U_0(s+T,s))} |\lambda| = \lim_{j \to \infty} \|U_0(s+T,s)^j\|^{\frac{1}{j}}
\end{equation*}
and so there exists an integer $k_\varepsilon > 0$ such that $\|U_0(s+T,s)^{k_\varepsilon}\| < 1+ \varepsilon T$ and denote
\begin{equation*}
    K_\varepsilon := K \max_{j=0,\dots,k_\varepsilon-1} \|U_0(s+T,s)^j\|.
\end{equation*}
Now, let $m_t$ be the largest integer such that $s+m_t k_\varepsilon T \leq t$ and $m_t^{\star} \in \{0,\dots,k_\varepsilon - 1 \}$ the largest integer such that $s+m_t k_\varepsilon T + m_t^\star \leq t$. Then,
\begin{align*}
    U_0(t,s) &= U_0(t,s+m_tk_\varepsilon T + m_t^\star T)U_0(s+m_tk_\varepsilon T + m_t^\star T,s+m_tk_\varepsilon T)U_0(s+m_t k_\varepsilon T,s)\\
    &= U_0(t- m_tk_\varepsilon T - m_t^\star T,s)U_0(s+m_t^\star T,s)U_0(s+m_t k_\varepsilon T,s)\\
    &= U_0(t- m_tk_\varepsilon T - m_t^\star T,s)  U_0(s+ T,s)^{m_t^\star}U_0(s+T,s)^{m_t k_\varepsilon}.
\end{align*}
We can make the estimate
\begin{equation*}
    \|U_0(t,s)\| \leq K_\varepsilon \|U_0(s+T,s)^{k_\varepsilon}\|^{m_t} \leq K_\varepsilon (1+\varepsilon T)^{\frac{t - s}{T}} = K_\varepsilon [(1 + \varepsilon T)^{\frac{1}{\varepsilon T}}]^{\varepsilon(t-s)} \leq K_{\varepsilon}e^{\varepsilon(t-s)},
\end{equation*}
since the function $(0,\infty) \ni x \mapsto (1+\frac{1}{x})^x \in \mathbb{R}$ is monotonically increasing. The proof is analogous when $t \leq s$ and so we obtain $\|U_0(t,s)\| \leq K_{\varepsilon}e^{\varepsilon|t-s|}$. The proofs for the stable and unstable part are analogous.
\end{proof}

Denote for any Floquet multiplier $\lambda$ and any $s \in \mathbb{R}$ the time-dependent \emph{extended spectral projection} $P_\lambda^{\odot \star}(s) \in \mathcal{L}(X^{\odot \star})$ with range $jE_\lambda(s)$ and kernel $R_\lambda^{\odot \star}(s)$, where $R_\lambda ^{\odot \star} (s)$ is the called the \emph{extended complementary (generalized) eigenspace} (at time $s$) coming from the decomposition $X^{\odot \star} = jE_\lambda(s) \oplus R_\lambda^{\odot \star}(s)$. Define the \emph{extended unstable eigenspace} (at time $s$) and \emph{extended center eigenspace} (at time $s$) as

\begin{equation*}
    X_{+}^{\odot \star}(s) := j(X_+(s)) = \bigoplus_{\lambda \in \Lambda_+} jE_{\lambda}(s), \quad X_0^{\odot \star}(s) := j(X_0(s)) = \bigoplus_{\lambda \in \Lambda_0} jE_{\lambda}(s),
\end{equation*}
and notice via extended complementary (generalized) eigenspaces that the \emph{extended stable eigenspace} (at time $s$) can be defined as
\begin{equation*}
    X_{-}^{\odot \star}(s) := \bigcap_{\lambda \in \Lambda_0 \cup \Lambda_+} R_{\lambda}^{\odot \star}(s).
\end{equation*}
The construction of $X_{+}^{\odot \star}(s)$ and $X_{0}^{\odot \star}(s)$ directly shows that \Cref{hyp:X0+sunstar} is satisfied.

\section{Smoothness and periodicity of the center manifold} \label{appendix: smoothness and periodicty}
This section of the appendix consists of three parts. Firstly, we show that the map $\mathcal{C}$ is not only fiberwise Lipschitz, but Lipschitz continuous in the second component where the Lipschitz constant is independent of the fiber. The proof of this claim is inspired by \cite[Corollary 5.4.1.1]{Church2018}. Secondly, we prove via the theory of contractions on scales of Banach spaces, see \cite[Section IX.6, Appendix IV]{Diekmann1995} and \cite{Vanderbauwhede1987} that the map $\mathcal{C}$ is $C^k$-smooth. To do this, we combine the ideas from \cite[Section IX.7]{Diekmann1995}, \cite[Section 8]{Church2018} and \cite{Hupkes2008}. Lastly, under the assumption of $T$-periodicity of the time-dependent nonlinear perturbation $R$ in the first component, we show that there exists a neighborhood of $0$ in $X$ such that the center manifold is $T$-periodic in this neighborhood. The proof of this result is inspired by \cite[Lemma 8.3.1 and Theorem 8.3.1]{Church2018}.

\begin{corollary} \label{cor:lipschitzCMT}
There exists a constant $L > 0$ such that $\|\mathcal{C}(t,\varphi) - \mathcal{C}(t,\psi)\| \leq L \|\varphi - \psi\|$ for all $t \in \mathbb{R}$ and $\varphi,\psi \in X_0(t)$. 
\end{corollary}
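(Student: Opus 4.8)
The plan is to obtain the fiber-uniform Lipschitz bound directly from the fiberwise estimate already established in \Cref{thm:sol lipschitz}, the key point being that the constant produced there stems from the exponential trichotomy of \Cref{hyp:CMT}, whose constant $K_\varepsilon$ does not depend on the base point $s$, and that evaluating a function of $\BC_t^\eta(\mathbb{R},X)$ at the distinguished time $t$ costs only the weight $e^{-\eta|t-t|}=1$. So no new fixed-point argument is needed; the corollary is a bookkeeping consequence of earlier uniform-in-$s$ bounds.

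Concretely, I would fix $\eta\in(0,\min\{-a,b\})$, pick some $\varepsilon\in(0,\eta)$, and take $\delta>0$ small enough that all conclusions of \Cref{thm:sol lipschitz} hold; recall from \Cref{cor:lipschitz2} and part~1 of \Cref{prop:ketas} that the smallness threshold on $\delta$ (i.e.\ the requirement $L_{R_\delta}\|\mathcal{K}_s^\eta\|\le \frac12$) can be chosen independently of $s$, and assume the $\delta$ used to define $\mathcal{C}$ in \eqref{eq:map C} is below it. For fixed $t\in\mathbb{R}$ and $\varphi,\psi\in X_0(t)$, apply part~2 of \Cref{thm:sol lipschitz} with $s=t$: its proof yields $\|u_t^\star(\varphi)-u_t^\star(\psi)\|_{\eta,t}\le 2K_\varepsilon\|\varphi-\psi\|$, where $K_\varepsilon$ is precisely the trichotomy constant from part~7 of \Cref{hyp:CMT} and the factor $2$ comes from the contraction constant $1/2$, neither of which depends on $t$. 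Since $\mathcal{C}(t,\varphi)=u_t^\star(\varphi)(t)$ by \eqref{eq:map C}, evaluating at time $t$ gives
\begin{equation*}
    \|\mathcal{C}(t,\varphi)-\mathcal{C}(t,\psi)\| = \big\|\big(u_t^\star(\varphi)-u_t^\star(\psi)\big)(t)\big\| \le \|u_t^\star(\varphi)-u_t^\star(\psi)\|_{\eta,t} \le 2K_\varepsilon\|\varphi-\psi\|,
\end{equation*}
where the first inequality is just $e^{-\eta|t-t|}=1\le$ the supremum defining $\|\cdot\|_{\eta,t}$. Hence the assertion holds with $L:=2K_\varepsilon$.

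There is no serious obstacle here: the entire content is the observation that the constant $2K_\varepsilon$ delivered by \Cref{thm:sol lipschitz} is genuinely fiber-independent, which is inherited from the single global trichotomy constant $K_\varepsilon$ of \Cref{hyp:CMT} together with the uniform-in-$s$ bounds on $\|P_i(s)\|$ and $\|\mathcal{K}_s^\eta\|$ recorded in \Cref{hyp:CMT} and \Cref{prop:ketas}. No contraction argument, scale-of-Banach-spaces machinery, or smoothness input is required; one only keeps track that the relevant thresholds were all chosen uniformly in the base time when $\mathcal{C}$ was constructed.
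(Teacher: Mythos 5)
Your proof is correct and follows essentially the same route as the paper: both arguments reduce the claim to the fiber-uniform estimate $\|u_t^\star(\varphi)-u_t^\star(\psi)\|_{\eta,t}\le 2K_\varepsilon\|\varphi-\psi\|$ from \Cref{thm:sol lipschitz}, whose constant is independent of $t$ because $K_\varepsilon$, $L_{R_\delta}$ and the bound on $\|\mathcal{K}_s^\eta\|$ all are. The only difference is cosmetic: the paper unfolds the fixed-point equation once, writing $\mathcal{C}(t,\varphi)=\varphi+\mathcal{K}_t^\eta[\tilde{R}_{\delta,t}(u_t^\star(\varphi))](t)$ and arriving at $L=1+2C_\eta L_{R_\delta}K_\varepsilon$, whereas you evaluate $u_t^\star(\varphi)$ at time $t$ directly (where the weight $e^{-\eta|t-t|}=1$ makes point evaluation contractive for $\|\cdot\|_{\eta,t}$) and obtain $L=2K_\varepsilon$; both are valid.
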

\begin{proof}
Let $t \in \mathbb{R}$ and $\varphi,\psi \in X_0(t)$ be given. Notice that
\begin{equation*}
    \mathcal{C}(t,\varphi) = u_t^\star(\varphi)(t) = [\mathcal{G}_t(u_t^\star(\varphi)(t),\varphi)](t) = \varphi + \mathcal{K}_t^\eta[\tilde{R}_{\delta,t}(u_t^\star(\varphi)(t))](t).
\end{equation*}
By \Cref{prop:ketas}, we know there exists a constant $C_{\eta} > 0,$ independent of $t$ such that $\|\mathcal{K}_t^\eta\| \leq C_\eta$. Hence, from \Cref{cor:lipschitz2} and \Cref{thm:sol lipschitz} we get
\begin{align*}
    \|\mathcal{C}(t,\varphi)  - \mathcal{C}(t,\psi)\| &\leq  \|\varphi - \psi \|  + \| \mathcal{K}_t^\eta [ \tilde{R}_{\delta,t}(u_t^\star(\varphi)(t)) - \tilde{R}_{\delta,t}(u_t^\star(\psi)(t))](t)\| \\
    &\leq \|\varphi - \psi \|  + \| \mathcal{K}_t^\eta \| \sup_{s \in \mathbb{R}} \|[\tilde{R}_{\delta,t}(u_t^\star(\varphi)(t)) - \tilde{R}_{\delta,t}(u_t^\star(\psi)(t))](s)\|e^{-\eta|t-s|} \\
    &\leq \|\varphi - \psi \|  + C_\eta \|\tilde{R}_{\delta,t}(u_t^\star(\varphi)(t)) - \tilde{R}_{\delta,t}(u_t^\star(\psi)(t))\|_{\eta,t} \\
    &= (1 + 2C_\eta L_{R_{\delta}} K_\varepsilon)\|\varphi - \psi \|.
\end{align*}
Hence $L = 1 + 2C_\eta L_{R_{\delta}}K_\varepsilon > 0$ is the Lipschitz constant we were looking for.
\end{proof}

The following lemma will be important to prove smoothness of $\mathcal{C}$ and $\mathcal{W}^c$.

\begin{lemma}[{\cite[Lemma XII.6.6 and XII.6.7]{Diekmann1995}}] \label{lemma:fixedpointsmooth}
Let $Y_0,Y,Y_1$ and $\Lambda$ be Banach spaces with continuous embeddings $J_0 : Y_0 \hookrightarrow Y$ and $J : Y \hookrightarrow Y_1$. Consider the fixed point problem $y = f(y,\lambda)$ for $f : Y \times \Lambda \to Y$. Suppose that the following conditions hold.
\begin{enumerate}
    \item The function $g : Y_0 \times \Lambda \to Y_1$ defined as $ g(y_0,\lambda) := Jf(J_0y_0,\lambda)$ is of the class $C^1$ and there exist mappings
    \begin{align*}
        f^{(1)} &: J_0 Y_0 \times \Lambda \to \mathcal{L}(Y), \\
        f_1^{(1)} &: J_0Y_0 \times \Lambda \to \mathcal{L}(Y_1),
    \end{align*}
    such that
    \begin{equation*}
        D_1 g(y_0,\lambda) \xi = Jf^{(1)}(J_0y_0,\lambda)J_0, \quad \forall (y_0,\lambda,\xi) \in Y_0 \times \Lambda \times Y_0
    \end{equation*}
    and 
    \begin{equation*}
      Jf^{(1)}(J_0y_0,\lambda)y = f_1^{(1)}(J_0y_0,\lambda)Jy, \quad \forall (y_0,\lambda,y) \in Y_0 \times \Lambda \times Y.
    \end{equation*}
    \item There exists a $\kappa \in [0,1)$ such that for all $\lambda \in \Lambda$ the map $f(\cdot,\lambda) : Y \to Y$ is Lipschitz continuous with Lipschitz constant $\kappa$, independent of $\lambda$. Furthermore, for any $\lambda \in \Lambda$ the maps $f^{(1)}(\cdot,\lambda)$ and $f_1^{(1)}(\cdot,\lambda)$ are uniformly bounded by $\kappa$.
    \item Under the previous condition, the unique fixed point $\Psi : \Lambda \to Y$ satisfies $\Psi(\lambda) = f(\Psi(\lambda),\lambda)$ and can be written as $\Psi = J_0 \circ \Psi$ for some continuous $\Psi : \Lambda \to Y_0$.
    \item The function $f_0 : Y_0 \times \Lambda \to Y$ defined by $f_0(y_0,\lambda) = f(J_0y_0,\lambda)$ has continuous partial derivative
    \begin{equation*}
        D_2 f : Y_0 \times \Lambda \to \mathcal{L}(\Lambda,Y).
    \end{equation*}
    \item The mapping $Y_0 \times \Lambda \ni (y,\lambda) \mapsto J \circ f^{(1)}(J_0y,\lambda) \in \mathcal{L}(Y,Y_1)$ is continuous.
\end{enumerate}
Then the map $J \circ \Psi$ is of the class $C^1$ and $D(J \circ \Psi)(\lambda) = J \circ \mathcal{A}(\lambda)$ for all $\lambda \in \Lambda$, where $A = \mathcal{A}(\lambda) \in \mathcal{L}(\Lambda,Y)$ is the unique solution of the fixed point equation
\begin{equation*}
    A = f^{(1)}(\Psi(\lambda),\lambda) A + D_2 f_0(\Psi(\lambda),\lambda),
\end{equation*}
formulated in $\mathcal{L}(\Lambda,Y)$.
\end{lemma}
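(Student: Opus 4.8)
The plan is to prove two things: that $J\circ\Psi$ is Fr\'echet differentiable at every $\lambda\in\Lambda$ with derivative $J\circ\mathcal A(\lambda)$, and that $\lambda\mapsto J\circ\mathcal A(\lambda)$ is continuous. I would first fix notation: write $\Psi=J_0\circ\Psi_0$ with $\Psi_0:\Lambda\to Y_0$ continuous (the $Y_0$-valued map supplied by condition~3), and note that applying $J$ to the fixed-point relation gives $J\Psi(\lambda)=g(\Psi_0(\lambda),\lambda)$, since $g(y_0,\lambda)=Jf(J_0y_0,\lambda)$. Next I would check that $\mathcal A(\lambda)$ is well defined: on $\mathcal L(\Lambda,Y)$ the affine map $A\mapsto f^{(1)}(\Psi(\lambda),\lambda)A+D_2f_0(\Psi_0(\lambda),\lambda)$ is a contraction with factor $\le\kappa<1$ by condition~2, and its constant term lies in $\mathcal L(\Lambda,Y)$ by condition~4; hence it has a unique fixed point $\mathcal A(\lambda)$, with $\|\mathcal A(\lambda)\|_{\mathcal L(\Lambda,Y)}$ locally bounded in $\lambda$, so in particular $\mathcal A(\lambda)h=O(\|h\|)$ in $Y$.

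The heart of the proof is differentiability. Fix $\lambda$, take a small $h$, and set $\Delta:=\Psi(\lambda+h)-\Psi(\lambda)\in Y$, $z:=J(\Delta-\mathcal A(\lambda)h)\in Y_1$ and $\sigma_t:=(1-t)\Psi_0(\lambda)+t\Psi_0(\lambda+h)\in Y_0$. Starting from $J\Delta=g(\Psi_0(\lambda+h),\lambda+h)-g(\Psi_0(\lambda),\lambda)$, I would peel off the frozen-state increment $g(\Psi_0(\lambda),\lambda+h)-g(\Psi_0(\lambda),\lambda)$, which by condition~4 equals $JD_2f_0(\Psi_0(\lambda),\lambda)h+o(\|h\|)$ in $Y_1$, and treat the remaining state-increment by the fundamental theorem of calculus for the $C^1$ map $g(\cdot,\lambda+h)$, using $D_1g(y_0,\mu)=J\circ f^{(1)}(J_0y_0,\mu)\circ J_0$ (condition~1); this produces $\int_0^1 Jf^{(1)}(J_0\sigma_t,\lambda+h)\Delta\,dt$. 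Subtracting the identity $J\mathcal A(\lambda)h=Jf^{(1)}(\Psi(\lambda),\lambda)\mathcal A(\lambda)h+JD_2f_0(\Psi_0(\lambda),\lambda)h$, splitting $\Delta=(\Delta-\mathcal A(\lambda)h)+\mathcal A(\lambda)h$ inside the integral, and moving $J$ through $f^{(1)}$ on the $Y$-vector $\Delta-\mathcal A(\lambda)h$ by the compatibility relation $Jf^{(1)}(J_0y_0,\mu)y=f_1^{(1)}(J_0y_0,\mu)Jy$ turns the whole thing into the self-referential identity
\begin{align*}
 z &=\int_0^1 f_1^{(1)}(J_0\sigma_t,\lambda+h)\,z\,dt+E_h, \\
 E_h &=\int_0^1\big[Jf^{(1)}(J_0\sigma_t,\lambda+h)-Jf^{(1)}(\Psi(\lambda),\lambda)\big]\mathcal A(\lambda)h\,dt+o(\|h\|).
\end{align*}
Because $\|f_1^{(1)}(\cdot,\cdot)\|_{\mathcal L(Y_1)}\le\kappa$ by condition~2, the first integral has $Y_1$-norm $\le\kappa\|z\|_{Y_1}$, so $\|z\|_{Y_1}\le(1-\kappa)^{-1}\|E_h\|_{Y_1}$.

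To finish differentiability I would show $E_h=o(\|h\|)$: as $h\to0$ one has $\sigma_t\to\Psi_0(\lambda)$ in $Y_0$ uniformly in $t\in[0,1]$ (continuity of $\Psi_0$), so by condition~5 the number $\omega(h):=\sup_{t\in[0,1]}\|Jf^{(1)}(J_0\sigma_t,\lambda+h)-Jf^{(1)}(\Psi(\lambda),\lambda)\|_{\mathcal L(Y,Y_1)}$ tends to $0$, whence the integral term is $\le\omega(h)\|\mathcal A(\lambda)\|\,\|h\|=o(\|h\|)$ and the rest of $E_h$ is $o(\|h\|)$ by construction; therefore $D(J\circ\Psi)(\lambda)=J\circ\mathcal A(\lambda)$. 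For continuity of this derivative I would apply $J$ and the compatibility relation to the equation defining $\mathcal A(\lambda)$, obtaining $J\mathcal A(\lambda)=f_1^{(1)}(\Psi(\lambda),\lambda)\circ J\mathcal A(\lambda)+J\circ D_2f_0(\Psi_0(\lambda),\lambda)$ in $\mathcal L(\Lambda,Y_1)$; subtracting this identity at a nearby $\lambda_0$, and using that $\ran(J\mathcal A(\lambda_0))\subseteq JY$ so that $[f_1^{(1)}(\Psi(\lambda),\lambda)-f_1^{(1)}(\Psi(\lambda_0),\lambda_0)]\circ J\mathcal A(\lambda_0)$ can once more be rewritten via compatibility and estimated through $\|Jf^{(1)}(\Psi(\lambda),\lambda)-Jf^{(1)}(\Psi(\lambda_0),\lambda_0)\|_{\mathcal L(Y,Y_1)}$ (small by conditions~3 and~5), together with continuity of $J\circ D_2f_0(\Psi_0(\cdot),\cdot)$ (conditions~3 and~4), yields $\|J\mathcal A(\lambda)-J\mathcal A(\lambda_0)\|\le(1-\kappa)^{-1}\|R_\lambda\|$ with $R_\lambda\to0$.

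The main obstacle is precisely the scale structure: $\Psi_0$ is only known to be \emph{continuous} into $Y_0$, not Lipschitz or differentiable, and $f(\cdot,\lambda)$ need not be differentiable as a self-map of $Y$, so one cannot linearise the fixed-point equation in $Y_0$ directly. The device that circumvents this is to express the remainder $z$ of the difference quotient as the solution of its own contractive fixed-point identity in the roughest space $Y_1$; this is legitimate only because every averaged $f^{(1)}$ appearing there acts on vectors of $Y$ and can therefore be moved past $J$ by the compatibility identity of condition~1, reducing everything to the single continuity hypothesis~5 for $J\circ f^{(1)}$. The genuinely delicate bookkeeping is verifying, with only that non-uniform continuity at hand, that $E_h$ is $o(\|h\|)$ in the $Y_1$-norm, and the same observation — that the relevant operators have range in $JY$ — is what makes the continuity of the derivative go through as well.
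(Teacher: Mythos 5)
The paper states this lemma without proof, simply citing \cite[Lemma XII.6.6 and XII.6.7]{Diekmann1995}, so there is no in-paper argument to compare against. Your reconstruction is correct and is essentially the classical proof of the cited result: the decisive device --- writing the difference-quotient remainder $z=J\bigl(\Psi(\lambda+h)-\Psi(\lambda)-\mathcal A(\lambda)h\bigr)$ as the solution of its own $\kappa$-contractive affine identity in the largest space $Y_1$, using the compatibility relation of condition~1 to push $J$ through $f^{(1)}$ --- is exactly the mechanism of the scale-of-Banach-spaces lemma, and your verifications that $E_h=o(\|h\|)$ follows from condition~5 alone and that $\lambda\mapsto J\mathcal A(\lambda)$ is continuous by the analogous subtraction argument are both sound.
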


An important observation between the dependence of $u_s^\star$ on $\delta$ is presented in the following lemma. To make the notation a bit simpler, we define the map $\hat{P}_0 : \BC_s^{\eta}(\mathbb{R},X) \to \BC_s^{\eta}(\mathbb{R},X)$ pointwise as $(\hat{P}_0\varphi)(t) := (P_0(t)\varphi)(t) \in X_0(t)$ for all $t \in \mathbb{R}$ and have the following lemma.
\begin{lemma}\label{lemma:P0hat}
If $\delta > 0$ is sufficiently small, then $\|(I-\hat{P}_0)u_s^\star(\varphi)\|_{0,s} < N\delta$.
\end{lemma}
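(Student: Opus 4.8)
The plan is to exploit the fixed-point identity for $u_s^\star(\varphi)$ together with the fact that the $\delta$-modified nonlinearity is globally bounded, and then to invoke the third assertion of \Cref{prop:ketas}.

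First I would write $u_s^\star(\varphi) = \mathcal{G}_s(u_s^\star(\varphi),\varphi) = U(\cdot,s)\varphi + \mathcal{K}_s^\eta\bigl(\tilde{R}_{\delta,s}(u_s^\star(\varphi))\bigr)$ and apply the pointwise projector $I-\hat{P}_0$ to both sides. Because $\varphi\in X_0(s)$ we have $P_0(s)\varphi=\varphi$, so part 5 of \Cref{hyp:CMT} gives $U(t,s)\varphi=U(t,s)P_0(s)\varphi=P_0(t)U(t,s)\varphi\in X_0(t)$ for every $t$, whence $(I-\hat{P}_0)(U(\cdot,s)\varphi)=0$ and
\begin{equation*}
    (I-\hat{P}_0)u_s^\star(\varphi) = (I-\hat{P}_0)\,\mathcal{K}_s^\eta\bigl(\tilde{R}_{\delta,s}(u_s^\star(\varphi))\bigr).
\end{equation*}

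The key observation for the next step is that $R_{\delta,s}(t,\cdot)$ is supported in the ball of radius $4N\delta$: writing $v=P_0(s)v+(P_-(s)+P_+(s))v$ and using the triangle inequality, $\|v\|\geq 4N\delta$ forces one of $\|P_0(s)v\|$, $\|(P_-(s)+P_+(s))v\|$ to be $\geq 2N\delta$, which makes the corresponding cut-off factor in the definition of $R_{\delta,s}$ vanish. Combining this with \Cref{prop:Lrdelta} (global $L_{R_\delta}$-Lipschitz continuity of $R_{\delta,s}(t,\cdot)$ together with $R_{\delta,s}(t,0)=0$, which follows from \eqref{eq:nonlinearterms}) yields the uniform bound $\|R_{\delta,s}(t,v)\|\leq 4N\delta\,L_{R_\delta}$ for every $(t,v)\in\mathbb{R}\times X$. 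Consequently $f:=\tilde{R}_{\delta,s}(u_s^\star(\varphi))$ is continuous and bounded, i.e.\ $f\in\BC_s^0(\mathbb{R},X^{\odot\star})$ with $\|f\|_{0,s}\leq 4N\delta\,L_{R_\delta}$, and since the defining weak$^\star$ integrals do not depend on the weight, $\mathcal{K}_s^\eta f$ and $\mathcal{K}_s^0 f$ agree.

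Finally I would invoke part 3 of \Cref{prop:ketas}: the map $f\mapsto (I-P_0(\cdot))(\mathcal{K}_s^0 f)(\cdot)$ is bounded from $\BC_s^0(\mathbb{R},X^{\odot\star})$ to $\BC_s^0(\mathbb{R},X)$, uniformly in $s$, by $C_0:=\|j^{-1}\|K_\varepsilon N\bigl(\tfrac{1}{-a}+\tfrac{1}{b}\bigr)$ — this is the $\eta=0$ version of the estimate, which converges precisely because $a<0<b$ and because $I-\hat{P}_0$ kills the $P_0^{\odot\star}$-integral, the only contribution that blows up as $\eta\downarrow 0$. Putting everything together,
\begin{equation*}
    \|(I-\hat{P}_0)u_s^\star(\varphi)\|_{0,s} \;\leq\; C_0\|f\|_{0,s} \;\leq\; 4NC_0 L_{R_\delta}\,\delta .
\end{equation*}
Since $L_{R_\delta}\to 0$ as $\delta\downarrow 0$ (\Cref{prop:Lrdelta}, \Cref{cor:lipschitz2}), shrinking $\delta$ so that $4C_0 L_{R_\delta}<1$ gives $\|(I-\hat{P}_0)u_s^\star(\varphi)\|_{0,s}<N\delta$, uniformly in $s$ and $\varphi$. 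The main obstacle is the bounded-support observation for $R_{\delta,s}$: it is what upgrades the a priori information $u_s^\star(\varphi)\in\BC_s^\eta$ to a bound in the unweighted norm $\|\cdot\|_{0,s}$; once one has $f\in\BC_s^0$ with the stated bound, the rest is a direct application of results already proved.
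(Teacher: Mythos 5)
Your proposal is correct and follows essentially the same route as the paper's proof: apply $I-\hat P_0$ to the fixed-point identity, note that the $U(\cdot,s)\varphi$ term is killed because $U(t,s)\varphi=U_0(t,s)\varphi\in X_0(t)$, and then bound $(I-\hat P_0)\mathcal{K}_s^\eta(\tilde R_{\delta,s}(u_s^\star(\varphi)))$ in the unweighted norm via part 3 of \Cref{prop:ketas} together with a uniform $O(\delta L_{R_\delta})$ bound on the modified nonlinearity, finishing with $L_{R_\delta}\to 0$ as $\delta\downarrow 0$. If anything, you make explicit the bounded-support observation for $R_{\delta,s}$ that the paper only cites implicitly through \Cref{cor:lipschitz2}; the immaterial extra factor of $N$ in your constant does not affect the conclusion.
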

\begin{proof}
Since $u_s^\star(\varphi) = \mathcal{G}_s(u_s^\star(\varphi),\varphi) = U(\cdot,s)\varphi + \mathcal{K}_s^\eta(\tilde{R}_{\delta,s}(u_s^\star(\varphi)))$ we have that
\begin{equation*}
    (I-\hat{P}_0)u_s^\star(\varphi) = (I-\hat{P}_0)[\mathcal{K}_s^\eta(\tilde{R}_{\delta,s}(u_s^\star(\varphi)))],
\end{equation*}
because for any $t \in \mathbb{R}$ we have that
\begin{equation*}
    [(I-\hat{P}_0)U(\cdot,s)\varphi](t) = U(t,s)\varphi - P_0(t)U(t,s)\varphi = 0,
\end{equation*}
since $U(t,s)\varphi = U_0(t,s)\varphi \in X_0(t)$ due to part 6 of \Cref{hyp:CMT} and the fact hat $\varphi \in X_0(s)$. It follows from the operator norm bounds in \Cref{prop:ketas}, and the bound for $\tilde{R}_{\delta,s}$ in \Cref{cor:lipschitz2} that
\begin{equation*}
    \|(I-\hat{P}_0)u_s^\star(\varphi)\|_{0,s} = \|(I-\hat{P}_0)[\mathcal{K}_s^\eta(\tilde{R}_{\delta,s}(u_s^\star(\varphi)))]\|_{0,s} \leq  4 \delta \|j^{-1}\|K_\varepsilon N  L_{R_\delta} \bigg( \frac{1}{-a} + \frac{1}{b} \bigg),
\end{equation*}
which is less than or equal to $N\delta$ if we choose 
\begin{equation*}
    L_{R_\delta} \leq \frac{1}{4 \|j^{-1}\| K_\varepsilon } \bigg(\frac{1}{-a} + \frac{1}{b} \bigg)^{-1},
\end{equation*}
which is possible since $L_{R_\delta} \to 0$ as $\delta \downarrow 0$.
\end{proof}

Let us introduce some notation. For a Banach space $X$, define the sets $\BC_s^\infty(\mathbb{R},X) := \cup_{\eta > 0} \BC_s^\eta(\mathbb{R},X)$ and $\BC_s^\infty(\mathbb{R},X^{\odot \star}) := \cup_{\eta > 0} \BC_s^\eta(\mathbb{R},X^{\odot \star})$ together with the space
\begin{equation*}
    V_s^\eta(\mathbb{R},X) := \{u \in \BC_s^\eta(\mathbb{R},X) \ : \ \|(I-\hat{P_0})u\|_{0,s} < \infty \},
\end{equation*}
with the norm
\begin{equation*}
    \|u\|_{V_s^{\eta}} := \|\hat{P}_0 u \|_{\eta,s} + \|(I-\hat{P}_0)u\|_{0,s}
\end{equation*}
such that $V_s^\eta(\mathbb{R},X)$ becomes a Banach space and is continuously embedded in $\BC_s^\eta(\mathbb{R},X)$. Define in addition for a sufficiently small $\delta > 0$ the open set
\begin{equation*}
    V_{\delta,s}^\eta(\mathbb{R},X) := \{ u \in V_s^\eta(\mathbb{R},X) \ : \ \|(I-\hat{P_0})u\|_{0,s} < N\delta \},
\end{equation*}
and notice that this set is non-empty due to \Cref{lemma:P0hat}. Define similarly as before the set $V_{\delta,s}^\infty(\mathbb{R},X) := \cup_{\eta > 0}  V_{\delta,s}^\eta(\mathbb{R},X)$. For Banach spaces $E,E_1,E_2,\dots,E_p$ with $p \geq 1$ we denote by $\mathcal{L}^p(E_1 \times \dots \times E_p, E)$ the Banach space of $E$-valued continuous $p$-linear maps defined on the $E_1 \times \dots \times E_p$. When there are $p$ identical copies in this Cartesian product, we simply write $E^p := E \times \dots \times E$, where this notation will also be used with $E$ is just simply a set.

If we chose $\delta$ as in \Cref{lemma:P0hat}, then the map $u \mapsto \tilde{R}_{\delta,s}(u)$ is of the class $C^k$, when $u \in V_{\delta,s}^\infty(\mathbb{R},X)$. Consider any pair of integers $p,q \geq 0$ with $p + q \leq k$ and notice that the norm $\|D_1^p D_2^q R_{\delta,s}(t,\varphi)\|$ is uniformly bounded on $\mathbb{R} \times V_{\delta,s}^\infty(\mathbb{R},X)$. Hence, for any $u \in V_{\delta,s}^\infty(\mathbb{R},X)$ we can define the map $R_{\delta,s}^{(p,q)}(u) : \BC_s^\infty(\mathbb{R},X)^p \to \BC_s^\infty(\mathbb{R},X^{\odot \star})$ as
\begin{equation*}
    R_{\delta,s}^{(p,q)}(u)(v_1,\dots,v_q)(t) := D_1^pD_2^q R_{\delta,s}(t,u(t))(v_1(t),\dots,v_q(t)), \quad \forall v_1,\dots,v_q \in \BC_s^\infty(\mathbb{R},X).
\end{equation*}
The following two lemmas will be crucial for the proof of \Cref{thm:smoothnesscmt}.

\begin{lemma}[{\cite[Lemma XII.7.3]{Diekmann1995} and \cite[Proposition 8.1]{Hupkes2008}}] \label{lemma: smoothness1}
Consider integers $p \geq 0$ and $q \geq 0$ with $p+q \leq k$ together with integers $\mu_1,\dots,\mu_q > 0$ such that $\mu = \mu_1 + \dots +\mu_q$ and consider $ \eta > q \mu > 0$. Then,
\begin{equation*}
    \tilde{R}_{\delta,s}^{(p,q)}(u) \in \mathcal{L}^q(\BC_s^{\mu_1}(\mathbb{R},X) \times \dots \times \BC_s^{\mu_q}(\mathbb{R},X), \BC^\eta(\mathbb{R},X^{\odot \star})), \quad \forall u \in V_{\delta,s}^\infty(\mathbb{R},X).
\end{equation*}
Furthermore, consider any $ 0 \leq l \leq k-(p+q)$ and $\sigma > 0$. If $\eta > q \mu + l \sigma$, then the map $R_{\delta,s}^{(p,q)} : V_{\delta,s}^\sigma(\mathbb{R},X) \to \mathcal{L}^q(\BC_s^{\mu_1}(\mathbb{R},X) \times \dots \times \BC_s^{\mu_p}(\mathbb{R},X), \BC^\eta(\mathbb{R},X^{\odot \star}))$ is $C^l$-smooth, with $D^l R_{\delta,s}^{(p,q)} = R_{\delta,s}^{(p,q + l)}$.
\end{lemma}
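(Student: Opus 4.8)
The plan is to follow the scheme for substitution operators on scales of Banach spaces from \cite[Lemma XII.7.3]{Diekmann1995} and \cite[Proposition 8.1]{Hupkes2008}. The structural fact that makes everything work is that the $\delta$-modification has ``compact $\varphi$-support'': since $P_0(s)+P_{-}(s)+P_{+}(s)=I$, the two cut-offs occurring in $R_{\delta,s}$ force $R_{\delta,s}(t,\varphi)=0$, and hence $D_1^pD_2^qR_{\delta,s}(t,\varphi)=0$, whenever $\|\varphi\|\geq 4N\delta$. Together with the uniform bound on $\|D_1^pD_2^qR_{\delta,s}\|$ noted just before the lemma, this gives a global bound $M$ for each $D_1^pD_2^qR_{\delta,s}$ with $p+q\leq k$, and (following the cited references) a common modulus of uniform continuity $\omega$ for these derivatives. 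These two facts, $M$ and $\omega$, are all I would use about $R_{\delta,s}$.

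First I would settle the multilinearity and boundedness claim. For fixed $u\in V_{\delta,s}^{\infty}(\mathbb{R},X)$, the map $(v_1,\dots,v_q)\mapsto \tilde R_{\delta,s}^{(p,q)}(u)(v_1,\dots,v_q)$ is $q$-linear because $D_2^qR_{\delta,s}(t,\cdot)$ is $q$-linear in its last $q$ arguments. For the bound I would use $\|v_i(t)\|\leq e^{\mu_i|t-s|}\|v_i\|_{\mu_i,s}$ to obtain, pointwise in $t$,
\[
e^{-\eta|t-s|}\bigl\|D_1^pD_2^qR_{\delta,s}(t,u(t))(v_1(t),\dots,v_q(t))\bigr\|\leq M\,e^{(\mu-\eta)|t-s|}\prod_{i=1}^{q}\|v_i\|_{\mu_i,s},
\]
which is bounded over $t\in\mathbb{R}$ since $\eta>q\mu\geq\mu$; taking the supremum yields $\tilde R_{\delta,s}^{(p,q)}(u)\in\mathcal{L}^q(\BC_s^{\mu_1}(\mathbb{R},X)\times\dots\times\BC_s^{\mu_q}(\mathbb{R},X),\BC_s^\eta(\mathbb{R},X^{\odot \star}))$, which is the first assertion.

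For the smoothness statement I would argue by induction on $l$. The base case $l=0$ is the continuity of $u\mapsto R_{\delta,s}^{(p,q)}(u)$ into that operator space: for $u_n\to u$ in $V_{\delta,s}^{\sigma}(\mathbb{R},X)$ one has the pointwise bound
\[
e^{-\eta|t-s|}\bigl\|\bigl(D_1^pD_2^qR_{\delta,s}(t,u_n(t))-D_1^pD_2^qR_{\delta,s}(t,u(t))\bigr)(v_1(t),\dots,v_q(t))\bigr\|\leq e^{(\mu-\eta)|t-s|}\bigl(\omega(\|u_n(t)-u(t)\|)\wedge 2M\bigr)\prod_{i=1}^{q}\|v_i\|_{\mu_i,s},
\]
and I would split $\mathbb{R}$ into $\{|t-s|\leq T_0\}$, where $\|u_n(t)-u(t)\|\leq e^{\sigma T_0}\|u_n-u\|_{\sigma,s}$ is uniformly small so that $\omega$ controls the bracket, and $\{|t-s|>T_0\}$, where $e^{(\mu-\eta)|t-s|}\leq e^{(\mu-\eta)T_0}$ is small (here $\eta>\mu$) while the bracket stays $\leq 2M$; choosing first $T_0$ and then $n$ large gives continuity. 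For the inductive step, assuming $p+q<k$, I would show $R_{\delta,s}^{(p,q)}$ is differentiable with derivative $R_{\delta,s}^{(p,q+1)}$, reading the new slot as accepting increments $h$ in $\BC_s^{\sigma}(\mathbb{R},X)$: the first order Taylor remainder of $\varphi\mapsto D_1^pD_2^qR_{\delta,s}(t,\varphi)$ at $u(t)$ in direction $h(t)$, weighted by $e^{-\eta|t-s|}$ and evaluated at $(v_1(t),\dots,v_q(t))$, is dominated via the mean value inequality by $e^{(\mu+\sigma-\eta)|t-s|}\bigl(\omega(\|h(t)\|)\wedge 2M\bigr)\|h\|_{\sigma,s}\prod_i\|v_i\|_{\mu_i,s}$, and the same split argument (now needing $\eta>q\mu+\sigma$) shows this is $o(\|h\|_{\sigma,s})$ uniformly in the $v_i$. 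Continuity of $u\mapsto R_{\delta,s}^{(p,q+1)}(u)$ is again the $l=0$ case with $q$ replaced by $q+1$, and iterating $l$ times — the running budget $\eta>q\mu+l\sigma$ supplying exactly the surplus decay needed at each stage — yields $D^lR_{\delta,s}^{(p,q)}=R_{\delta,s}^{(p,q+l)}$ and the $C^l$-smoothness.

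The hard part is the splitting argument just described: the increments $h(t)$ and the differences $u_n(t)-u(t)$ are \emph{not} uniformly small over $\mathbb{R}$ — they may grow like $e^{\sigma|t-s|}$ — so uniform continuity of the derivatives of $R_{\delta,s}$ cannot be applied uniformly in $t$. The whole purpose of the exponential gaps $\eta>q\mu+l\sigma$ in the hypotheses is to compensate for exactly this: on the compact part $|t-s|\leq T_0$ the modulus $\omega$ does the work, while on its complement the surplus decay $e^{(\mu+l\sigma-\eta)|t-s|}$ beats the uniformly bounded remainder. Everything else — the pointwise multilinear algebra, the weighted supremum bookkeeping, and the passage from pointwise estimates to estimates in $\mathcal{L}^q(\cdots)$ — is routine.
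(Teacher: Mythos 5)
The paper does not prove this lemma at all — it is imported verbatim from \cite[Lemma XII.7.3]{Diekmann1995} and \cite[Proposition 8.1]{Hupkes2008} — and your sketch is a correct reconstruction of exactly the argument used there: the uniform bound $M$ coming from the compact $\varphi$-support of the $\delta$-modification, the weighted pointwise multilinear estimate with surplus decay $e^{(\mu+l\sigma-\eta)|t-s|}$, and the split of $\mathbb{R}$ into a compact part (controlled by the modulus $\omega$) and a tail (controlled by the exponential gap). The only point to keep honest, as in the cited references, is where $\omega$ comes from: for derivatives of order $<k$ it is the Lipschitz modulus supplied by the next bounded derivative, while at top order $p+q=k$ (needed only for the $l=0$ continuity claim) one must separately argue uniform continuity of $D_1^pD_2^kR_{\delta,s}(t,\cdot)$ on the bounded support, uniformly in $t$.
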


\begin{lemma}[{\cite[Lemma XII.7.6]{Diekmann1995} and \cite[Proposition 8.2]{Hupkes2008}}] \label{lemma: smoothness4}
Consider integers $p \geq 0$ and $q \geq 0$ with $p+q < k$ together with integers $\mu_1,\dots,\mu_q > 0$ such that $\mu = \mu_1 + \dots +\mu_q$. Let $ \eta > q \mu + \sigma$ for some $\sigma > 0$ and consider a $C^1$-smooth map $\Phi_s : X_0(s) \to V_{\delta,s}^\sigma(\mathbb{R},X)$. Then the map $\tilde{R}_{\delta,s}^{(p)} \circ \Phi_s : X_0(s) \to \mathcal{L}^q(\BC_s^{\mu_1}(\mathbb{R},X) \times \dots \times \BC_s^{\mu_q}(\mathbb{R},X), \BC^\eta(\mathbb{R},X^{\odot \star}))$ is $C^1$-smooth with
\begin{equation*}
    D(\tilde{R}_{\delta,s}^{(p,q)} \circ \Phi_s)(\varphi)(v_1,\dots,v_q)(t) = \tilde{R}_{\delta,s}^{(p,q+1)}(\Phi_s(\varphi))(\Phi_s'(\varphi)(t),v_1(t),\dots,v_q(t)).
\end{equation*}
\end{lemma}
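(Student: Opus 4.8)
The plan is to obtain \Cref{lemma: smoothness4} as a direct consequence of \Cref{lemma: smoothness1} combined with the classical chain rule for Fr\'echet derivatives, along the same lines as the proofs of \cite[Lemma XII.7.6]{Diekmann1995} and \cite[Proposition 8.2]{Hupkes2008}. The key observation is that \Cref{lemma: smoothness1} already identifies the derivative of the substitution map $R_{\delta,s}^{(p,q)}$ with the higher-order substitution map $R_{\delta,s}^{(p,q+1)}$, so that differentiating its composition with $\Phi_s$ reduces to a bookkeeping exercise on the exponents.

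First I would apply \Cref{lemma: smoothness1} with $l = 1$. This choice is admissible because $p+q < k$ forces $1 \le k-(p+q)$, and the standing assumption $\eta > q\mu + \sigma$ is precisely the inequality $\eta > q\mu + l\sigma$ demanded there with $l=1$. Hence $R_{\delta,s}^{(p,q)} : V_{\delta,s}^\sigma(\mathbb{R},X) \to \mathcal{L}^q(\BC_s^{\mu_1}(\mathbb{R},X) \times \dots \times \BC_s^{\mu_q}(\mathbb{R},X), \BC^\eta(\mathbb{R},X^{\odot \star}))$ is $C^1$-smooth with $D R_{\delta,s}^{(p,q)} = R_{\delta,s}^{(p,q+1)}$, where in the latter the additional (first) argument carries a direction in $V_{\delta,s}^\sigma(\mathbb{R},X)$, viewed inside $\BC_s^\sigma(\mathbb{R},X)$ through the continuous embedding.

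Next, since $\Phi_s : X_0(s) \to V_{\delta,s}^\sigma(\mathbb{R},X)$ is $C^1$-smooth by assumption, the composition $\tilde R_{\delta,s}^{(p,q)} \circ \Phi_s = R_{\delta,s}^{(p,q)} \circ \Phi_s$ is $C^1$-smooth as a composition of $C^1$ maps between Banach spaces, and the chain rule yields, for $\varphi,\psi \in X_0(s)$,
\[
    D\bigl(\tilde R_{\delta,s}^{(p,q)} \circ \Phi_s\bigr)(\varphi)\psi = R_{\delta,s}^{(p,q+1)}\bigl(\Phi_s(\varphi)\bigr)\bigl((D\Phi_s(\varphi)\psi)(\cdot),\cdot,\dots,\cdot\bigr).
\]
Evaluating this $q$-linear map at $(v_1,\dots,v_q)$ and at a point $t\in\mathbb{R}$, and unwinding the pointwise definitions of the substitution operators, turns the right-hand side into $D_1^pD_2^{q+1}R_{\delta,s}(t,\Phi_s(\varphi)(t))\bigl((D\Phi_s(\varphi)\psi)(t),v_1(t),\dots,v_q(t)\bigr)$, which is exactly the claimed identity once $\Phi_s'(\varphi)(t)$ is read as the shorthand $(D\Phi_s(\varphi)\psi)(t)$ for the (suppressed) direction $\psi$.

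The substantive work here sits in \Cref{lemma: smoothness1} rather than in this lemma: the only delicate issue is the loss-of-regularity bookkeeping, namely that differentiating $R_{\delta,s}^{(p,q)}$ consumes one extra direction living in the weaker space $\BC_s^\sigma(\mathbb{R},X)$, which is precisely why the exponent hypothesis is strengthened from $\eta > q\mu$ (mere $q$-linear continuity of the substitution map) to $\eta > q\mu + \sigma$. Once \Cref{lemma: smoothness1} is granted, the remainder is routine: one only has to verify that the abstract chain-rule derivative, which a priori is an element of $\mathcal{L}(X_0(s), \mathcal{L}^q(\cdots,\BC^\eta))$, admits the stated pointwise-in-$t$ representation, and this is immediate from the defining formulas for $R_{\delta,s}^{(p,q)}$, $\tilde R_{\delta,s}^{(p,q+1)}$ and the substitution operators.
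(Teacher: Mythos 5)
The paper does not prove this lemma at all --- it is quoted verbatim from \cite[Lemma XII.7.6]{Diekmann1995} and \cite[Proposition 8.2]{Hupkes2008} --- so there is no internal proof to compare against; but your argument is correct and is exactly the standard one used in those references. You correctly check that the hypotheses of \Cref{lemma: smoothness1} with $l=1$ are met ($p+q<k$ gives $1\le k-(p+q)$, and $\eta>q\mu+\sigma$ is the required exponent condition), so that $R_{\delta,s}^{(p,q)}$ is $C^1$ from $V_{\delta,s}^{\sigma}(\mathbb{R},X)$ into the relevant space of $q$-linear maps with derivative $R_{\delta,s}^{(p,q+1)}$, and the chain rule applied to the composition with the $C^1$ map $\Phi_s$ yields the stated pointwise formula, with the derivative direction $\psi$ suppressed in the notation $\Phi_s'(\varphi)$ exactly as you observe.
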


So far we have only proven that the center manifold is Lipschitz continuous. Recall from \Cref{thm:sol lipschitz} that we solved the fixed point problem $u = \mathcal{G}_s(u,\varphi)$ for a given $\varphi \in X_0(s)$ in the space $\BC_s^\eta(\mathbb{R},X)$ for a given $\eta \in (0,\min\{-a,b\})$. It turns out that the space $\BC_s^\eta(\mathbb{R},X)$ is not really suited to study additional smoothness of the center manifold. The idea to obtain this is by working with another exponent, say $\tilde{\eta}$, which is chosen high enough to guarantee smoothness, but not too high to loose the contraction property. Hence, a trade-off has to be made. To do this, choose an interval $[\eta_{\minn},\eta_{\maxx}] \subset (0,\min\{-a,b\})$ such that $k \eta_{\minn} < \eta_{\maxx}$ and choose $\delta > 0$ small enough to guarantee that
\begin{equation*}
    L_{R_{\delta}}\|\mathcal{K}_s^\eta\|_{\eta,s} < \frac{1}{4}, \quad \forall \eta \in [\eta_{\minn},\eta_{\maxx}], \ s \in \mathbb{R},
\end{equation*}
which is possible since $L_{R_\delta} \to 0$ as $\delta \downarrow 0$ proven in \Cref{prop:Lrdelta}. Following the proof again of \Cref{thm:sol lipschitz} we obtain for any $\eta \in [\eta_{\minn},\eta_{\maxx}]$ a unique fixed point $u_{\eta,s}^\star : X_0(s) \to \BC_s^\eta(\mathbb{R},X)$ of the equation $u = \mathcal{G}_s(u,\varphi)$. Denote for real numbers $\eta_1 \leq \eta_2$ the continuous embedding operator as $\mathcal{J}_s^{\eta_2,\eta_1} : \BC_s^{\eta_1}(\mathbb{R},X) \hookrightarrow \BC_s^{\eta_2}(\mathbb{R},X)$, then for $\eta_1,\eta_2 \in [\eta_{\minn},\eta_{\maxx}]$ we have that $u_{\eta_2,s}^\star = \mathcal{J}_s^{\eta_2,\eta_1} \circ u_{\eta_1,s}^\star$. These embedding operators will play the role of $J_0$ and $J$ defined in \Cref{lemma:fixedpointsmooth}. The following proof is a combination of \cite[Theorem IX.7.7]{Diekmann1995}, \cite[Theorem 7.1.1]{Church2018} and \cite[Theorem 7.1]{Hupkes2008} to our setting.

\begin{theorem} \label{thm:smoothnesscmt}
For each $l \in \{1,\dots,k\}$ and $\eta \in (l\eta_{\minn},\eta_{\maxx}],$ the mapping $\mathcal{J}_s^{\eta,\eta_{\minn}} \circ {u}_{\eta_{\minn},s}^\star : X_0(s) \to \BC_s^\eta(\mathbb{R},X)$ is of the class $C^l$ provided that $\delta > 0$ is sufficiently small.
\end{theorem}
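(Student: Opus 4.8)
The plan is to prove the statement by induction on $l$, using \Cref{lemma:fixedpointsmooth} (contractions on scales of Banach spaces) as the engine and \Cref{lemma: smoothness1,lemma: smoothness4} to supply the smoothness and uniform boundedness of the substitution operators. Throughout, the fixed point $u_{\eta,s}^\star(\varphi)$ of $u=\mathcal{G}_s(u,\varphi)$ will be regarded as taking values in the smaller space $V_{\delta,s}^\eta(\mathbb{R},X)$, which is legitimate by \Cref{lemma:P0hat}; this is exactly the space on which $\tilde{R}_{\delta,s}$ is $C^k$ and on which all the partial derivatives $\tilde{R}_{\delta,s}^{(p,q)}$ are defined and uniformly bounded. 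The smallness of $\delta$ will be taken so that $L_{R_\delta}\|\mathcal{K}_s^\eta\|_{\eta,s}<\tfrac14$ for all $\eta\in[\eta_{\minn},\eta_{\maxx}]$ and so that \Cref{lemma:P0hat} applies; this makes every fixed-point problem that appears a contraction with constant $\kappa=\tfrac14$, uniformly in $s$ and in the relevant exponent.

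For the base case $l=1$, fix $\eta\in(\eta_{\minn},\eta_{\maxx}]$ and choose an intermediate exponent $\nu$ with $\eta_{\minn}<\nu<\eta$. I would apply \Cref{lemma:fixedpointsmooth} with $Y_0=\BC_s^{\eta_{\minn}}(\mathbb{R},X)$, $Y=\BC_s^{\nu}(\mathbb{R},X)$, $Y_1=\BC_s^{\eta}(\mathbb{R},X)$, the canonical embeddings $J_0=\mathcal{J}_s^{\nu,\eta_{\minn}}$ and $J=\mathcal{J}_s^{\eta,\nu}$, parameter space $\Lambda=X_0(s)$, and $f=\mathcal{G}_s$. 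The candidate derivative operators are $f^{(1)}(u,\varphi)v:=\mathcal{K}_s^{\cdot}\bigl(\tilde{R}_{\delta,s}^{(0,1)}(u)v\bigr)$ and its analogue $f_1^{(1)}$ on $Y_1$, while $D_2\mathcal{G}_s(u,\varphi)=U(\cdot,s)|_{X_0(s)}$ is bounded linear, hence smooth in $\varphi$. Hypothesis (2) of \Cref{lemma:fixedpointsmooth} follows from the choice of $\delta$ together with \Cref{cor:lipschitz2,prop:ketas} (note $\|\tilde{R}_{\delta,s}^{(0,1)}(u)\|\le L_{R_\delta}$); hypothesis (3) is exactly the consistency $u_{\eta_2,s}^\star=\mathcal{J}_s^{\eta_2,\eta_1}\circ u_{\eta_1,s}^\star$ already recorded; and hypotheses (1), (4), (5) — the $C^1$-smoothness of the ``glued'' maps $Jf(J_0\cdot,\cdot)$ and $D_2f$, and the continuity of $(u,\varphi)\mapsto J\circ f^{(1)}(J_0u,\varphi)$ — are precisely what \Cref{lemma: smoothness1} yields, applied with $p=0$, $q=1$, $\mu=\nu$, so that the requirement $\eta>q\mu$ reads $\eta>\nu$, which we arranged. \Cref{lemma:fixedpointsmooth} then gives that $\mathcal{J}_s^{\eta,\eta_{\minn}}\circ u_{\eta_{\minn},s}^\star$ is $C^1$ and identifies its derivative as the unique solution in $\mathcal{L}(X_0(s),\BC_s^\eta(\mathbb{R},X))$ of the affine equation $A=f^{(1)}(u_{\eta,s}^\star(\varphi),\varphi)A+U(\cdot,s)|_{X_0(s)}$.

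For the inductive step, I would assume that for every $\eta\in((l-1)\eta_{\minn},\eta_{\maxx}]$ the map $\mathcal{J}_s^{\eta,\eta_{\minn}}\circ u_{\eta_{\minn},s}^\star$ is $C^{l-1}$ and that its $(l-1)$-st derivative is the unique fixed point of an affine equation whose linear part is the same contraction $A\mapsto f^{(1)}(u_{\eta,s}^\star(\varphi),\varphi)A$ and whose inhomogeneous term is built from the operators $\tilde{R}_{\delta,s}^{(0,q)}$, $2\le q\le l-1$, composed with lower-order derivatives of $u_{\cdot,s}^\star$. Given $\eta\in(l\eta_{\minn},\eta_{\maxx}]$, I would choose intermediate exponents in $((l-1)\eta_{\minn},\eta)$ and apply \Cref{lemma:fixedpointsmooth} once more to this affine fixed-point equation: the contraction constant is again $\tfrac14$, consistency under embeddings is inherited, and the $C^1$-dependence on $\varphi$ of the inhomogeneity (together with the needed continuity statements) follows from \Cref{lemma: smoothness4}, with the bookkeeping inequality $\eta>q\mu+\sigma$ satisfied because one differentiation costs at most $\eta_{\minn}$ of exponent and $\eta>l\eta_{\minn}$. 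This upgrades the map to $C^l$ on $(l\eta_{\minn},\eta_{\maxx}]$ and produces the fixed-point characterisation of the $l$-th derivative required to continue. Since $l\le k$ and $k\eta_{\minn}<\eta_{\maxx}$, the interval $(l\eta_{\minn},\eta_{\maxx}]$ is nonempty at every stage, so the induction runs through $l=k$.

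The main obstacle is the exponent bookkeeping: at each stage one must exhibit intermediate exponents that simultaneously (i) stay in $[\eta_{\minn},\eta_{\maxx}]\subset(0,\min\{-a,b\})$ so that $\mathcal{K}_s^{\cdot}$ is bounded and the relevant map is a contraction, (ii) satisfy every strict inequality $\eta>q\mu+l\sigma$ demanded by \Cref{lemma: smoothness1,lemma: smoothness4} for the substitution operators to have the right smoothness, and (iii) fit the nested embedding structure $Y_0\hookrightarrow Y\hookrightarrow Y_1$ required by \Cref{lemma:fixedpointsmooth}. The condition $k\eta_{\minn}<\eta_{\maxx}$ built into the choice of the interval is exactly the room needed to do this for all $l\le k$. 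A secondary technical nuisance is checking that the fixed-point maps and their derivatives actually take values in the spaces $V_{\delta,s}^\eta(\mathbb{R},X)$ on which $\tilde{R}_{\delta,s}$ is $C^k$, and keeping the affine fixed-point equations for the higher derivatives in a form strong enough to be propagated through the induction.
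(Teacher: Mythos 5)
Your proposal follows essentially the same route as the paper: induction on $l$, with \Cref{lemma:fixedpointsmooth} applied on a scale of the weighted spaces $\BC_s^{\eta}(\mathbb{R},X)$ (and, in the inductive step, of the multilinear-map spaces $\mathcal{L}^l(X_0(s)^l,\BC_s^{\eta}(\mathbb{R},X))$), with \Cref{lemma: smoothness1} and \Cref{lemma: smoothness4} supplying the smoothness and uniform bounds of the substitution operators, and with each successive derivative characterized as the unique solution of an affine fixed-point equation whose linear part is the contraction $A \mapsto \mathcal{K}_s^{\cdot}\circ\tilde{R}_{\delta,s}^{(0,1)}(u_{\eta_{\minn},s}^\star(\varphi))A$. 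The only notable difference is that in the base case the paper takes $Y_0 = V_{\delta,s}^{\eta_{\minn}}(\mathbb{R},X)$ rather than $\BC_s^{\eta_{\minn}}(\mathbb{R},X)$, since $\tilde{R}_{\delta,s}$ is $C^k$ only on that smaller set — a point you already flag via \Cref{lemma:P0hat}, so this is a matter of bookkeeping rather than a gap.
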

\begin{proof}
We prove this by induction on $l$. Let $l = k =1$ and $\eta \in (\eta_{\minn},\eta_{\maxx}]$. We show that \Cref{lemma:fixedpointsmooth} applies with the Banach spaces
\begin{equation*}
    Y_0 = V_\delta^{\eta_{\minn},s}(\mathbb{R},X), \quad Y = \BC_s^{\eta_{\minn}}(\mathbb{R},X), \quad Y_1 = \BC_s^{\eta}(\mathbb{R},X), \quad \Lambda = X_0(s)
\end{equation*}
and operators
\begin{align*}
    f(u,\varphi) &= U(\cdot,s)\varphi + \mathcal{K}_s^{\eta_{\minn}}(\tilde{R}_{\delta,s}(u)), \quad \forall (u,\varphi) \in \BC_s^{\eta_{\minn}}(\mathbb{R},X) \times X_0(s), \\
    f^{(1)}(u,\varphi) &=  \mathcal{K}_s^\eta \circ \tilde{R}_{\delta,s}^{(0,1)}(u) \in \mathcal{L}(\BC_s^{\eta}(\mathbb{R},X)), \quad \forall (u,\varphi) \in  V_{\delta,s}^{\eta}(\mathbb{R},X) \times X_0(s), \\
    f_1^{(1)}(u,\varphi) &=  \mathcal{K}_s^{\eta_{\minn}} \circ \tilde{R}_{\delta,s}^{(0,1)}(u) \in \mathcal{L}(\BC_s^{\eta_{\minn}}(\mathbb{R},X)), \quad \forall (u,\varphi) \in  V_{\delta,s}^{\eta_{\minn}}(\mathbb{R},X) \times X_0(s),
\end{align*}
with embeddings $J = \mathcal{J}_s^{\eta, \eta_{\minn}}$ and $J_0 : V_\delta^{\eta_{\minn},s}(\mathbb{R},X) \hookrightarrow \BC_s^{\eta_{\minn}}(\mathbb{R},X)$. To verify condition 1 of \Cref{lemma:fixedpointsmooth}, we must show that the map
\begin{equation*}
    V_\delta^{\eta_{\minn},s}(\mathbb{R},X) \times X_0(s) \ni (u,\varphi) \mapsto g(u,\varphi) = \mathcal{J}_s^{\eta, \eta_{\minn}} [U(\cdot,s)\varphi + \mathcal{K}_s^{\eta_{\minn}}(\tilde{R}_{\delta,s}(J_0u))] \hspace{-2pt} \in \BC_s^{\eta}(\mathbb{R},X)
\end{equation*}
is $C^1$-smooth. Notice that the embedding operator $J$ is $C^1$-smooth, as well as $\varphi \mapsto U(\cdot,s)\varphi$. Furthermore, from \Cref{lemma: smoothness1} the map $J_0u \mapsto \tilde{R}_{\delta,s}(J_0u)$ is $C^1$-smooth and hence $g$ is $C^1$-smooth by the continuity of the linear embedding $J_0$. Verification of the equalities $D_1 g(y_0,\lambda) \xi = Jf^{(1)}(J_0y_0,\lambda)J_0$ and $ Jf^{(1)}(J_0y_0,\lambda)y = f_1^{(1)}(J_0y_0,\lambda)Jy$ is straightforward. 

Let us now verify condition 2. The Lipschitz claim follows immediately from the small Lipschitz constant for $U(\cdot,s)\varphi + \mathcal{K}_s^{\eta_{\minn}}(\tilde{R}_{\delta,s}(u))$ by choosing $\delta$ sufficiently small. Furthermore, the uniform boundedness claims hold because the embedding operators are bounded. 

For condition 3, the unique fixed point is $u_{\eta_{\minn},s}^\star = J_0 \circ \Phi$, where $\Phi : X_0(s) \to V_{\delta,s}^{\eta_{\minn}}(\mathbb{R},X)$ is defined by $\Phi(\varphi) := u_{\eta_{\minn},s}^\star(\varphi)$ for all $\varphi \in X_0(s)$. The map $\Phi$ is well-defined due to \Cref{lemma:P0hat} and is continuous due to \Cref{thm:sol lipschitz}. 

To verify condition 4, we must check that the map
\begin{equation*}
    V_\delta^{\eta_{\minn},s}(\mathbb{R},X) \times X_0(s) \ni (u,\varphi) \mapsto f(J_0u,\varphi) = U(\cdot,s)\varphi + \mathcal{K}_s^{\eta_{\minn}}(\tilde{R}_{\delta,s}(J_0u)) \in \BC_s^{\eta_{\minn}}(\mathbb{R},X)
\end{equation*}
has continuous partial derivative in the second variable. This is clear since the map $\varphi \mapsto f(J_0u,\varphi)$ is linear. 

To verify condition 5, we have to check that the map
\begin{equation*}
    (u,\varphi) \mapsto J \circ f^{(1)}(J_0u,\varphi) = \mathcal{J}_s^{\eta,\eta_{\minn}} \circ  \mathcal{K}_s^\eta \circ \tilde{R}_{\delta,s}^{(1)}(u)
\end{equation*}
from $V_\delta^{\eta_{\minn},s}(\mathbb{R},X) \times X_0(s)$ to $\mathcal{L}(X_0(s),\BC_s^\eta(\mathbb{R},X))$
is continuous. This again follows from the fact that the embedding operators are continuous and the smoothness of $\tilde{R}_{\delta,s}$ from \Cref{lemma: smoothness1}.

Since all conditions of \Cref{lemma:fixedpointsmooth} are satisfied, we conclude that $\mathcal{J}_s^{\eta,\eta_{\minn}} \circ {u}_{\eta_{\minn},s}^\star$ is $C^1$-smooth and the Fr\'echet derivative $D(\mathcal{J}_s^{\eta,\eta_{\minn}} \circ {u}_{\eta_{\minn},s}^\star) \in \mathcal{L}(X_0(s), \BC_s^\eta(\mathbb{R},X))$ is the unique solution $w^{(1)}$ of the equation
\begin{equation} \label{eq:w^(1)}
    w^{(1)} = \mathcal{K}_s^{\eta_{\minn}} \circ \tilde{R}_{\delta,s}^{(1)}(u_{s,\eta_{\minn}}^\star(\varphi))w^{(1)} + U(\cdot,s) =: F_{\eta_{\minn}}^{(1)}(w^{(1)},\varphi),
\end{equation}
where $F_{\eta_{\minn}}^{(1)} : \mathcal{L}(X_0(s),\BC_s^\eta(\mathbb{R},X)) \times X_0(s) \to \mathcal{L}(X_0(s),\BC_s^\eta(\mathbb{R},X))$. Notice that $F_{\eta_{\minn}}^{(1)}(\cdot,\varphi)$ is a uniform contraction for each $\eta \in [\eta_{\minn},\eta_{\maxx}]$ and hence its unique fixed point $u_{\eta_{\minn},s}^{\star,(1)}(\varphi) \in \mathcal{L}(X_0(s),\BC_s^{\eta_{\minn}}(\mathbb{R},X)) \subseteq \mathcal{L}(X_0(s), \BC_s^\eta(\mathbb{R},X)))$ for $\eta \geq \eta_{\minn}$. Also, the mapping $u_{\eta_{\minn},s}^{\star,(1)} : X_0(s) \to \BC_s^\eta(\mathbb{R},X))$ is continuous if $\eta \in (\eta_{\minn},\eta_{\maxx}]$.

Now, consider any integer $1 \leq l < k$ and suppose that for all $1 \leq q \leq l$ and all $ \eta \in (q \eta_{\minn},\eta_{\maxx})$ the mapping $\mathcal{J}_s^{\eta,\eta_{\minn}} \circ {u}_{\eta_{\minn},s}^\star$ is $C^q$-smooth with $D^q(\mathcal{J}_s^{\eta,\eta_{\minn}} \circ {u}_{\eta_{\minn},s}^\star) = \mathcal{J}_s^{\eta,\eta_{\minn}} \circ {u}_{\eta_{\minn},s}^{\star,(q)}$ and ${u}_{\eta_{\minn},s}^{\star,(q)}(\varphi) \in \mathcal{L}^q(X_0(s)^q, \BC_s^{q\eta_{\minn}}(\mathbb{R},X))$ such that the mapping $\mathcal{J}_s^{\eta,\eta_{\minn}} \circ {u}_{\eta_{\minn},s}^{\star,(q)} : X_0(s) \to \mathcal{L}^q(X_0(s)^q, \BC_s^\eta(\mathbb{R},X))$ is continuous for $\eta \in (q\eta_{\minn},\eta_{\maxx}]$. Suppose also for any $\varphi \in X_0(s)$ that $u_{\eta_{\minn},s}^{\star,(l)}(\varphi)$ is the unique solution $w^{(l)}$ of an equation of the form
\begin{equation*}
    w^{(l)} = \mathcal{K}_s^{\eta_{\minn}p} \circ \tilde{R}_{\delta,s}^{(l)}(u_{s,\eta_{\minn}}^\star(\varphi))w^{(l)} + H_{\eta_{\minn}}^{(l)}(\varphi) =: F_{\eta_{\minn}}^{(l)}(w^{(l)},\varphi),
\end{equation*}
with $H_{\eta_{\minn}}^{(1)}(\varphi) = 0$ and for $\nu \in [\eta_{\minn},\eta_{\maxx}]$ and $l \geq 2$ the map $H_{\nu}^{(l)}(\varphi)$ is a finite sum of terms of the form
\begin{equation*}
    \mathcal{K}_s^{l \nu} \circ \tilde{R}_{\delta,s}^{(0,q)}(u_{\nu,s}^\star(\varphi))(u_{\nu,s}^{\star,(r_1)}(\varphi),\dots, u_{\nu,s}^{\star,(r_q)}(\varphi)),
\end{equation*}
with $2 \leq q \leq l$ and $1 \leq r_i < l$ for $i=1,\dots,q$ such that $r_1 + \dots + r_q = l$. Under these assumptions we have that the mapping $F_{\eta}^{(l)} : \mathcal{L}^l(X_0(s)^l, \BC_s^{l\eta}(\mathbb{R},X)) \times X_0(s) \to \mathcal{L}^l(X_0(s)^l,\BC_s^\eta(\mathbb{R},X))$ is a uniform contraction for all $\eta \in [\eta_{\minn},\frac{1}{l}\eta_{\maxx}]$ due to \Cref{lemma: smoothness1}. 

Fix some $\eta \in ((l+1)\eta_{\minn},\eta_{\maxx}]$ and choose $\eta_{\minn} < \sigma < (l+1)\sigma < \mu < \eta$. We show that \Cref{lemma:fixedpointsmooth} applies with the Banach spaces
\begin{align*}
    Y_0 &= \mathcal{L}^l(X_0(s)^l, \BC_s^{l \sigma}(\mathbb{R},X)), \quad Y = \mathcal{L}^l(X_0(s)^l, \BC_s^{\mu}(\mathbb{R},X)), \\
    Y_1 &= \mathcal{L}^l(X_0(s)^l, \BC_s^{\eta}(\mathbb{R},X)), \quad \Lambda = X_0(s)
\end{align*}
and operators
\begin{align*}
    f(u,\varphi) &= \mathcal{K}_s^\mu \circ \tilde{R}_{\delta,s}^{(0,1)}(u_{\eta_{\minn},s}^{\star}(\varphi))u + H_{\mu/l}^{(l)}(\varphi), \quad \forall (u,\varphi) \in \mathcal{L}^l(X_0(s)^l, \BC_s^{\mu}(\mathbb{R},X)) \times X_0(s), \\
    f^{(1)}(u,\varphi) &=  \mathcal{K}_s^\mu \circ \tilde{R}_{\delta,s}^{(0,1)}(u_{\eta_{\minn},s}^{\star}(\varphi)) \in \mathcal{L}(\mathcal{L}^l(X_0(s)^l, \BC_s^{\mu}(\mathbb{R},X))), \\
    f_1^{(1)}(u,\varphi) &=  \mathcal{K}_s^\eta \circ \tilde{R}_{\delta,s}^{(0,1)}(u_{\eta_{\minn},s}^{\star}(\varphi)) \in \mathcal{L}(\mathcal{L}^l(X_0(s)^l, \BC_s^{\eta}(\mathbb{R},X))),
\end{align*}
We start with verifying condition 1. We have to check that the map
\begin{equation*}
    (u,\varphi) \hspace{-2pt} \mapsto  \hspace{-2pt} \mathcal{J}_s^{\eta,\mu}[\mathcal{K}_s^\mu \circ \tilde{R}_{\delta,s}^{(0,1)}(u_{\eta_{\minn},s}^{\star}(\varphi))u + H_{\mu/l}^{(l)}(\varphi)]
\end{equation*}
from $\mathcal{L}^l(X_0(s)^l, \BC_s^{p \sigma}(\mathbb{R},X)) \times X_0(s)$ to $\mathcal{L}^l(X_0(s)^l, \BC_s^{\eta}(\mathbb{R},X))$ is $C^1$-smooth, where now $\mathcal{J}_s^{\eta,\mu} : \mathcal{L}^l(X_0(s)^l, \BC_s^{\mu}(\mathbb{R},X)) \hookrightarrow \mathcal{L}^l(X_0(s)^l, \BC_s^{\eta}(\mathbb{R},X))$ is a continuous embedding. The mapping defined above $C^1$-smooth in the first variable since it is linear. For the second variable, notice that the map $\varphi \mapsto \mathcal{J}_s^{\eta,\mu} \circ \mathcal{K}_s^\mu \circ \tilde{R}_{\delta,s}^{(1)}(u_{\eta_{\minn},s}^{\star}(\varphi))$ is $C^1$ due to \Cref{lemma: smoothness4} with $\mu > (l+1)\sigma$ and the $C^1$ smoothness of $\varphi \mapsto \mathcal{J}_s^{\sigma \eta_{\minn}} \circ u_{\eta_{\minn},s}^\star(\varphi)$ with $\sigma > \eta_{\minn}$. For the $C^1$ smoothness of $\varphi \mapsto \mathcal{J}_s^{\eta,\mu} \circ H_{\mu/l}^{(l)}(\varphi)$, we get differentiability from \Cref{lemma: smoothness4} and hence we have that the derivative of $\varphi \mapsto H_{\mu/l}^{(l)}(\varphi)$ is a sum of terms of the form
\begin{equation*}
\begin{split}
    &\mathcal{K}_s^\mu \circ \tilde{R}_{\delta,s}^{(0,q+1)}(u_{\eta_{\minn},s}^\star(\varphi))(u_{\eta_{\minn},s}^{\star,(r_1)}(\varphi),\dots, u_{\eta_{\minn},s}^{\star,(r_q)}(\varphi)) \\
        &+ \sum_{j=1}^q \mathcal{K}_s^\mu \circ \tilde{R}_{\delta,s}^{(q)}(u_{\eta_{\minn},s}^\star(\varphi))(u_{\eta_{\minn},s}^{\star,(r_1)}(\varphi),\dots,u_{\eta_{\minn},s}^{\star,(r_j + 1)}(\varphi),\dots, u_{\eta_{\minn},s}^{\star,(r_q)}(\varphi))
\end{split}
\end{equation*}
and each $u_{\eta_{\minn},s}^{\star,(r_j)}$ is a map from $X_0(s)$ into $\BC_s^{j\sigma}(\mathbb{R},X)$. Applying \Cref{lemma: smoothness1} with $\mu > (l+1)\sigma$ ensures continuity of $DH_{\mu/l}^{(l)}(\varphi)$ and also then continuity of $\mathcal{J}_s^{\eta,\mu}DH_{\mu/l}^{(l)}(\varphi)$. The remaining calculations from condition 1 are easily checked. Condition 4 can be proven similarly.

The Lipschitz condition and boundedness for condition 2 follows by the choice of $\delta > 0$ defined at the beginning and the uniform contractivity of $H_{\mu/l}^{(l)}$ described above. Let us now prove condition 3. Let us write
\begin{equation*}
     \mathcal{K}_s^\eta \circ \tilde{R}_{\delta,s}^{(0,1)}(u_{\eta_{\minn},s}^{\star}(\varphi)) = \mathcal{J}_{s}^{\eta,\mu} \circ \mathcal{K}_s^\mu \circ R_{\delta,s}^{(0,1)}(u_{\eta_{\minn},s}^{\star})(\varphi)
\end{equation*}
and by applying \Cref{lemma: smoothness1} together with the $C^1$-smoothness of $u_{\eta_{\minn},s}^{\star}$ to obtain continuity of $\varphi \mapsto \tilde{R}_{\delta,s}^{(0,1)}(u_{\eta_{\minn},s}^{\star}(\varphi))$. This also proves condition 5. All the conditions from \Cref{lemma:fixedpointsmooth} are satisfied, and so we conclude that $u_{\eta_{\minn},s}^{(l)} : X_0(s) \to \mathcal{L}^l(X_0(s)^l,\BC_s^\eta(\mathbb{R},X))$ is of the class $C^1$ with derivative $u_{\eta_{\minn},s}^{(l + 1)} = Du_{\eta_{\minn},s}^{(l)} \in \mathcal{L}^{l+1}(X_0(s)^{l+1}, \BC_s^\eta(\mathbb{R},X))$ given by the unique solution $w^{(l+1)}$ of the equation
\begin{equation*}
    w^{(l+1)} = \mathcal{K}_s^{\mu} \circ \tilde{R}_{\delta,s}^{(1)}(u_{\eta_{\minn},s}^\star(\varphi))w^{(l+1)} + H_{\mu/(l+1)}^{(l+1)}(\varphi),
\end{equation*}
where $H_{\mu/(l+1)}^{(l+1)}(\varphi) = \mathcal{K}_s^\mu \circ \tilde{R}_{\delta,s}^{(0,2)}(u_{\eta_{\minn},s}^\star(\varphi))(u_{\eta_{\minn},s}^{\star,(l)}(\varphi),u_{\eta_{\minn},s}^{\star,(1)}(\varphi)) + DH_{\mu/l}^{(l)}(\varphi)$. Similar arguments of the proof of the $l=k=1$ case show that the unique fixed point $u_{\eta_{\minn},s}^{\star,(l + 1)} \in \mathcal{L}^{l+1}(X_0(s)^{l+1},\BC_s^{\eta_{\minn}(l+1)}(\mathbb{R},X))$. Hence, the map $\mathcal{J}_s^{\eta,\eta_{\minn}} \circ {u}_{\eta_{\minn},s}^\star : X_0(s) \to \BC_s^\eta(\mathbb{R},X)$ is of the class $C^{l + 1}$ if $\eta \in ((l+1)\eta_{\minn},\eta_{\maxx}]$ which completes the proof.
\end{proof}

We also show that each partial derivative of the center manifold in the second component is uniformly Lipschitz continuous. The proof is inspired by \cite[Corollary 8.2.1.2]{Church2018}.
\begin{corollary} \label{cor:LipschitzC}
For each $l \in \{0,\dots,k\}$, there exists a constant $L(l) > 0$ such that $\|D_2^l \mathcal{C}(t,\varphi) - D_2^l \mathcal{C}(t,\psi) \| \leq L(l) \|\varphi - \psi\|$ for all $t \in \mathbb{R}$ and $\varphi,\psi \in X_0(t)$.
\end{corollary}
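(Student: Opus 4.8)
The plan is to replace the statement by the stronger assertion, proved by induction on $l\in\{0,\dots,k\}$, that for sufficiently small $\delta>0$ the map $\varphi\mapsto u_{\eta_{\minn},s}^{\star,(l)}(\varphi)$ from the proof of \Cref{thm:smoothnesscmt} is globally Lipschitz from $X_0(s)$ into $\mathcal{L}^l(X_0(s)^l,\BC_s^{l\eta_{\minn}}(\mathbb{R},X))$ with a Lipschitz constant $L'(l)$ that is independent of $s$, and (for $l\geq1$) that $\sup_{s\in\mathbb{R}}\sup_{\varphi}\|u_{\eta_{\minn},s}^{\star,(l)}(\varphi)\|<\infty$. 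Once this is established, the corollary drops out by evaluating at $t=s$: since $\mathcal{C}(t,\varphi)=u_{\eta_{\minn},t}^{\star}(\varphi)(t)$ and the point evaluation $f\mapsto f(t)$ is linear with norm $\leq1$ on every $\BC_t^{\nu}(\mathbb{R},X)$, differentiation gives $D_2^l\mathcal{C}(t,\varphi)(v_1,\dots,v_l)=u_{\eta_{\minn},t}^{\star,(l)}(\varphi)(v_1,\dots,v_l)(t)$, whence $\|D_2^l\mathcal{C}(t,\varphi)-D_2^l\mathcal{C}(t,\psi)\|\leq\|u_{\eta_{\minn},t}^{\star,(l)}(\varphi)-u_{\eta_{\minn},t}^{\star,(l)}(\psi)\|_{l\eta_{\minn},t}\leq L'(l)\|\varphi-\psi\|$, and $L(l):=L'(l)$ does the job.

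The base case $l=0$ is exactly \Cref{cor:lipschitzCMT}, and for $l=1$ the inhomogeneity in \eqref{eq:w^(1)} is the $\varphi$-independent map $U(\cdot,s)$, so only the first of the two terms below occurs. For the inductive step I would use that $w_\varphi:=u_{\eta_{\minn},s}^{\star,(l)}(\varphi)$ is the unique solution of
\begin{equation*}
w=T_\varphi w+H_{\eta_{\minn}}^{(l)}(\varphi),\qquad T_\varphi:=\mathcal{K}_s^{l\eta_{\minn}}\circ\tilde{R}_{\delta,s}^{(0,1)}\big(u_{\eta_{\minn},s}^{\star}(\varphi)\big),
\end{equation*}
where $\|T_\varphi\|\leq L_{R_\delta}\|\mathcal{K}_s^{l\eta_{\minn}}\|\leq\tfrac14$ (the choice of $\delta$ forces this for every exponent in $[\eta_{\minn},\eta_{\maxx}]$, and $l\eta_{\minn}\leq k\eta_{\minn}<\eta_{\maxx}$), and where, for $l\geq2$, $H_{\eta_{\minn}}^{(l)}(\varphi)$ is a finite sum of terms $\mathcal{K}_s^{l\eta_{\minn}}\circ\tilde{R}_{\delta,s}^{(0,q)}(u_{\eta_{\minn},s}^{\star}(\varphi))\big(u_{\eta_{\minn},s}^{\star,(r_1)}(\varphi),\dots,u_{\eta_{\minn},s}^{\star,(r_q)}(\varphi)\big)$ with $2\leq q\leq l$, $1\leq r_i<l$, $r_1+\dots+r_q=l$. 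Subtracting the identities for $\varphi$ and $\psi$ and inserting $T_\varphi w_\psi$ gives
\begin{equation*}
w_\varphi-w_\psi=T_\varphi(w_\varphi-w_\psi)+(T_\varphi-T_\psi)w_\psi+\big(H_{\eta_{\minn}}^{(l)}(\varphi)-H_{\eta_{\minn}}^{(l)}(\psi)\big),
\end{equation*}
so $\|w_\varphi-w_\psi\|\leq\tfrac43\big(\|(T_\varphi-T_\psi)w_\psi\|+\|H_{\eta_{\minn}}^{(l)}(\varphi)-H_{\eta_{\minn}}^{(l)}(\psi)\|\big)$.

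It then remains to bound the two right-hand terms by $C\|\varphi-\psi\|$ with $C$ independent of $s$. A uniform bound $\|w_\psi\|\leq\tfrac43\|H_{\eta_{\minn}}^{(l)}(\psi)\|\leq M_l<\infty$ follows from $\|T_\psi\|\leq\tfrac14$, the $s$-uniform bound on $\|\mathcal{K}_s^{\eta}\|$ (\Cref{prop:ketas}), the uniform boundedness of $\|\tilde{R}_{\delta,s}^{(0,q)}(\cdot)\|$ on $V_{\delta,s}^\infty(\mathbb{R},X)$, and the induction hypothesis $\|u_{\eta_{\minn},s}^{\star,(r_i)}(\cdot)\|\leq\mathrm{const}$ (with $\|u_{\eta_{\minn},s}^{\star,(1)}\|\leq2K_\varepsilon$ from \Cref{thm:sol lipschitz}); this simultaneously closes the uniform-bound half of the induction. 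Next, $\|(T_\varphi-T_\psi)w_\psi\|\leq\|\mathcal{K}_s^{l\eta_{\minn}}\|\,\|\tilde{R}_{\delta,s}^{(0,1)}(u_{\eta_{\minn},s}^{\star}(\varphi))-\tilde{R}_{\delta,s}^{(0,1)}(u_{\eta_{\minn},s}^{\star}(\psi))\|\,M_l$, and the middle factor is $\leq C\|\varphi-\psi\|$ by the mean value inequality \cite[Corollary 3.2]{Coleman2012} (using the uniform bound on $\|\tilde{R}_{\delta,s}^{(0,2)}\|$) together with the $2K_\varepsilon$-Lipschitz property of $u_{\eta_{\minn},s}^{\star}$. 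Finally, $H_{\eta_{\minn}}^{(l)}(\varphi)-H_{\eta_{\minn}}^{(l)}(\psi)$ is a finite sum of differences of products of $\varphi$-maps each of which is uniformly bounded and uniformly Lipschitz — the factors $u_{\eta_{\minn},s}^{\star,(r_i)}(\cdot)$ by the induction hypothesis (as $r_i<l$) and $\tilde{R}_{\delta,s}^{(0,q)}(u_{\eta_{\minn},s}^{\star}(\cdot))$ by the same mean value argument — so each such difference is Lipschitz with $s$-independent constant via the telescoping estimate $\|\prod_i a_i-\prod_i b_i\|\leq\sum_i\big(\prod_{j<i}\|a_j\|\big)\|a_i-b_i\|\big(\prod_{j>i}\|b_j\|\big)$. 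Collecting these bounds yields $\|w_\varphi-w_\psi\|\leq L'(l)\|\varphi-\psi\|$ with $L'(l)$ independent of $s$, completing the induction.

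The one genuinely delicate point — the main obstacle — is the top order $l=k$: in $H_{\eta_{\minn}}^{(k)}$ the index $q$ may equal $k$, and Lipschitz continuity of $\varphi\mapsto\tilde{R}_{\delta,s}^{(0,k)}(u_{\eta_{\minn},s}^{\star}(\varphi))$ needs $u\mapsto\tilde{R}_{\delta,s}^{(0,k)}(u)$ to be Lipschitz, i.e.\ $D_2^kR_{\delta,s}(t,\cdot)$ to be Lipschitz uniformly in $t$, which is a shade more than plain $C^k$. This holds automatically in all cases of interest: in \Cref{cor:CMT DDE} the nonlinearity $R$ stems from a $C^{k+1}$ right-hand side, so $D_2^kR(t,\cdot)$ is $C^1$ and the cut-off renders $\tilde{R}_{\delta,s}^{(0,k)}$ globally Lipschitz; in the purely abstract situation one either imposes this mild extra regularity on $R$ or restricts the statement to $l\leq k-1$. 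Every other step is routine bookkeeping, combining the contraction constant $\tfrac14$ with the $s$-uniform bounds already supplied by \Cref{prop:ketas}, \Cref{cor:lipschitz2} and \Cref{thm:sol lipschitz}.
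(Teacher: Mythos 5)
Your proposal is correct and follows essentially the same route as the paper: the paper's own proof is a two-line appeal to the fact that $u_{\eta_{\minn},s}^{\star,(l)}$ solves a fixed point equation whose right-hand side is a contraction with an $s$-independent constant, combined with ``the same strategy as'' \Cref{cor:lipschitzCMT}, and your induction on $l$ simply fills in the details (the $T_\varphi-T_\psi$ term, the telescoping estimate for $H_{\eta_{\minn}}^{(l)}$, and the uniform bounds) that the paper leaves implicit. The subtlety you flag at $l=k$ --- that one needs $D_2^kR_{\delta,s}(t,\cdot)$ uniformly Lipschitz, which is slightly more than $C^k$-smoothness of $R$ --- is genuine and is not addressed in the paper's terse proof, but as you observe it is harmless in the intended application \Cref{cor:CMT DDE}, where $F$ is $C^{k+1}$.
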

\begin{proof}
For $l = 0$, the result is already proven in \Cref{cor:lipschitzCMT}. Now let $l \in \{1,\dots,k\}$. Then, from the proof of \Cref{thm:smoothnesscmt} we see that $u_{\eta_{\minn},s}^{\star,(l)}$ is the unique solution of a fixed point problem, where the right hand-side is a contraction with a Lipschitz constant $L(l)$ independent of $s$. Using the same strategy as the proof of \Cref{cor:lipschitzCMT}, we obtain the desired result. 
\end{proof}

\begin{corollary} \label{cor:tangent}
The center manifold $\mathcal{W}^c$ is $C^k$-smooth and its tangent bundle is $X_0$ i.e. $D_2 \mathcal{C}(t,0)\varphi = \varphi$ for all $(t,\varphi) \in X_0$.
\end{corollary}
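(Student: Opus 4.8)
The plan is to deduce both claims directly from \Cref{thm:smoothnesscmt} by composing the solution map with the point--evaluation functional, and then to read off the tangent space by differentiating the fixed-point equation \eqref{eq:w^(1)} at the origin. No new hard analysis is needed here; the heavy machinery already sits in \Cref{thm:smoothnesscmt}.

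First I would record the elementary but decisive observation that, for every $\eta>0$ and $s\in\mathbb{R}$, the evaluation map $\evo_s\colon \BC_s^{\eta}(\mathbb{R},X)\to X$, $\evo_s f:=f(s)$, is linear and bounded with $\|\evo_s\|\le 1$, since $\|f(s)\|=e^{-\eta|s-s|}\|f(s)\|\le\|f\|_{\eta,s}$. By \eqref{eq:map C} and uniqueness of fixed points (identifying the solution map of \Cref{thm:sol lipschitz} with $u_{\eta_{\minn},s}^\star$) we have $\mathcal{C}(s,\cdot)=\evo_s\circ\bigl(\mathcal{J}_s^{\eta_{\maxx},\eta_{\minn}}\circ u_{\eta_{\minn},s}^\star\bigr)$. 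Because the interval $[\eta_{\minn},\eta_{\maxx}]$ was chosen with $k\eta_{\minn}<\eta_{\maxx}$, \Cref{thm:smoothnesscmt} applied with $l=k$ and $\eta=\eta_{\maxx}$ gives that $\mathcal{J}_s^{\eta_{\maxx},\eta_{\minn}}\circ u_{\eta_{\minn},s}^\star\colon X_0(s)\to\BC_s^{\eta_{\maxx}}(\mathbb{R},X)$ is $C^k$-smooth, with $D^{j}(\mathcal{J}_s^{\eta_{\maxx},\eta_{\minn}}\circ u_{\eta_{\minn},s}^\star)=\mathcal{J}_s^{\eta_{\maxx},\eta_{\minn}}\circ u_{\eta_{\minn},s}^{\star,(j)}$ for $1\le j\le k$. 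Composing with the bounded linear operator $\evo_s$ preserves $C^k$-smoothness, so $\mathcal{C}(s,\cdot)\colon X_0(s)\to X$ is $C^k$ and
\begin{equation*}
    D_2^{j}\mathcal{C}(s,\varphi)(\psi_1,\dots,\psi_j)=\bigl(u_{\eta_{\minn},s}^{\star,(j)}(\varphi)(\psi_1,\dots,\psi_j)\bigr)(s),\qquad 1\le j\le k .
\end{equation*}
Together with the fiber-uniform Lipschitz bounds of \Cref{cor:lipschitzCMT} (and its higher-order analogue in \Cref{cor:LipschitzC}), this establishes the $C^k$-smoothness of $\mathcal{C}$, hence of $\mathcal{W}^c=\mathcal{C}(X_0)$.

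For the tangent bundle I would compute $D_2\mathcal{C}(s,0)$. From \eqref{eq:w^(1)}, $u_{\eta_{\minn},s}^{\star,(1)}(0)$ is the unique fixed point of $w^{(1)}=\mathcal{K}_s^{\eta_{\minn}}\circ\tilde{R}_{\delta,s}^{(0,1)}\bigl(u_{\eta_{\minn},s}^\star(0)\bigr)w^{(1)}+U(\cdot,s)\big|_{X_0(s)}$. Now $u_{\eta_{\minn},s}^\star(0)=0$ by \Cref{thm:sol lipschitz}, and on the ball $B_\delta(X)$ the cut-off factors defining $R_{\delta,s}$ are identically $1$ (since $\|P_0(s)u\|\le N\|u\|<N\delta$ and $\|(P_-(s)+P_+(s))u\|\le N\|u\|<N\delta$ there), so $R_{\delta,s}(t,\cdot)=R(t,\cdot)$ near $0$ and $D_2R_{\delta,s}(t,0)=D_2R(t,0)=0$ by \eqref{eq:nonlinearterms}. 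Hence $\tilde{R}_{\delta,s}^{(0,1)}(0)=0$, the fixed-point equation degenerates to $w^{(1)}=U(\cdot,s)\big|_{X_0(s)}$ (which indeed lies in $\BC_s^{\eta_{\minn}}(\mathbb{R},X)$ because $U(\cdot,s)\varphi=U_0(\cdot,s)\varphi$ obeys the center estimate of the exponential trichotomy in \Cref{hyp:CMT}, cf.\ \Cref{prop:X0s}), and therefore $D_2\mathcal{C}(s,0)\varphi=(U(\cdot,s)\varphi)(s)=U(s,s)\varphi=\varphi$ for all $\varphi\in X_0(s)$. Since $D_2\mathcal{C}(s,0)$ is the inclusion $X_0(s)\hookrightarrow X$, it is an injective operator admitting $P_0(s)$ as a bounded left inverse, so $\mathcal{C}(s,\cdot)$ is a local $C^k$-embedding near the origin and the tangent space of the fiber $\mathcal{W}^c(s)$ at $0$ is $X_0(s)$; that is, the tangent bundle of $\mathcal{W}^c$ is $X_0$.

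I do not expect a genuine obstacle: the substantive difficulty is entirely absorbed by \Cref{thm:smoothnesscmt}, which is invoked as a black box. The only two points to be careful about are the exponent bookkeeping — one must accept the loss of decay rate from $\eta_{\minn}$ up to $\eta_{\maxx}$, which is precisely why $[\eta_{\minn},\eta_{\maxx}]$ with $k\eta_{\minn}<\eta_{\maxx}$ was fixed in advance — and the fact that point evaluation is a contraction in every weighted norm $\|\cdot\|_{\eta,s}$, which makes the passage from the solution map $u_{\eta_{\minn},s}^\star$ to $\mathcal{C}$ completely harmless. The tangency identity is then a one-line consequence of $u_{\eta_{\minn},s}^\star(0)=0$ and the second-order vanishing \eqref{eq:nonlinearterms}.
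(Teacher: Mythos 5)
Your proposal is correct and follows essentially the same route as the paper: factor $\mathcal{C}(t,\cdot)=\evo_t\circ(\mathcal{J}_t^{\eta,\eta_{\minn}}\circ u_{\eta_{\minn},t}^{\star})$ through the bounded linear evaluation map, invoke \Cref{thm:smoothnesscmt} for $C^k$-smoothness, and then read $D_2\mathcal{C}(t,0)\varphi=\varphi$ off the fixed-point equation \eqref{eq:w^(1)} using $u_{\eta_{\minn},t}^\star(0)=0$ and $D\tilde{R}_{\delta,t}(0)=0$. The extra observations you add (the cut-off being the identity near the origin, and $P_0(t)$ serving as a bounded left inverse of the inclusion) are consistent with and slightly more explicit than the paper's own argument.
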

\begin{proof}
Let $ \eta \in [\eta_{\minn},\eta_{\maxx}] \subset (0,\min\{-a,b\})$ such that $k \eta_{\minn} < \eta_{\maxx}$. Define for any $t \in \mathbb{R}$ the evolution map $\evo_t : \BC_t^\eta(\mathbb{R},X) \to X$ as $\evo_t(f) := f(t)$. Then, for all $(t,\varphi) \in X_0$ we get
\begin{equation*}
    \mathcal{C}(t,\varphi) = \evo_t(u_{\eta_{\minn},t}^{\star}(\varphi)) = \evo_t(\mathcal{J}_t^{\eta, \eta_{\minn}}u_{\eta_{\minn},t}^{\star}(\varphi)).
\end{equation*}
It is clear that $\evo_t \in \mathcal{L}(\BC_t^\eta(\mathbb{R},X), X)$ and hence it follows from \Cref{thm:smoothnesscmt} that $\mathcal{C}$ is of the class $C^k$. This shows that the center manifold $\mathcal{W}^c$ is $C^k$-smooth. Moreover,
\begin{equation*}
    D_2\mathcal{C}(t,0)\varphi = \evo_t(D(\mathcal{J}_t^{\eta, \eta_{\minn}} \circ u_{\eta_{\minn},t}^{\star})(0)\varphi) = \evo_t(u_{\eta_{\minn},t}^{\star,(1)}(0)\varphi).
\end{equation*}
As $D\tilde{R}_{\delta,t}(0) = 0$ and $u_{\eta_{\minn},t}^\star(0) = 0$ for all $t \in \mathbb{R}$, we get from \eqref{eq:w^(1)} that $u_{\eta_{\minn},t}^{\star,(1)}(0) = U(\cdot,t)$ and so $D_2\mathcal{C}(t,0)\varphi = \evo_t(U(\cdot,t)\varphi) = \varphi$, as claimed.
\end{proof}

It follows from the previous corollary that the local center manifold $\mathcal{W}_{\loc}^c$ is also $C^k$-smooth and has $X_0$ as a tangent bundle. Let us now take a look into periodicity. 

\begin{theorem} \label{thm:periodic}
If the time-dependent nonlinear perturbation $R : \mathbb{R} \times X \to X^{\odot \star}$ is $T$-periodic in the first variable, then there exists a $\delta > 0$ such that $\mathcal{C}(t+T,\varphi) = \mathcal{C}(t,\varphi)$ for all $t \in \mathbb{R}$ whenever $\|\varphi\| < \delta$.
\end{theorem}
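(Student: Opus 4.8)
The plan is to show that the fixed-point map $u_s^\star$ from \Cref{thm:sol lipschitz} transforms covariantly under the time-shift by $T$, i.e.\ $u_{s+T}^\star(\varphi)=\sigma_T u_s^\star(\varphi)$ where $(\sigma_T g)(t):=g(t-T)$, and then to read off $\mathcal{C}(s+T,\varphi)=\mathcal{C}(s,\varphi)$ by evaluating \eqref{eq:map C} at the base point. The standing periodicity I would use is that, since $B$ is $T$-periodic, a uniqueness argument for \eqref{eq:T-LAIEphi} after the substitution $\tau\mapsto\tau+T$ gives $U(t+T,s+T)=U(t,s)$ and hence also $U^{\odot\star}(t+T,s+T)=U^{\odot\star}(t,s)$, while the spectral projections $P_i$ and $P_i^{\odot\star}$ are $T$-periodic, so that $X_0(s+T)=X_0(s)$ and the asserted identity makes sense with $\varphi$ in a fixed fibre.

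First I would record the operator identity $\mathcal{K}_{s+T}^\eta\circ\sigma_T=\sigma_T\circ\mathcal{K}_s^\eta$. This is a change of variables $\tau=\sigma+T$ in each of the three integrals defining $\mathcal{K}$, using $U^{\odot\star}(t,\sigma+T)=U^{\odot\star}(t-T,\sigma)$ and $P_i^{\odot\star}(\sigma+T)=P_i^{\odot\star}(\sigma)$ and observing that the improper lower limits $\pm\infty$ are unchanged by the shift. Next I would check $\tilde{R}_{\delta,s+T}\circ\sigma_T=\sigma_T\circ\tilde{R}_{\delta,s}$: for $u\in\BC_s^\eta(\mathbb{R},X)$ and $\tau\in\mathbb{R}$, the $T$-periodicity of $R$ in its first argument gives $R(\tau,u(\tau-T))=R(\tau-T,u(\tau-T))$, and the $T$-periodicity of $P_0,P_\pm$ makes the two cut-off factors in $R_{\delta,s+T}(\tau,\cdot)$ coincide with those of $R_{\delta,s}(\tau-T,\cdot)$, so that $R_{\delta,s+T}(\tau,u(\tau-T))=R_{\delta,s}(\tau-T,u(\tau-T))$. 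Note also that $\sigma_T$ is an isometry of $\BC_s^\eta(\mathbb{R},X)$ onto $\BC_{s+T}^\eta(\mathbb{R},X)$.

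Combining these with $U(\cdot,s+T)\varphi=\sigma_T(U(\cdot,s)\varphi)$ yields, for $\varphi\in X_0(s)=X_0(s+T)$ and $u\in\BC_s^\eta(\mathbb{R},X)$, the conjugation $\mathcal{G}_{s+T}(\sigma_T u,\varphi)=\sigma_T\,\mathcal{G}_s(u,\varphi)$. Applying this to the fixed point $u=u_s^\star(\varphi)$ of $\mathcal{G}_s(\cdot,\varphi)$ shows that $\sigma_T u_s^\star(\varphi)\in\BC_{s+T}^\eta(\mathbb{R},X)$ is a fixed point of $\mathcal{G}_{s+T}(\cdot,\varphi)$; choosing $\delta>0$ small enough that $\mathcal{G}_{s'}(\cdot,\varphi)$ is a contraction uniformly in $s'$ (as in \Cref{thm:sol lipschitz}), uniqueness of the fixed point forces $u_{s+T}^\star(\varphi)=\sigma_T u_s^\star(\varphi)$. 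Evaluating at $t=s+T$ then gives $\mathcal{C}(s+T,\varphi)=u_{s+T}^\star(\varphi)(s+T)=u_s^\star(\varphi)(s)=\mathcal{C}(s,\varphi)$ for every $\varphi\in X_0(s)$, and in particular whenever $\|\varphi\|<\delta$.

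I expect the main obstacle to be a matter of care rather than of ideas: one must justify that the change of variables preserves the conditionally convergent weak$^\star$ improper integrals over $(-\infty,t]$ and $[t,\infty)$ in the definition of $\mathcal{K}_s^\eta$ — which is legitimate since their convergence was established in \Cref{prop:ketas} and $\sigma_T$ is a measure-preserving bijection of $\mathbb{R}$ — and one must make sure the $T$-periodicity of the spectral projections is genuinely available (it is built into the periodic setup underlying \Cref{thm:LCMT}, and in the DDE application it is \Cref{prop:Plambda continuous}). Once those points are settled, the remaining content is merely the uniqueness of the contraction fixed point.
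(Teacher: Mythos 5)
Your argument is correct and is essentially the argument the paper relies on: the paper's proof simply defers to \cite[Lemma 8.3.1]{Church2018}, which is precisely this shift--conjugation of the fixed-point operator ($\mathcal{G}_{s+T}(\sigma_T u,\varphi)=\sigma_T\mathcal{G}_s(u,\varphi)$, using $T$-periodicity of $U$, of the projectors, and of $R$) followed by uniqueness of the contraction fixed point. Your version in fact yields the identity for all $\varphi\in X_0(t)$, which implies the stated local conclusion, and you correctly flag the only point needing care, namely that $T$-periodicity of $U$ and of the spectral projectors must be available (it is, from the $T$-periodicity of $B$ and, for DDEs, \Cref{prop:Plambda continuous}).
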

\begin{proof}
The proof of this theorem is essentially the same as \cite[Lemma 8.3.1]{Church2018}, which was obtained for impulsive DDEs. To obtain the result for classical DDEs, one has to ignore the discontinuous impulses and make the logical substitution $\mathcal{R}\mathcal{C}\mathcal{R} \to X$ and put everything towards the sun-star setting.
\end{proof}

\section{Variation-of-constants formulas and one-to-one correspondences} \label{appendix: variation constants}
This section of the appendix consists of two subsections. In the first subsection, we study the interplay between solutions of inhomogeneous linear abstract ODEs and their associated inhomogeneous linear AIEs. In the second subsection, we prove that there is a one-to-one correspondence between solutions of \eqref{eq:T-DDEphi} and \eqref{eq:T-AIEphi} by using the results from \Cref{subsec:interplay}. This result is important when one applies the sun-star machinery towards DDEs, see for example the local center manifold theorem for DDEs in \Cref{cor:CMT DDE}.

\subsection{Inhomogeneous perturbations to linear abstract ODEs and AIEs} \label{subsec:interplay}
In this subsection, we work with the same tools and notation as presented in \Cref{subsec: linear perturbation}. Let $J \subseteq \mathbb{R}$ be an interval and $s \in J$ an initial starting time. Applying an inhomogeneous perturbation $f : J \to X^{\odot \star}$ on the generator $A^{\odot \star}(t)$ to \eqref{eq:T-LAODEphi} yields
\begin{equation} \label{eq:T-LAODEf}
    \begin{dcases}
            d^\star (j \circ u)(t) = A^{\odot \star}(t)ju(t) + f(t), \quad &t \geq s, \\
            u(s) = \varphi, \quad & \varphi \in X,
    \end{dcases}
\end{equation}
which suggest the variation-of-constants formula
\begin{equation} \label{eq:T-LAIEf}
    u(t) = U(t,s)\varphi + j^{-1} \int_s^t U^{\odot \star}(t,\tau) f(\tau) d\tau, \quad \varphi \in X.
\end{equation}
It is also possible to perturb the generator $A_0^{\odot \star}$ by $\varphi \mapsto B(t)\varphi + f$, for some fixed $t \in J$. This yields 
\begin{equation} \label{eq:T-LAODEf2}
    \begin{dcases}
            d^\star (j \circ u)(t) = A_0^{\odot \star}ju(t) + B(t)u(t) + f(t), \quad &t \geq s, \\
            u(s) = \varphi, \quad & \varphi \in X,
    \end{dcases}
\end{equation}
which suggests the variation-of-constants formula 
\begin{equation} \label{eq:T-LAIEf2}
    u(t) = T_0(t-s)\varphi + j^{-1} \int_s^t T_0^{\odot \star}(t-\tau) [B(\tau)u(\tau) + f(\tau)] d\tau, \quad \varphi \in X.
\end{equation}
Solutions to the linear problems above are similarly defined as in \Cref{subsec: linear perturbation}. It is clear by \eqref{eq:defAsunstar s} that a solution to \eqref{eq:T-LAODEf} is also a solution to \eqref{eq:T-LAODEf2} and vice versa. In this sense we call \eqref{eq:T-LAODEf} and \eqref{eq:T-LAODEf2} \emph{equivalent}. We would like to establish a similar equivalence between \eqref{eq:T-LAIEf} and \eqref{eq:T-LAIEf2}. When the perturbation $B$ does not depend on time, one can work with integrated semigroups in the $\odot$-reflexive case to prove the equivalence between the inhomogeneous autonomous problems, see \cite[Proposition 2.5]{Clement1989} and \cite[Lemma III.2.23]{Diekmann1995}. However, if we would succeed to generalize this approach towards time-dependent systems, it would probably only work in a $\odot$-reflexive setting. To overcome this problem, we will generalize the non-$\odot$-reflexive approach by Janssens in \cite[Section 3]{Janssens2020} towards a time-dependent setting while still assuming the $\odot$-reflexivity. The non-$\odot$-reflexive case is still an open problem, see \Cref{sec:conclusions}.

Before generalizing Janssens approach to a time-dependent setting, notice that the weak$^\star$ Riemann integral in \eqref{eq:T-LAIEf} is well-defined when $f$ is assumed to be continuous, see \Cref{lemma:wk*integral Usunstar}. Furthermore, the weak$^\star$ Riemann integral in \eqref{eq:T-LAIEf2} is well-defined when $f$ is assumed to be continuous because then the map $[s,t] \ni \tau \mapsto B(\tau)u(\tau) + f(\tau) \in X^{\odot \star}$ is continuous, see \cite[Lemma 2.2]{Clement1988}. This already indicates that continuity of $f$ is a sufficient condition for the well-definedness of the variation-of-constants formulas.

Before showing any equivalence between the four proposed problems above, let us first prove that at least one of them induces a unique solution on a subinterval of $J$. The following result is inspired by \cite[Proposition 20]{Janssens2020}.

\begin{proposition} \label{prop:unique TLAIEf1}
Let $I$ be a compact subinterval of $J$. The following two statements hold.
\begin{enumerate}
    \item For every $\varphi \in X$ and $f \in C(I,X^{\odot \star})$ there exists a unique solution $u_{\varphi,f}$ of \eqref{eq:T-LAIEf2} on $I$ and the map
    \begin{equation*}
        X \times C(I,X^{\odot \star}) \ni (\varphi,f) \mapsto u_{\varphi,f} \in C(I,X)
    \end{equation*}
    is continuous.
    \item If  $\varphi \in j^{-1} \mathcal{D}(A_0^{\odot \star})$ and $f : I \to X^{\odot \star}$ is locally Lipschitz, then there exist sequences of Lipschitz  functions $u_m : I \to X$ and $f_m : I \to X^{\odot \star}$ such that
    \begin{equation} \label{eq:um fm}
        u_m(t) = T_0(t)\varphi + j^{-1} \int_s^t  T_0^{\odot \star}(t-\tau) [B(\tau)u_m(\tau) + f_m(\tau)] d\tau, \quad \forall t \in I,
    \end{equation}
    and $f_m \to f$ and $u_m \to u_{\varphi,f}$ as $m \to \infty$, uniformly on $I$.
\end{enumerate}
\end{proposition}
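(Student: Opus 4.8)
The plan is to prove both parts by fixed–point arguments built on the affine operator
\[
(\mathcal{T}_{\varphi,f}u)(t) := T_0(t-s)\varphi + j^{-1}\int_s^t T_0^{\odot \star}(t-\tau)\bigl[B(\tau)u(\tau)+f(\tau)\bigr]\,d\tau,\qquad t\in I,
\]
acting on $C(I,X)$. For Part 1, I would first record that $\mathcal{T}_{\varphi,f}$ is a well-defined self-map of $C(I,X)$: for continuous $f$ the integrand $\tau\mapsto B(\tau)u(\tau)+f(\tau)$ is norm-continuous into $X^{\odot\star}$, so by \cite[Lemma 2.2]{Clement1988} together with $\odot$-reflexivity the weak$^\star$ Riemann integral exists and takes values in $j(X)$, and continuity of $t\mapsto(\mathcal{T}_{\varphi,f}u)(t)$ follows from an argument as in \Cref{lemma:wk*integral Usunstar}. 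Using $\|T_0^{\odot\star}(t)\|=\|T_0(t)\|\le M$ on the compact interval $I$, the boundedness of $j^{-1}$, and $\beta:=\sup_{\tau\in I}\|B(\tau)\|<\infty$ (Lipschitz $B$), one gets $\|(\mathcal{T}_{\varphi,f}u)(t)-(\mathcal{T}_{\varphi,f}v)(t)\|\le M\|j^{-1}\|\beta\int_s^t\|u(\tau)-v(\tau)\|\,d\tau$, so that $\mathcal{T}_{\varphi,f}$ becomes a contraction once $C(I,X)$ is equipped with an equivalent exponentially weighted (Bielecki) norm $\|u\|_\rho=\sup_{t\in I}e^{-\rho(t-s)}\|u(t)\|$ for $\rho$ large. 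Banach's fixed point theorem then yields the unique solution $u_{\varphi,f}$ of \eqref{eq:T-LAIEf2} on $I$. For the continuity of $(\varphi,f)\mapsto u_{\varphi,f}$ I would subtract the integral equations for two data pairs and apply Grönwall (equivalently, invoke continuous dependence of the fixed point of a uniform contraction on parameters) to obtain $\|u_{\varphi_1,f_1}-u_{\varphi_2,f_2}\|_{C(I,X)}\le C\bigl(\|\varphi_1-\varphi_2\|+\|f_1-f_2\|_{C(I,X^{\odot\star})}\bigr)$ with $C=C(I,M,\beta,\|j^{-1}\|)$.

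For Part 2, assume $\varphi\in j^{-1}\mathcal{D}(A_0^{\odot\star})$ and $f$ locally Lipschitz; since $I$ is compact, a standard covering argument makes $f$ globally Lipschitz on $I$. Because $\varphi\in j^{-1}\mathcal{D}(A_0^{\odot\star})$, the map $t\mapsto T_0(t-s)\varphi$ solves the homogeneous abstract ODE, so $t\mapsto jT_0(t-s)\varphi$ has weak$^\star$ derivative $T_0^{\odot\star}(t-s)A_0^{\odot\star}j\varphi$, which is norm-bounded on the compact $I$; as $j$ is isometric, $t\mapsto T_0(t-s)\varphi$ is Lipschitz on $I$. I would approximate $f$ uniformly on $I$ by piecewise affine (in $t$) functions $f_m:I\to X^{\odot\star}$, each of which is Lipschitz, and set $u_m:=u_{\varphi,f_m}$, the solution of \eqref{eq:T-LAIEf2} with data $(\varphi,f_m)$ supplied by Part 1. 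Then \eqref{eq:um fm} holds by construction, $f_m\to f$ uniformly by the choice of the $f_m$, and $u_m\to u_{\varphi,f}$ uniformly on $I$ by the continuity statement of Part 1. It only remains to see that each $u_m$ is Lipschitz.

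The genuine obstacle is precisely this last point, because $T_0^{\odot\star}$ is only weak$^\star$ continuous on $X^{\odot\star}$, so Lipschitz regularity of the convolution term is \emph{not} automatic for a merely continuous integrand $h_m(\tau):=B(\tau)u_m(\tau)+f_m(\tau)$ — in general $h_m(\tau)\notin X^{\odot\odot}=j(X)$, the subspace on which $T_0^{\odot\star}$ restricts to a $C_0$-semigroup. The point is that Lipschitz-continuity of $h_m$ (which holds because $B$ and $f_m$ are Lipschitz and $u_m$ is — once we know it — Lipschitz) is enough: after the substitution $\sigma=t-\tau$ one writes $u_m(t)=T_0(t-s)\varphi+j^{-1}\int_0^{t-s}T_0^{\odot\star}(\sigma)h_m(t-\sigma)\,d\sigma$ and splits the difference $u_m(t_1)-u_m(t_2)$ into a piece controlled by $|t_1-t_2|\,M\,\|h_m\|_\infty$ (extra length of integration) and a piece controlled by $|t_1-t_2|\,M\,|I|\,[h_m]_{\Lip}$ (Lipschitz-continuity of the integrand), the troublesome $T_0^{\odot\star}(\sigma)$ factor entering only through the uniform bound $M$. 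On a subinterval $I_0$ short enough that $\|j^{-1}\|M|I_0|\,\|B\|_\infty<1$ one absorbs the resulting $[u_m]_{\Lip}$ contribution coming from $[B(\cdot)u_m(\cdot)]_{\Lip}$ and concludes $u_m|_{I_0}\in\Lip(I_0,X)$; then $u_m$ solves the abstract ODE \eqref{eq:T-LAODEf2} on $I_0$, so $u_m(\sup I_0)\in j^{-1}\mathcal{D}(A_0^{\odot\star})$ and one repeats, covering $I$ in finitely many steps. Equivalently — and this is the route I would take to keep the exposition short — one invokes the regularity theory of \cite[Theorem 4.6, Theorem 4.9 and Theorem 4.14]{Clement1988} for the affine perturbation $\tau\mapsto B(\tau)\,\cdot\,+f_m(\tau)$ to see directly that $u_m$ is a classical solution of \eqref{eq:T-LAODEf2}, so $j u_m$ is weak$^\star$ continuously differentiable with $\frac{d^\star}{dt}(ju_m)=A_0^{\odot\star}ju_m+B(\cdot)u_m+f_m$ norm-bounded on $I$ (uniform boundedness on the compact interval), whence $u_m$ is Lipschitz. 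This completes the plan; the argument runs parallel to \cite[Proposition 20]{Janssens2020}, the time-dependence entering only through uniform-in-$I$ bounds on $B$.
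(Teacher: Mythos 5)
Your Part 1 is essentially the paper's argument: the same affine operator, an equivalent Bielecki norm on $C(I,X)$ making it a uniform contraction, and continuous dependence on $(\varphi,f)$ via the uniform contraction principle.

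For Part 2 you take a genuinely different route, and it is worth comparing. The paper does \emph{not} choose the $f_m$ in advance and then solve; it takes an arbitrary $u_0\in\Lip(I,X)$, forms the Picard iterates $u_m:=K_{\varphi,f}u_{m-1}$ (each of which is Lipschitz because $K_{\varphi,f}$ maps $\Lip(I,X)$ into itself, using the Favard-class regularity of $T_0(\cdot-s)\varphi$ and weak$^\star$ differentiability of the convolution with a locally Lipschitz integrand), and then \emph{defines} $f_m:=f+B(\cdot)(u_{m-1}-u_m)$ so that the pair $(u_m,f_m)$ satisfies \eqref{eq:um fm} exactly; $f_m\to f$ and $u_m\to u_{\varphi,f}$ uniformly by the contraction estimate. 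The whole point of this device is to avoid ever having to prove that the exact solution of \eqref{eq:T-LAIEf2} with Lipschitz data is itself Lipschitz — which is precisely the regularity statement your construction ($f_m$ piecewise affine, $u_m:=u_{\varphi,f_m}$) forces you to establish.

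That regularity step is where your proposal has a genuine gap. Your absorption argument derives an inequality of the form $[u_m]_{\Lip(I_0)}\le C+\theta\,[u_m]_{\Lip(I_0)}$ with $\theta<1$ and concludes $[u_m]_{\Lip(I_0)}<\infty$; this inference is invalid unless you already know a priori that $[u_m]_{\Lip(I_0)}$ is finite, which is exactly what is in question. (It can be repaired by setting up the contraction directly on the Banach space $\Lip(I_0,X)$ with the norm $\|u\|_\infty+[u]_{\Lip}$, or by showing the Picard iterates have uniformly bounded Lipschitz constants, but as written the step does not stand.) Your alternative route via \cite[Theorems 4.6, 4.9, 4.14]{Clement1988} also does not apply directly: those results concern the homogeneous linear problem with perturbation $B(t)$, not the affine inhomogeneous equation with the extra forcing $f_m$, and within this paper the passage from the integral equation to the abstract ODE (\Cref{prop:AIE1 ODE1}) itself \emph{presupposes} that the solution is locally Lipschitz, so invoking it here would be circular. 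Either repair the Lipschitz-norm contraction argument or adopt the paper's corrected-iterate construction.
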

\begin{proof}
We show the first claim by a fixed point argument. Choose $M \geq 1$ and $\omega \in \mathbb{R}$ such that $\|T_0(t)\| \leq Me^{\omega t}$. On the space $C(I,X)$, we introduce the one-parameter family of equivalent norms
\begin{equation*}
    \|u\|_\eta := \sup_{t \in I} e^{-\eta t} \|u(t)\|, \quad \eta \in \mathbb{R},
\end{equation*}
that makes $(C(I,X), \|\cdot\|_\eta)$ a Banach space for each $ \eta \in \mathbb{R}$. For each fixed $(\varphi,f) \in X \times C(I,X^{\odot \star})$ define the operator $K_{\varphi,f} : C(I,X) \to C(I,X)$ as
\begin{equation} \label{eq:Kphif}
    (K_{\varphi,f}u)(t) := T_0(t-s)\varphi + j^{-1} \int_s^t T_0^{\odot \star}(t-\tau) [B(\tau)u(\tau) + f(\tau)] d\tau, \quad \forall t \in I.
\end{equation}
Define $N := \sup_{(t,s) \in \Omega_I} W(t,s)$ and notice that $N$ is finite because $I$ is compact and $W$ is continuous, where $W : \Omega_I \to \mathbb{R}$ is defined as $W(t,s) := \sup_{s \leq \tau \leq t} \|B(\tau)\|$. Let $\eta > \omega$, then for all $u_1,u_2 \in C(I,X)$ and $t \in I$ we get
\begin{align*}
    e^{-\eta t} \|(K_{\varphi,f}u_1)(t) - (K_{\varphi,f}u_2)(t)\| & \leq \|j^{-1}\| M N \int_s^t e^{-(\eta-\omega) (t-\tau)} e^{-\eta \tau}\|u_1(\tau) - u_2(\tau)\| d\tau \\
    & \leq \|j^{-1}\| M N \|u_1 - u_2\|_{\eta} \int_s^t e^{-(\eta-\omega) (t-\tau)}  d\tau \\
    &= \frac{\|j^{-1}\| M N (1 - e^{-(t-s)(\eta-\omega)})}{\eta - \omega} \|u_1 - u_2\|_{\eta} \\
    & \leq \frac{\|j^{-1}\| M N}{\eta - \omega} \|u_1 - u_2\|_{\eta}.
\end{align*}
If we choose $\eta > \omega$ large enough such that $\frac{\|j^{-1}\| M N}{\eta - \omega} \leq \frac{1}{2}$, then $K_{\varphi,f}$ is a contraction on $C(I,X)$ with respect to the $\|\cdot\|_\eta$-norm. The uniqueness of $u$ now follows from the Banach fixed point theorem. For a fixed $u \in C(I,X)$, it follows that the map
\begin{equation*}
    X \times C(I,X^{\odot \star}) \ni (\varphi,f) \mapsto K_{\varphi,f}u \in C(I,X)
\end{equation*}
is continuous. 

Let us now show the second assertion. Let $\varphi \in j^{-1} \mathcal{D}(A_0^{\odot \star})$ and $f$ be locally Lipschitz, we will show that $K_{\varphi,f}$ maps $\Lip(I,X)$ into itself, where $\Lip(I,X)$ denotes the subspace of $C(I,X)$ consisting of $X$-valued Lipschitz continuous functions defined on $I$. From the theory of Favard classes of $\mathcal{C}_0$-semigroups and the important equalities \cite[Equation (19)]{Janssens2020}, it follows immediately that $T_0(\cdot)\varphi$ is in $\Lip(I,X)$. Let $u \in \Lip(I,X)$ be given, since $B$ is Lipschitz continuous and $f$ is assumed to be locally Lipschitz we know that $t \mapsto B(t)u(t) + f(t)$ is locally Lipschitz on $I$ and takes values in $X^{\odot \star}$. Hence, the map $v_1(\cdot,s,B(\cdot)u+f) : I \to j(X)$ defined by
\begin{equation*}
    v_1(t,s,B(\cdot)u+f) := \int_s^t T_0^{\odot \star}(t-\tau)[B(\tau)u(\tau)+f(\tau)] d\tau, \quad \forall t \in I,
\end{equation*}
is weak$^\star$ continuously differentiable and so locally Lipschitz by \cite[Remark 16]{Janssens2020}. It follows that $K_{\varphi,f}u = T_0(\cdot)\varphi + j^{-1}v_1(\cdot,s,B(\cdot)u+f)$ is in $\Lip(I,X)$. Now, let $u_0 \in \Lip(I,X)$ be arbitrary. The sequence $(u_m)_{m \in \mathbb{N}}$ defined by
\begin{equation*}
    u_m := K_{\varphi,f} u_{m-1}, \quad m \geq 1,
\end{equation*}
is in $\Lip(I,X)$. We only have to show that there exists a sequence of $X^{\odot \star}$-valued Lipschitz continuous functions $(f_m)_{m \in \mathbb{N}}$ defined on $I$ that satisfies the integral formula. It follows from \eqref{eq:Kphif} that for any $t \in I$ and $m \geq 1$ we have
\begin{align*}
    u_m(t) &= K_{\varphi,f} u_{m-1}(t) \\
    &= T_0(t-s)\varphi + j^{-1} \int_s^t T_0^{\odot \star}(t-\tau)[B(\tau)u_m(\tau) + f(\tau) + B(\tau)[u_{m-1}(\tau) - u_m(\tau)]] d\tau. 
\end{align*}
If we define for any $m \geq 1$ the functions $f_m : I \to X^{\odot \star}$ as $f_m:= f + B(\cdot)(u_{m-1} - u_m)$, then each $f_m$ is Lipschitz continuous and $f_m \to f$ uniformly on $I$ because
\begin{align*}
    \|f_m - f \| &\leq \sup_{(t,s) \in \Omega_I}W(t,s) \|u_{m-1} - u_m\| \\
    &\leq N [\|u_{m-1} - u_{\varphi,f}\| + \|u_{\varphi,f} - u_m\|] \\
    &\to 0, \quad \mbox{ as }  m \to \infty,
\end{align*}
as both $u_{m-1}$ and $u_m$ converge to $u_{\varphi,f}$ uniformly on $I$ as $m \to \infty$.
\end{proof}

The next proposition shows under which conditions on $\varphi$ and $f$ the unique solution to the abstract integral equation \eqref{eq:T-LAIEf2}, proven in \Cref{prop:unique TLAIEf1}, induces a solution to the abstract ordinary differential equation \eqref{eq:T-LAODEf2}. The proof is inspired by \cite[Corollary 19]{Janssens2020}.

\begin{proposition} \label{prop:AIE1 ODE1}
Suppose that $\varphi \in j^{-1} \mathcal{D}(A_0^{\odot \star})$ and $f : J \to X^{\odot \star}$ is locally Lipschitz. If $u$ is a locally Lipschitz  solution of \eqref{eq:T-LAIEf2} on a subinterval $I$ of $J$ then $u$ is a solution of \eqref{eq:T-LAODEf2} on $I$.
\end{proposition}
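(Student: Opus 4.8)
The plan is to split $j\circ u$ into a homogeneous part and a convolution part, recognise each as a weak$^\star$ classical solution of the corresponding abstract ODE, and then add. Since ``being a solution of \eqref{eq:T-LAODEf2} on $I$'' is a local property (weak$^\star$ differentiability and continuity of the derivative at each point, plus the pointwise membership in $\mathcal{D}(A_0^{\odot\star})$), and since $u$ is by hypothesis Lipschitz on every compact subinterval, it suffices to work on an arbitrary compact subinterval of $I$ containing $s$. Put $g := B(\cdot)u(\cdot) + f(\cdot)$. Because $B:J\to\mathcal{L}(X,X^{\odot\star})$ is (globally) Lipschitz, $u$ is locally Lipschitz on $I$ and $f$ is locally Lipschitz, the map $g:I\to X^{\odot\star}$ is locally Lipschitz. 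Using that $T_0^{\odot\star}$ extends $T_0$, i.e.\ $T_0^{\odot\star}(t)j = jT_0(t)$, equation \eqref{eq:T-LAIEf2} becomes
\begin{equation*}
    j\circ u(t) = T_0^{\odot\star}(t-s)j\varphi + v_1(t,s,g), \qquad v_1(t,s,g) := \int_s^t T_0^{\odot\star}(t-\tau)g(\tau)\,d\tau ,
\end{equation*}
where $v_1(\cdot,s,g)$ is a well-defined, continuous, $j(X)$-valued map by \Cref{lemma:wk*integral Usunstar} applied to the (time-independent) evolutionary system induced by $T_0$.

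First I would handle the homogeneous term. By assumption $j\varphi\in\mathcal{D}(A_0^{\odot\star})$, and since $\mathcal{D}(A_0^{\odot\star})$ is invariant under $T_0^{\odot\star}$ and $A_0^{\odot\star}$ is the weak$^\star$ generator of $T_0^{\odot\star}$, the map $t\mapsto T_0^{\odot\star}(t-s)j\varphi$ is weak$^\star$ continuously differentiable on $I$, takes values in $\mathcal{D}(A_0^{\odot\star})$, and satisfies $d^\star\big(T_0^{\odot\star}(t-s)j\varphi\big) = A_0^{\odot\star}T_0^{\odot\star}(t-s)j\varphi$. Next the convolution term: because $g$ is Lipschitz on the compact subinterval under consideration, the standard variation-of-constants result for Lipschitz forcing — the time-independent analogue of the estimates used in the proof of \Cref{prop:unique TLAIEf1}, which rests on the Favard-class identities of \cite[Section 3, Remark 16]{Janssens2020} — yields that $v_1(\cdot,s,g)$ is weak$^\star$ continuously differentiable on $I$, takes values in $\mathcal{D}(A_0^{\odot\star})$, satisfies $v_1(s,s,g)=0$, and
\begin{equation*}
    d^\star v_1(t,s,g) = A_0^{\odot\star}v_1(t,s,g) + g(t), \qquad t\in I .
\end{equation*}

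Finally I would add the two contributions. Since $\mathcal{D}(A_0^{\odot\star})$ is a linear subspace, weak$^\star$ differentiation is linear, and $A_0^{\odot\star}$ is linear, the function $j\circ u$ is weak$^\star$ continuously differentiable on $I$, satisfies $j\circ u(t)\in\mathcal{D}(A_0^{\odot\star})$ for all $t\in I$ — equivalently $u(t)\in j^{-1}\mathcal{D}(A_0^{\odot\star})$ — and
\begin{equation*}
    d^\star(j\circ u)(t) = A_0^{\odot\star}\big(T_0^{\odot\star}(t-s)j\varphi + v_1(t,s,g)\big) + g(t) = A_0^{\odot\star}ju(t) + B(t)u(t) + f(t) ,
\end{equation*}
while $u(s)=\varphi$ is immediate from \eqref{eq:T-LAIEf2}. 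Thus $u$ is a solution of \eqref{eq:T-LAODEf2} on $I$. The only non-formal ingredient, and the step I expect to be the main obstacle, is the classical-solution property of the convolution $v_1(\cdot,s,g)$ for merely locally Lipschitz (rather than $C^1$) forcing $g$: this is exactly where one must invoke the sun-star / Favard-class machinery of \cite{Janssens2020}, carried over to the present time-dependent but still $\odot$-reflexive setting, just as it was used for $T_0(\cdot)\varphi$ and for $v_1$ in the proof of \Cref{prop:unique TLAIEf1}.
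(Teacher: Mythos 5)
Your proposal is correct and follows essentially the same route as the paper's proof: split $j\circ u$ into the semigroup orbit of $j\varphi$ (handled via invariance of $\mathcal{D}(A_0^{\odot\star})$ and the weak$^\star$ generator property) plus the convolution $v_1(\cdot,s,g)$ with the locally Lipschitz forcing $g = B(\cdot)u(\cdot)+f(\cdot)$, whose weak$^\star$ differentiability the paper obtains from \cite[Proposition 2.2]{Clement1989} (equivalently \cite[Proposition 18]{Janssens2020}) — exactly the Favard-class ingredient you identify as the crux — and then conclude by linearity.
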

\begin{proof}
If we apply $j$ to the abstract integral equation in \eqref{eq:T-LAIEf2}, we get for any $t \in I$ that
\begin{equation} \label{eq:u(t) var}
    ju(t) = T_0^{\odot \star}(t-s)j\varphi + \int_s^t T_0^{\odot \star}(t-\tau) [B(\tau)u(\tau) + f(\tau)] d\tau.
\end{equation}
It follows from the theory of Favard classes for $\mathcal{C}_0$-semigroups \cite[Equation (19)]{Janssens2019} that $\mathcal{D}(A_0^{\odot \star})$ is $T_0^{\odot \star}$-invariant. Hence, the first term on the right side takes values in $\mathcal{D}(A_0^{\odot \star})$ and notice from a $\odot$-variant of \cite[Theorem 2.1]{Clement1987} that this term is weak$^\star$ continuously differentiable with weak$^\star$ derivative
\begin{equation*}
    d^\star (T_0^{\odot\star}(\cdot-s) j\varphi)(t) = A_0^{\odot \star} T_0^{\odot \star}(t-s)j\varphi,
\end{equation*}
Now, $f$ and $u$ are locally Lipschitz continuous functions on $I \subseteq J$ and $B$ is by definition of the time-dependent bounded linear perturbation on $J$. Hence, $g : I \to X^{\odot \star}$ defined by $g(\tau) := B(\tau)u(\tau) + f(\tau)$ for all $\tau \in I$ is locally Lipschitz. Define the function $v_1(\cdot,s,g) : I \to j(X)$ as
\begin{equation*}
    v_1(t,s,g) := \int_s^t T_0^{\odot \star}(t-\tau) g(\tau) d \tau, \quad \forall t \in I.
\end{equation*}
It is clear from \cite[Proposition 2.2]{Clement1989} (or \cite[Proposition 18]{Janssens2020}) that $v_1(\cdot,s,g)$ is weak$^\star$ continuously differentiable, takes values in $\mathcal{D}(A_0^{\odot \star})$ and has weak$^\star$ derivative
\begin{equation*}
    d^\star (v_1(\cdot,s,g))(t) = A_0^{\odot \star}v_1(t,s,g) + g(t).
\end{equation*}
By linearity, it is clear from \eqref{eq:u(t) var} that $u$ takes values in $j^{-1} \mathcal{D}(A_0^{\odot \star})$. Combining all the results yield
\begin{align*}
    d^\star(j \circ u)(t) &= A_0^{\odot \star} T_0^{\odot \star}(t-s)j\varphi + A_0^{\odot \star} \int_s^t T_0^{\odot \star}(t-\tau) [B(\tau)u(\tau) + f(\tau)] d\tau + B(t)u(t) + f(t)\\
    &=  A_0^{\odot \star}ju(t) + B(t)u(t) + f(t).
\end{align*}
This shows that $j \circ u$ is weak$^\star$ continuously differentiable and satisfies \eqref{eq:T-LAODEf2} on $I$ since $u(s) = \varphi$. We conclude that $u : I \to X$ is a solution of \eqref{eq:T-LAODEf2} on $I$.
\end{proof}

Let $u$ be the solution of \eqref{eq:T-LAODEf2} on a subinterval $I$ of $J$ generated by \Cref{prop:AIE1 ODE1}. Hence, $u$ is also a solution of \eqref{eq:T-LAODEf} on $I$ by the equivalence between \eqref{eq:T-LAODEf} and \eqref{eq:T-LAODEf2}. Our next goal is to show that solutions of \eqref{eq:T-LAODEf} are precisely given by the variation-of-constants suggestion presented in \eqref{eq:T-LAIEf}. The proof is inspired by \cite[Proposition 21]{Janssens2020}.

\begin{proposition} \label{prop:T-f 2}
Suppose that $f \in C(J,X^{\odot \star})$ and $I$ is a subinterval of $J$. If $u$ is a solution of \eqref{eq:T-LAODEf} on $I$ then $u$ is given by \eqref{eq:T-LAIEf}.
\end{proposition}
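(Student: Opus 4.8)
The plan is to reverse the logic of the preceding two propositions: given a solution $u$ of \eqref{eq:T-LAODEf} on $I$, I want to recover the variation-of-constants representation \eqref{eq:T-LAIEf}. Since a solution of \eqref{eq:T-LAODEf} is by \eqref{eq:defAsunstar s} simultaneously a solution of \eqref{eq:T-LAODEf2}, it suffices to work with the latter form, where the generator is the fixed operator $A_0^{\odot\star}$ and the time-dependence has been pushed into the inhomogeneity $B(\cdot)u(\cdot)+f(\cdot)$. The first step is therefore to set $g(\tau):=B(\tau)u(\tau)+f(\tau)$ and observe that, because $u$ is (by definition of a solution) a continuous $X$-valued function and $B$ is Lipschitz and $f$ is continuous, the map $g:I\to X^{\odot\star}$ is continuous; hence the weak$^\star$ Riemann integral in \eqref{eq:T-LAIEf} makes sense by \Cref{lemma:wk*integral Usunstar} (or \cite[Lemma 2.2]{Clement1988}).

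Next I would introduce, for fixed $t\in I$, the auxiliary $X^{\odot\star}$-valued function
\[
    \sigma\mapsto T_0^{\odot\star}(t-\sigma)\,j u(\sigma),\qquad \sigma\in[s,t],
\]
and show that it is weak$^\star$ continuously differentiable on $[s,t]$ with weak$^\star$ derivative $-A_0^{\odot\star}T_0^{\odot\star}(t-\sigma)ju(\sigma)+T_0^{\odot\star}(t-\sigma)\,d^\star(j\circ u)(\sigma)$. This is the Leibniz/product-rule computation: $j\circ u$ is weak$^\star$ continuously differentiable and takes values in $\mathcal{D}(A_0^{\odot\star})$ because $u$ solves \eqref{eq:T-LAODEf2}, while $\sigma\mapsto T_0^{\odot\star}(t-\sigma)x^{\odot\star}$ is weak$^\star$ differentiable for $x^{\odot\star}\in\mathcal{D}(A_0^{\odot\star})$ by a $\odot$-variant of \cite[Theorem 2.1]{Clement1987}. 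Using the ODE \eqref{eq:T-LAODEf2} to substitute $d^\star(j\circ u)(\sigma)=A_0^{\odot\star}ju(\sigma)+g(\sigma)$, the two $A_0^{\odot\star}$-terms cancel and the weak$^\star$ derivative collapses to $T_0^{\odot\star}(t-\sigma)g(\sigma)$. Integrating this identity in $\sigma$ from $s$ to $t$ — i.e. pairing with an arbitrary $x^\odot\in X^\odot$, applying the scalar fundamental theorem of calculus, and then invoking the definition of the weak$^\star$ Riemann integral together with the weak$^\star$ density of $X^\odot$ — gives $ju(t)-T_0^{\odot\star}(t-s)j\varphi=\int_s^t T_0^{\odot\star}(t-\tau)g(\tau)\,d\tau$. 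Applying $j^{-1}$ (valid since the integral lands in $j(X)$ under $\odot$-reflexivity) and recalling that $T_0(t-s)\varphi=j^{-1}T_0^{\odot\star}(t-s)j\varphi$ yields exactly \eqref{eq:T-LAIEf2}; by the equivalence noted above this is \eqref{eq:T-LAIEf}, completing the argument.

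The main obstacle is the justification of the termwise weak$^\star$ differentiation of $\sigma\mapsto T_0^{\odot\star}(t-\sigma)ju(\sigma)$ and the subsequent passage from the differentiated identity to the integrated one. One must be careful that $ju(\sigma)$ lies in $\mathcal{D}(A_0^{\odot\star})$ for every $\sigma$ (which it does, since $u$ solves \eqref{eq:T-LAODEf2}), that the product rule for weak$^\star$ derivatives is available in this setting, and that the resulting weak$^\star$ continuous integrand can legitimately be integrated against the weak$^\star$ Riemann integral and then have $j^{-1}$ applied — all of which rely on $\odot$-reflexivity and the results already cited in \Cref{subsec: linear perturbation}. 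Everything else is bookkeeping parallel to \cite[Proposition 21]{Janssens2020}, with the only new feature being that the inhomogeneity now carries the time-dependent piece $B(\cdot)u(\cdot)$, which is harmless because it has already been absorbed into the continuous function $g$.
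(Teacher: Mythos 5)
Your argument correctly shows that a solution $u$ of \eqref{eq:T-LAODEf} (equivalently, of \eqref{eq:T-LAODEf2}) satisfies the \emph{free-semigroup} variation-of-constants formula \eqref{eq:T-LAIEf2}: the product-rule computation for $\sigma\mapsto T_0^{\odot\star}(t-\sigma)ju(\sigma)$ is sound, since $ju(\sigma)\in\mathcal{D}(A_0^{\odot\star})$ and the $\odot$-variant of \cite[Theorem 2.1]{Clement1987} supplies the weak$^\star$ derivative of the semigroup factor. The gap is in the very last step. You conclude "by the equivalence noted above this is \eqref{eq:T-LAIEf}", but no such equivalence between the two \emph{integral} equations is available at this point. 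The equivalence you established in your first paragraph is only between the two ODEs \eqref{eq:T-LAODEf} and \eqref{eq:T-LAODEf2}, which is immediate from \eqref{eq:defAsunstar s}; the equivalence between \eqref{eq:T-LAIEf2} and \eqref{eq:T-LAIEf} is precisely the content of \Cref{prop:AIEf}, whose proof in the paper \emph{uses} \Cref{prop:T-f 2}. Invoking it here makes the argument circular, and without it your derivation terminates at \eqref{eq:T-LAIEf2}, which is a genuinely different formula from the asserted \eqref{eq:T-LAIEf} (it involves $T_0^{\odot\star}$ and the perturbation $B(\cdot)u(\cdot)$ rather than $U^{\odot\star}$ and $f$ alone).

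The paper's proof goes through the perturbed evolutionary system directly: it sets $w(\tau):=U^{\odot\star}(t,\tau)ju(\tau)$ and differentiates, which requires the nontrivial fact (\cite[Theorem 5.5]{Clement1988}) that
\begin{equation*}
\frac{1}{h}\bigl\langle [U^{\odot\star}(t,\tau+h)-U^{\odot\star}(t,\tau)]ju(\tau),x^\odot\bigr\rangle \to \bigl\langle -U^{\odot\star}(t,\tau)A^{\odot\star}(\tau)ju(\tau),x^\odot\bigr\rangle
\end{equation*}
for $ju(\tau)\in\mathcal{D}(A^{\odot\star}(\tau))=\mathcal{D}(A_0^{\odot\star})$, together with the local Lipschitz continuity of $j\circ u$ to control the difference quotients. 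This is the technical ingredient your route bypasses, and it is exactly what produces the integrand $U^{\odot\star}(t,\tau)f(\tau)$ rather than $T_0^{\odot\star}(t-\tau)[B(\tau)u(\tau)+f(\tau)]$. To repair your proof you would either have to redo the computation with $U^{\odot\star}$ in place of $T_0^{\odot\star}$ (i.e., reproduce the paper's argument), or independently prove that the right-hand side of \eqref{eq:T-LAIEf} solves \eqref{eq:T-LAIEf2} and appeal to the uniqueness in \Cref{prop:unique TLAIEf1} — but that independent verification is essentially \Cref{prop:AIEf} again and requires comparable work relating $U^{\odot\star}$ to $T_0^{\odot\star}$.
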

\begin{proof}
Let $t \in I$ be given with $t > s$, where $s$ denotes the starting time. Define the function $w: [s,t] \to X^{\odot \star}$ by $w(\tau) := U^{\odot \star}(t,\tau)ju(\tau)$ for all $\tau \in [s,t]$. We claim that $w$ is weak$^\star$ continuously differentiable with weak$^\star$ derivative
\begin{equation} \label{eq:d*w}
    d^\star w(\tau) = U^{\odot \star}(t,\tau)d^{\star}(j \circ u)(\tau) - U^{\odot \star}(t,\tau)A^{\odot \star}(\tau)ju(\tau), \quad \forall \tau \in [s,t].
\end{equation}
To show this claim, let $\tau \in [s,t]$ and $x^\odot \in X^\odot$ be given. For any $h \in \mathbb{R}$ such that $\tau + h \in [s,t]$ we have
\begin{align*} 
    \langle w(\tau+h) - w(\tau), x^\odot \rangle &=  \langle U^{\odot \star}(t,\tau + h)ju(\tau + h) - U^{\odot \star}(t,\tau)ju(\tau),x^\odot \rangle \nonumber \\
    &= \langle U^{\odot \star}(t,\tau + h)[ju(\tau +h) - ju(\tau)],x^\odot \rangle \nonumber \\
    &+ \langle [U^{\odot \star}(t,\tau + h) - U^{\odot \star}(t,\tau)]ju(\tau),x^\odot \rangle \nonumber\\
    &= \langle ju(\tau +h) - ju(\tau), U^{\odot}(\tau + h,t)x^\odot \rangle  \nonumber\\
    &+ \langle [U^{\odot \star}(t,\tau + h) - U^{\odot \star}(t,\tau)]ju(\tau),x^\odot \rangle.
\end{align*}
Because $U^\odot$ is a strongly continuous backward evolutionary system, we have that $U^{\odot}(\tau + h,t)x^\odot \to U^{\odot}(\tau,t)x^\odot$ in norm as $h \to 0$. Moreover, from the definition of the weak$^\star$ derivative we obtain
\begin{equation*}
    \frac{1}{h}(ju(\tau+h) - ju(\tau)) \to d^\star (j \circ u) (\tau) \quad \mbox{ weakly$^\star$ as } h\to 0,
\end{equation*}
if we can show that the difference quotients remains bounded in the limit. Since $u$ is a solution to \eqref{eq:T-LAODEf}, we know that $j \circ u$ is weak$^\star$ continuously differentiable and so locally Lipschitz continuous by \cite[Remark 16]{Janssens2020}. Because $[s,t]$ is compact, $j \circ u$ is Lipschitz continuous on $[s,t]$ and so the difference quotients remain bounded in the limit. Combining these two facts yield
\begin{equation*}
    \frac{1}{h}\langle ju(\tau +h) - ju(\tau), U^{\odot}(\tau + h,t)x^\odot \rangle \to \langle d^\star (j \circ u) (\tau), U^\odot(t,\tau)x^\odot \rangle \quad \mbox{ as } h \to 0.
\end{equation*}
Furthermore, since $ju(\tau) \in \mathcal{D}(A^{\odot \star}(\tau)) = \mathcal{D}(A_0^{\odot \star})$, it follows from \cite[Theorem 5.5]{Clement1988} that
\begin{equation*}
    \frac{1}{h} \langle [U^{\odot \star}(t,\tau + h) - U^{\odot \star}(t,\tau)]ju(\tau),x^\odot \rangle \to \langle -U^{\odot \star}(t,\tau)A^{\odot \star}(\tau) ju(\tau), x^\odot \rangle \quad \mbox{ as } h \to 0.
\end{equation*}
Consequently, it holds
\begin{equation*}
   \frac{1}{h} \langle w(\tau+h) - w(\tau), x^\odot \rangle \to \langle  U^{\odot \star}(t,\tau)d^{\star}(j \circ u)(\tau) - U^{\odot \star}(t,\tau)A^{\odot \star}(\tau)ju(\tau), x^{\odot} \rangle \quad \mbox{ as } h \to 0,
\end{equation*}
which proves \eqref{eq:d*w}. Substituting the differential equation from \eqref{eq:T-LAODEf} into \eqref{eq:d*w} yields
\begin{equation*}
    d^\star w (\tau) = U^{\odot \star}(t,\tau)f(\tau), \quad \forall \tau \in [s,t],
\end{equation*}
and so $d^\star w$ is weak$^\star$ continuous since $f$ was assumed to be (norm) continuous. Now, for any $x^\odot \in X^\odot$ we get
\begin{align*}
    \langle ju(t) - U^{\odot \star}(t,s)ju(s),x^\odot \rangle &= \langle w(t), x^\odot \rangle - \langle w(s), x^\odot \rangle \\
    &= \int_s^t \langle d^\star w(\tau), x^\odot \rangle d\tau
    = \langle \int_s^t U^{\odot \star}(t,\tau)f(\tau) d\tau, x^\odot \rangle .
\end{align*}
As $x^\odot \in X^{\odot}$ and $t > s$ were arbitrary, we conclude that
\begin{equation*} 
    ju(t) - U^{\odot \star}(t,s)ju(s) = \int_s^t U^{\odot \star}(t,\tau)f(\tau) d\tau.
\end{equation*}
and so
\begin{equation*}
    j[u(t) - U(t,s)u(s)] = \int_s^t U^{\odot \star}(t,\tau)f(\tau) d\tau.
\end{equation*}
By $\odot$-reflexivity of $X$ with respect to $T_0$, and recalling that $j$ is an isomorphism on its image $X^{\odot \odot}$ we get
\begin{equation} \label{eq:inhom u}
    u(t) = U(t,s)u(s) + j^{-1} \int_s^t U^{\odot \star}(t,\tau)f(\tau) d\tau, \quad \forall t \in I,
\end{equation}
which shows the claim since $\varphi = u(s)$. The continuity of $f$ ensures from \Cref{lemma:wk*integral Usunstar} that the weak$^\star$ integral takes values in $j(X)$ and so \eqref{eq:inhom u} is well-defined.
\end{proof}

Let us go full circle now by proving that the unique solutions of \eqref{eq:T-LAIEf2} are given by \eqref{eq:T-LAIEf}. The following result is inspired by \cite[Theorem 22]{Janssens2020}.

\begin{proposition}\label{prop:AIEf}
Suppose that $f \in C(J,X^{\odot \star})$ and $I$ is a subinterval of $J$. The unique solution of \eqref{eq:T-LAIEf2} on $I$ is given by \eqref{eq:T-LAIEf}.
\end{proposition}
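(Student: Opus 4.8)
The plan is to chain together \Cref{prop:unique TLAIEf1}, \Cref{prop:AIE1 ODE1} and \Cref{prop:T-f 2}, and then remove, by a double approximation, the extra regularity these results require. First I would reduce to the case of a \emph{compact} subinterval $I = [s,t_1] \subseteq J$: both members of \eqref{eq:T-LAIEf} and the identity \eqref{eq:T-LAIEf2} are statements about $u$ on finite time horizons, and an arbitrary subinterval is an increasing union of compact ones, so it suffices to argue on each $[s,t_1]$. On such an $I$, \Cref{prop:unique TLAIEf1}(1) gives the unique solution $u_{\varphi,f}\in C(I,X)$ of \eqref{eq:T-LAIEf2}, depending continuously on $(\varphi,f)\in X\times C(I,X^{\odot \star})$, while the right-hand side of \eqref{eq:T-LAIEf} is well defined and continuous in $t$ by \Cref{lemma:wk*integral Usunstar}. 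I would also record the bound $M_I := \sup\{\|U^{\odot \star}(t,\tau)\| : s\le \tau\le t,\ t\in I\} < \infty$, which holds because $U^\odot$ is strongly continuous on the compact set $\{(\tau,t): s\le\tau\le t\le t_1\}$, hence uniformly bounded there by the uniform boundedness principle, and $\|U^{\odot \star}(t,\tau)\| = \|U^\odot(\tau,t)\|$. Together with the norm estimate $\|\int g\|\le\int\|g\|$ for weak$^\star$ Riemann integrals, this yields $\|j^{-1}\int_s^t U^{\odot \star}(t,\tau)(g_1-g_2)(\tau)\,d\tau\| \le \|j^{-1}\|\,M_I\,|I|\,\sup_{\tau\in I}\|g_1(\tau)-g_2(\tau)\|$ for continuous $g_1,g_2:I\to X^{\odot \star}$, the key continuity estimate used twice below.

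Next I would prove the statement when $\varphi\in j^{-1}\mathcal{D}(A_0^{\odot \star})$ and $f$ is Lipschitz on $I$. By \Cref{prop:unique TLAIEf1}(2) there are Lipschitz functions $u_m, f_m$ on $I$ with $f_m\to f$ and $u_m\to u_{\varphi,f}$ uniformly, where each $u_m$ solves \eqref{eq:T-LAIEf2} with inhomogeneity $f_m$ (cf. \eqref{eq:um fm}), so $u_m = u_{\varphi,f_m}$ by uniqueness. Since $u_m$ is Lipschitz, \Cref{prop:AIE1 ODE1} (applied with $f_m$, extended continuously to $J$ if needed) shows $u_m$ solves \eqref{eq:T-LAODEf2}, hence \eqref{eq:T-LAODEf}, with inhomogeneity $f_m$; then \Cref{prop:T-f 2} gives $u_m(t) = U(t,s)\varphi + j^{-1}\int_s^t U^{\odot \star}(t,\tau)f_m(\tau)\,d\tau$ on $I$. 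Letting $m\to\infty$, the left side converges to $u_{\varphi,f}$ uniformly on $I$, and by the estimate of the previous paragraph (with $g_1 = f_m$, $g_2 = f$, and closedness of $j(X)$ so that $j^{-1}$ passes through the limit) so does the right side to $U(\cdot,s)\varphi + j^{-1}\int_s^\cdot U^{\odot \star}(\cdot,\tau)f(\tau)\,d\tau$; hence $u_{\varphi,f}$ is given by \eqref{eq:T-LAIEf}.

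Finally I would remove the regularity assumptions. Given arbitrary $\varphi\in X$ and $f\in C(I,X^{\odot \star})$, pick $\varphi_m\in j^{-1}\mathcal{D}(A_0^{\odot \star})$ with $\varphi_m\to\varphi$ in $X$ (possible since $\mathcal{D}(A_0)$ is dense in $X$ and, by $\odot$-reflexivity, $j\mathcal{D}(A_0)\subseteq\mathcal{D}(A_0^{\odot \star})$) and Lipschitz $f_m\in C(I,X^{\odot \star})$ with $f_m\to f$ uniformly (polygonal interpolation of the uniformly continuous $f$). The previous step applies to each pair $(\varphi_m,f_m)$, and letting $m\to\infty$ the left side $u_{\varphi_m,f_m}$ converges to $u_{\varphi,f}$ in $C(I,X)$ by \Cref{prop:unique TLAIEf1}(1), while on the right $U(\cdot,s)\varphi_m\to U(\cdot,s)\varphi$ in $C(I,X)$ (as $\sup_{t\in I}\|U(t,s)\|<\infty$) and the integral term converges as before. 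This gives \eqref{eq:T-LAIEf} on $I$, and hence on every subinterval after the reduction of the first paragraph.

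I expect the main obstacle to be organisational rather than technical: the point is that \Cref{prop:AIE1 ODE1} cannot be applied to $u_{\varphi,f}$ itself, since we have no a priori Lipschitz control on it, so the argument must be routed through the Lipschitz approximants $u_m$ furnished by \Cref{prop:unique TLAIEf1}(2) — securing exactly that regularity is the role of that proposition. The rest is careful bookkeeping for the weak$^\star$ Riemann integrals (uniform-in-$t$ passage to the limit on the compact $I$, the bound $\|\int g\|\le\int\|g\|$, and closedness of $j(X)$ so that $j^{-1}$ may be pulled through the limit) together with the two standard density facts invoked in the last step.
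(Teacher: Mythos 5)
Your proposal is correct and follows essentially the same route as the paper's proof: restrict to a compact subinterval, use \Cref{prop:unique TLAIEf1}(2) to produce the Lipschitz approximants $u_m,f_m$, push each $u_m$ through \Cref{prop:AIE1 ODE1} and \Cref{prop:T-f 2} (after a Lipschitz -- not merely continuous -- extension of $f_m$ to $J$, since \Cref{prop:AIE1 ODE1} needs local Lipschitz continuity on $J$), pass to the limit, and then remove the regularity hypotheses by density together with the continuity statement of \Cref{prop:unique TLAIEf1}(1). Your explicit bookkeeping of the limit estimates and the density facts only spells out steps the paper leaves implicit.
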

\begin{proof}
Let us first assume that $I$ is compact. From \Cref{prop:unique TLAIEf1} we get a unique solution $u_{\varphi,f} : I \to X$ of \eqref{eq:T-LAIEf2} and sequences of Lipschitz functions $u_m : I \to X$ and $f_m : I \to X^{\odot \star}$ that satisfy \eqref{eq:um fm}. For each $m \in \mathbb{N}$, let $\hat{f}_m : I \to X^{\odot \star}$ be a Lipschitz extension of $f_m$ such that $\hat{f}_m |_{I} = f_m$. Substituting $f$ with $\hat{f}_m$ and $u$ with $u_m$ in  \Cref{prop:AIE1 ODE1} shows us that each $u_m$ is a solution to the initial value problem
\begin{equation*}
    \begin{dcases}
            d^\star (j \circ u_m)(t) = A_0^{\odot \star}ju_m(t) + B(t)u_m(t) + \hat{f}_m(t), \quad t \in I, \\
            u_m(s) = \varphi.
    \end{dcases}
\end{equation*}
Recall from \eqref{eq:defAsunstar s} that each $u_m$ also is then also a solution of
\begin{equation*}
    \begin{dcases}
            d^\star (j \circ u_m)(t) = A^{\odot \star}(t)ju_m(t)  + \hat{f}_m(t), \quad & t \in I, \\
            u_m(s) = \varphi.
    \end{dcases}
\end{equation*}
It follows from \Cref{prop:T-f 2}, with $u$ replaced by $u_m$ and $f$ replaced by $\hat{f}_m$, that
\begin{equation} \label{eq: um}
    u_m(t) = U(t,s)\varphi + j^{-1} \int_s^t U^{\odot \star}(t,\tau) f_m(\tau) d\tau, \quad \forall m \in \mathbb{N}, \ t \in I,
\end{equation}
since $\hat{f}_m$ restricted to $I$ precisely is $f_m$. Let us take the limit as $m \to \infty$ in \eqref{eq: um} to obtain
\begin{equation} \label{eq: u_varpi,f}
    u_{\varphi,f}(t) = U(t,s)\varphi + j^{-1} \int_s^t U^{\odot \star}(t,\tau) f(\tau) d\tau, \quad  \forall t \in I,
\end{equation}
for all $(\varphi , f) \in  j^{-1} \mathcal{D}(A_0^{\odot \star}) \times \Lip(I,X^{\odot \star})$. As $j^{-1} \mathcal{D}(A_0^{\odot \star}) \times \Lip(I,X^{\odot \star})$ is dense in $X \times C(I,X^{\odot \star})$, the continuity statement from \Cref{prop:unique TLAIEf1} implies that \eqref{eq: u_varpi,f} also holds for all $\varphi \in X$ and $f \in C(I,X^{\odot \star})$. Hence, the unique solution of \eqref{eq:T-LAIEf2} on $I$ is given by \eqref{eq:T-LAIEf} on $I$. To extend this result towards non-compact subintervals $I$ of $J$ the same proof can be followed as in \cite[Theorem 22]{Janssens2020}.
\end{proof}

\subsection{Equivalence between \eqref{eq:T-DDEphi} and \eqref{eq:T-AIEphi}} \label{subsec:equivalence}

Let us now prove the important one-to-one correspondence between solutions of \eqref{eq:T-DDEphi} and \eqref{eq:T-AIEphi}. To prove this result, we assume weaker assumptions on the (nonlinear) time-dependent perturbations because this is not needed for the proof.

\begin{theorem} \label{thm:one-to-one T-DDE AIE}
Consider \eqref{eq:T-DDEphi} with $L \in C(\mathbb{R},\mathcal{L}(X,\mathbb{R}^n))$ and $G \in C(\mathbb{R} \times X, \mathbb{R}^n)$.
\end{theorem}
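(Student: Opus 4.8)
The full statement to be proved is the two-sided correspondence: with $B$ and $R$ as in \eqref{eq:B(t)} and \eqref{eq: R}, a function $y$ is a solution of \eqref{eq:T-DDEphi} on $[s-h,t_\varphi)$ with $y_s=\varphi$ if and only if $u(t):=y_t$ is a solution of \eqref{eq:T-AIEphi} on $[s,t_\varphi)$ with $u(s)=\varphi$. The plan is to prove both implications by first stripping off the nonlinearity, reducing to a linear inhomogeneous problem, and then settling that problem with the explicit sun-star structure of the shift semigroup.

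First I would freeze the nonlinearity. If $y$ solves \eqref{eq:T-DDEphi}, put $g(\tau):=G(\tau,y_\tau)$; if $u$ solves \eqref{eq:T-AIEphi}, put $f(\tau):=R(\tau,u(\tau))=g(\tau)r^{\odot\star}$. Since $G\in C(\mathbb R\times X,\mathbb R^n)$ and $\tau\mapsto y_\tau$ (resp.\ $\tau\mapsto u(\tau)$) is continuous with values in $X$, the maps $g$ and $f$ are continuous, so the weak$^\star$ integrals occurring below are well defined by \Cref{lemma:wk*integral Usunstar}. On the abstract side, \Cref{prop:equi nonlinear}---whose proof uses only continuity of $R(\cdot,u(\cdot))$ and therefore remains valid under the present weaker hypotheses on $L$ and $G$---lets me replace \eqref{eq:T-AIEphi} by \eqref{eq:T-AIE2}, i.e.\ by \eqref{eq:T-LAIEf2} with $f=g r^{\odot\star}$. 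It thus suffices to prove that $y$ solves the classical linear inhomogeneous delay equation $\dot y(t)=L(t)y_t+g(t)$ on $[s,t_\varphi)$ with $y_s=\varphi$ (and $g$ continuous) if and only if $u=y_\cdot$ solves \eqref{eq:T-LAIEf2} with $f=g r^{\odot\star}$.

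For this linear step I would work directly with the concrete duality recalled in \Cref{subsec: classical DDEs}: $X^{\odot \star}\cong\mathbb R^n\times L^\infty([-h,0],\mathbb R^n)$, $j\varphi=(\varphi(0),\varphi)$, and the known (shifted translation) action of $T_0^{\odot \star}$. Given a classical solution $y$, integrating the delay equation yields $y(t)=\varphi(0)+\int_s^t[L(\tau)y_\tau+g(\tau)]\,d\tau$, and for $\theta\in[-h,0]$ one has $y_t(\theta)=\varphi(t+\theta-s)=(T_0(t-s)\varphi)(\theta)$ when $t+\theta\le s$ and $y_t(\theta)=\varphi(0)+\int_s^{t+\theta}[L(\tau)y_\tau+g(\tau)]\,d\tau$ otherwise. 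The term to understand is the weak$^\star$ Riemann integral, and the key identity to isolate is that for continuous $h:[s,t]\to\mathbb R^n$,
\begin{equation*}
\Bigl(j^{-1}\!\int_s^t T_0^{\odot \star}(t-\tau)\,h(\tau)r^{\odot\star}\,d\tau\Bigr)(\theta)=\int_s^{\max\{t+\theta,\,s\}} h(\tau)\,d\tau,\qquad\theta\in[-h,0],
\end{equation*}
which is verified by pairing with an arbitrary $x^\odot=(\rho,\chi)\in\mathbb R^{n\star}\times L^1$ and inserting the explicit formula for $T_0^{\odot \star}$, exactly as in the autonomous treatment of \cite[Chapter III]{Diekmann1995}. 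Applying the identity with $h=L(\cdot)y_\cdot+g(\cdot)$ converts the integrated delay equation into \eqref{eq:T-LAIEf2}. Conversely, starting from a solution $u$ of \eqref{eq:T-LAIEf2}, I would set $y(t):=u(t)(0)$ for $t\ge s$ and $y:=\varphi(\cdot-s)$ on $[s-h,s]$, use the same identity (read in reverse) together with the shift structure to establish the translation property $u(t)(\theta)=y(t+\theta)$, hence $u(t)=y_t$; then $y(t)=\varphi(0)+\int_s^t[L(\tau)y_\tau+g(\tau)]\,d\tau$ has a continuous integrand, so $y$ is $C^1$ on $[s,t_\varphi)$ and differentiation recovers the delay equation.

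The main obstacle is precisely this explicit evaluation of the weak$^\star$ integral against $T_0^{\odot \star}$ together with its companion translation property: these are careful but essentially routine sun-star computations, the only delicate points being the measure-theoretic handling of the $L^\infty$-component under the weak$^\star$ pairing and the standard localisation needed to pass from compact to non-compact subintervals when \Cref{prop:AIEf} is invoked (handled as in \cite[Theorem 22]{Janssens2020}). A slightly more abstract alternative, which I would also flag, is to bypass the direct computation by first identifying, under $j$, the classical equation $\dot y(t)=L(t)y_t+g(t)$ with the abstract ODE \eqref{eq:T-LAODEf2} for $f=g r^{\odot\star}$, and then chaining \Cref{prop:AIE1 ODE1}, \Cref{prop:T-f 2} and \Cref{prop:AIEf}; this reduces the delay-equation-specific input to the homogeneous correspondence $U(t,s)\varphi=y_t$ already recorded from \cite[Theorem 3.1]{Diekmann1995}, at the cost of a density argument in the initial datum $\varphi$.
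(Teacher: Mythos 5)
Your proposal is correct and follows essentially the same route as the paper: reduce to the linear inhomogeneous equation with the frozen nonlinearity $f=R(\cdot,u(\cdot))$, pass between the $T_0$-based and $U$-based variation-of-constants formulas via \Cref{prop:AIEf} (the paper invokes it directly where you invoke \Cref{prop:equi nonlinear}, whose proof is that same proposition), and settle the delay-specific part with the explicit evaluation identity for $j^{-1}\int_s^t T_0^{\odot\star}(t-\tau)h(\tau)r^{\odot\star}\,d\tau$, which the paper cites as \cite[Lemma XII.3.3]{Diekmann1995} rather than re-deriving. The translation property $u(t)(\theta)=y(t+\theta)$ and the method-of-steps argument for $C^1$ regularity in the converse direction also match the paper's proof.
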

\begin{enumerate}
    \item Suppose that $y : [s-h,t_\varphi) \to \mathbb{R}^n$ is a solution of \eqref{eq:T-DDEphi}, then the function $u_\varphi : [s,t_\varphi) \to X$ defined by
    \begin{equation*}
        u_\varphi(t) := y_t, \quad \forall t \in [s,t_\varphi),
    \end{equation*}
    is a solution of \eqref{eq:T-AIEphi}.
    \item Suppose that $u_\varphi : [s,t_\varphi) \to X$ is a solution of \eqref{eq:T-AIEphi}, then the function $y : [s-h,t_\varphi) \to \mathbb{R}^n$ defined by
    \begin{equation*}
        y(t):=
        \begin{cases}
        \varphi(t-s), \quad &s-h \leq t \leq s,\\
        u_\varphi(t)(0), \quad &s \leq t \leq t_\varphi,
        \end{cases}
    \end{equation*}
    is a solution of \eqref{eq:T-DDEphi}.
\end{enumerate}
\begin{proof} Before we start proving the first assertion, notice that the differential equation from \eqref{eq:T-DDEphi} is equivalent to the integral equation
\begin{equation} \label{eq:integratedform}
    y(t) = \varphi(0) + \int_s^t L(\tau)y_\tau + G(\tau,y_\tau) d\tau, \quad t \geq s,
\end{equation}
due to the fundamental theorem of calculus. Let us start with proving the first assertion.

1. Notice that the right-hand side of the abstract integral equation in \eqref{eq:T-LAIEf2} with a $C^k$-smooth function $f = R(\cdot,u_\varphi(\cdot))$ is equivalent to
\begin{equation*}
    T_0(t-s)\varphi + j^{-1} \int_s^t T_0^{\odot \star}(t-\tau) [L(\tau)u_\varphi(\tau) + G(\tau,u_\varphi(\tau))]r^{\odot \star}  d\tau, \quad \forall t \in [s,t_\varphi).
\end{equation*}
It then follows from the action of the shift semigroup \eqref{eq:shift}, the assumption $u_\varphi(t) = y_t$ and \cite[Lemma XII.3.3]{Diekmann1995} where in this lemma the map $g$ must be replaced by the continuous map $L(\cdot)u_\varphi(\cdot) + G(\cdot,u_\varphi(\cdot)))$, since $L \in C(\mathbb{R},\mathcal{L}(X,\mathbb{R}^n))$, $ u_\varphi \in C([s,t_\varphi),X)$ and $G \in C(\mathbb{R} \times X, \mathbb{R}^n)$, that this right-hand side evaluated at $\theta \in [-h,0]$ is equivalent to
\begin{align*}
    &(T_0(t-s)\varphi)(\theta) + j^{-1} \bigg(\int_s^t T_0^{\odot \star}(t-\tau) [L(\tau)u_\varphi(\tau) + G(\tau,u_\varphi(\tau))]r^{\odot \star} d\tau\bigg)(\theta) \\
    &= (T_0(t-s)\varphi)(\theta) + \int_s^{\max \{s,t+\theta\}} L(\tau)u_\varphi(\tau) + G(\tau,u_\varphi(\tau)) d\tau \\
    &= (T_0(t-s)\varphi)(\theta) + \int_s^{\max \{s,t+\theta\}} L(\tau)y_\tau + G(\tau,y_\tau) d\tau \\
    &= \begin{dcases}
    \varphi(t+\theta), \quad &s-h \leq t+\theta \leq s, \\
    \varphi(0) + \int_s^{t+\theta} L(\tau)y_\tau + G(\tau,y_\tau) d\tau, \quad & s \leq t + \theta \leq t_\varphi, 
    \end{dcases} \\
    &= y(t+\theta) = u_\varphi(t)(\theta),
\end{align*}
where the fourth equality holds due to \eqref{eq:integratedform}. Hence, $u_\varphi$ is a solution to \eqref{eq:T-LAIEf2} with $f = R(\cdot,u_\varphi(\cdot))$. It follows from \Cref{prop:AIEf} that $u_\varphi$ then also is a solution of \eqref{eq:T-LAIEf} with $f = R(\cdot,u_\varphi(\cdot))$, which is equivalent to saying that $u_\varphi$ is a solution of \eqref{eq:T-AIEphi}. \\

2. Let us first prove that the function $y$ is continuous on $[s-h,t_\varphi).$ As $\varphi \in X$, it is clear that $y$ is continuous for $t \in [s-h,s]$. As point evaluation acts continuously on elements in $X \ni u_\varphi(t)$, it follows that $y$ is continuous on $[s,t_\varphi)$. Since $u_\varphi(s)(0) = \varphi(0)$ we have that $y \in C([s-h,t_\varphi),\mathbb{R}^n)$.

Our next goal is to show that $y$ satisfies \eqref{eq:T-DDEphi} or equivalently \eqref{eq:integratedform}. Because $u_\varphi$ is a solution of \eqref{eq:T-LAIEf} with $f = R(\cdot,u_\varphi(\cdot))$, we know from \Cref{prop:AIEf} that $u_\varphi$ is then also a solution of \eqref{eq:T-LAIEf2} with $f = R(\cdot,u_\varphi(\cdot))$. It follows from \eqref{eq:shift} and \cite[Lemma XII.3.3]{Diekmann1995} that
\begin{align*}
    y(t) &= u_\varphi(t)(0) \\
    &=(T_0(t-s)\varphi)(0) + j^{-1} \bigg(\int_s^t T_0^{\odot \star}(t-\tau) [L(\tau)u_\varphi(\tau) + G(\tau,u_\varphi(\tau))]r^{\odot \star} d\tau\bigg)(0)\\
    &= \varphi(0) + \int_s^t L(\tau)u_\varphi(\tau) + G(\tau,u_\varphi(\tau)) d\tau.
\end{align*}
It remains to show that $u_\varphi(\tau) = y_\tau$ for all $\tau \in [s,t_\varphi)$. Because, then we have shown that $y$ indeed satisfies \eqref{eq:integratedform}. Let $\theta \in [-h,0]$ be given. If $\tau + \theta \in [s-h,s]$ then we have that
\begin{equation*}
    y_\tau(\theta) = y(\tau + \theta) = \varphi(\tau + \theta - s) = (T_0(\tau - s)\varphi)(\theta) = u_\varphi(\tau)(\theta),
\end{equation*}
due to \eqref{eq:shift}. When $\tau + \theta \in [s,t_\varphi)$, it again follows from \eqref{eq:shift} and \cite[Lemma XII.3.3]{Diekmann1995} that
\begin{align*}
    y_\tau(\theta) &= y(\tau + \theta)\\
    &= u_\varphi(\tau + \theta)(0)\\
    &= (T_0(\tau + \theta -s)\varphi)(0) \\
    &+ j^{-1} \bigg(\int_s^{\tau + \theta} T_0^{\odot \star}(\tau + \theta - \sigma) [L(\sigma)u_\varphi(\sigma) + G(\sigma,u_\varphi(\sigma))]r^{\odot \star} d\sigma\bigg)(0)\\
    &= \varphi(0) + \int_0^{\tau + \theta} L(\sigma)u_\varphi(\sigma) + G(\sigma,u_\varphi(\sigma)) d\sigma\\
    &= (T_0(\tau -s)\varphi)(\theta) + j^{-1} \bigg(\int_s^{\tau} T_0^{\odot \star}(\tau - \sigma) [L(\sigma)u_\varphi(\sigma) + G(\sigma,u_\varphi(\sigma))]r^{\odot \star} d\sigma\bigg)(\theta)\\
    &=u_\varphi(\tau)(\theta),
\end{align*}
and so $y_\tau = u_\varphi(\tau)$ for all $\tau \in [s,t_\varphi)$. To conclude,
\begin{equation*}
    y(t) = \varphi(0) + \int_s^t L(\tau)y_\tau + G(\tau,y_\tau) d\tau,
\end{equation*}
and so $y$ satisfies the differential equation of \eqref{eq:T-DDEphi}. By the history property, and the fact that $\varphi \in X$, it follows by the method of steps applied to \eqref{eq:integratedform} that $y \in C^1([s,t_\varphi),\mathbb{R}^n)$. This shows that $y$ indeed is a solution to \eqref{eq:T-DDEphi}.
\end{proof}

\bibliographystyle{siamplain}
\bibliography{references}

\end{sloppypar}
\end{document}